\def\C{\mathcal{C}}
\def\N{\mathbb{N}}
\def\T{\mathcal{T}}
\def\cross{\times}
\def\e{\epsilon}
\def\PMF{\mathcal{PMF}}
\def\CS{\mathcal{C}_{q}}
\def\MCG{\mathrm{MCG}}
\def\diam{\text{diam}}
\renewcommand{\ge}{\geqslant}
\renewcommand{\le}{\leqslant}
\newcommand{\norm}[1]{|#1|}
\newtheorem{theorem}{Theorem}[section]
\newtheorem{lemma}[theorem]{Lemma}
\newtheorem{corollary}[theorem]{Corollary}
\newtheorem{proposition}[theorem]{Proposition}
\theoremstyle{definition}
\newtheorem{remark}[theorem]{Remark}
\newtheorem{definition}[theorem]{Definition}
\begin{document}


\title{Quotients of the curve complex.}
\author{Joseph Maher, Hidetoshi Masai, Saul Schleimer}
\date{\today}

\maketitle

\begin{abstract}
We consider three kinds of quotients of the curve complex which are obtained by coning off uniformly quasi-convex subspaces:
symmetric curve sets, non-maximal train track sets, and compression body disc sets.
We show that the actions of the mapping class group on those quotients are strongly WPD, 
which implies that the actions are non-elementary and those quotients are of infinite diameter.

Subject code: 37E30, 20F65, 57M50.
\end{abstract}

\tableofcontents

\section{Introduction}

If $X$ is a Gromov hyperbolic space, and $Y =\{ Y_i \}$ is a
collection uniformly quasiconvex subsets of $X$, then the
electrification $X_Y$ is Gromov hyperbolic.  Furthermore, if a group
$G$ acts on $X$ by isometries, and $Y$ is $G$-equivariant, then $G$
also acts on the electrification $X_Y$ by isometries.

However, if the action of $G$ on $X$ has some useful property
(e.g. non-elementary, acylindrical, WPD), then it is not known whether
the corresponding action on $X_Y$ has this property.  In particular,
in general, $X_Y$ need not be infinite diameter.

We shall consider the action of the mapping class group of a finite
type orientable surface on the curve complex of the surface $\C(S)$.
Our results apply to the following families of uniformly quasiconvex
subsets.

The symmetric curve sets: If $q\colon S \to S'$ is an orbifold
quotient of $S$, then we shall denote by $C_q(S)$ the collection of
all $q$-equivariant curves in $S$.  The collection $\{ \C_q(S) \}$ as
$q$ runs over all orbifold quotients of $S$ is a mapping class group
invariant, uniformly quasiconvex family of subsets of $\C(S)$
\cite{RS}.  We shall denote the electrification by $\C_q(S)$.

Non-maximal train track sets: A train track $\tau$ on $S$ is
\emph{maximal} if every complementary region is a triangle or a
monogon containing a single puncture.  Given a train track $\tau$, we
shall write $\C(\tau)$ for all simple closed curves carried by $\tau$.
The set $\{ \C(\tau) \}$ as $\tau$ runs over all non-maximal train
tracks in $S$ is a mapping class group invariant, uniformly
quasiconvex family of subsets of $\C(S)$ \cite{masur-minsky}.  We
shall denote the electrification by $\C_\tau(S)$.

Compression body disc sets: Let $V$ be a compression body with
boundary $S$, and let $D(V)$ be the collection of isotopy classes of
essential simple closed curves in $S$ which bound discs in $V$.  The
set $\{ D(V) \}$, as $V$ runs over all compression bodies, forms a
mapping class group invariant, uniformly quasiconvex family of subsets
of $\C(S)$ \cite{masur-minsky}.  We shall write $\C_D(S)$ for the
electrification, which is quasi-isometric to the compression body
graph.

We shall show:

\begin{theorem}\label{thm.main}
Both $\C_q(S), \C_\tau(S)$ and $\C_D(S)$ are infinite diameter Gromov
hyperbolic spaces, and furthermore, the action of the mapping class
group on these spaces is strongly WPD.
\end{theorem}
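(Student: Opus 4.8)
The plan is to prove the three assertions in turn, with the bulk of the work in the strongly WPD statement, which (unlike the mere existence of a single WPD loxodromic) is a \emph{universal} property: we must verify the defining finiteness condition for \emph{every} element that acts loxodromically on the electrification. Throughout write $\widehat X$ for any one of $\C_q(S)$, $\C_\tau(S)$, $\C_D(S)$, write $\{Y_i\}$ for the corresponding uniformly quasiconvex $\MCG(S)$-invariant family being coned off, and let $\pi \colon \C(S) \to \widehat X$ be the natural coarsely Lipschitz projection. Gromov hyperbolicity of $\widehat X$ is immediate from the cited electrification results \cite{RS}, \cite{masur-minsky}, since coning off a uniformly quasiconvex family of a hyperbolic space yields a hyperbolic space. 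It therefore remains to treat infinite diameter and strong WPD.

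First I would classify the loxodromic elements. Since $\pi$ is coarsely Lipschitz, any element with bounded $\C(S)$-orbit has bounded $\widehat X$-orbit; as finite-order and reducible mapping classes act elliptically on $\C(S)$, every element loxodromic on $\widehat X$ is pseudo-Anosov. Moreover a pseudo-Anosov $g$ is loxodromic on $\widehat X$ precisely when its stable and unstable laminations persist as distinct points of $\bdy \widehat X$, i.e. when neither lies in the limit set of the coned family $\{Y_i\}$. For each of the three families the non-persistent laminations form a proper subset of the space of ending laminations, so generic pseudo-Anosovs are loxodromic on $\widehat X$; exhibiting two with independent axes produces a loxodromic isometry and hence infinite diameter (and non-elementarity). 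This also fixes, for the strongly WPD statement, the class of elements $g$ we must handle.

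The heart of the argument is a transfer of the WPD condition from $\C(S)$ to $\widehat X$, and I expect this to be the main obstacle. Fix such a loxodromic $g$, a basepoint $x$, and $\epsilon > 0$, and consider $F_N = \{ h \in \MCG(S) : \widehat{d}(x, hx) \le \epsilon,\ \widehat{d}(g^N x, h g^N x) \le \epsilon \}$; the goal is $\norm{F_N} < \infty$ for $N$ large. By hyperbolicity of $\widehat X$, each $h \in F_N$ coarsely stabilizes the $\widehat X$-axis segment from $x$ to $g^N x$ with bounded phase shift, so after discarding finitely many powers $g^m$ we may assume $h$ coarsely fixes the whole segment in $\widehat d$. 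Now lift this segment to $\C(S)$: replacing each cone-jump by a $\C(S)$-geodesic through the relevant $Y_i$ yields an unparametrized quasigeodesic along the $\C(S)$-axis of $g$, whose electric length is coarsely the number of distinct cone sets crossed. Since $g$ is loxodromic on $\widehat X$, this segment crosses of order $N$ distinct cone sets, separated by definite-length cone-free ``real'' subsegments. The key structural input is a \emph{geometric separation} property of $\{Y_i\}$: distinct cone sets are coarsely far apart in $\C(S)$ along long overlaps, so an element that $\widehat d$-coarsely fixes the lifted segment must setwise fix one of the crossed $Y_i$ and must coarsely fix, in the genuine metric $\dc{\cdot}$, the adjacent real subsegments. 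One then feeds this long $\C(S)$-fixed segment into the acylindricity of the $\MCG(S)$-action on $\C(S)$ (due to Bowditch) to conclude that $h$ ranges over a finite set, whence $F_N$ is finite.

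The family-specific work is exactly the separation estimate together with the control of cone-set stabilizers, supplied in each case by the cited quasiconvexity results and an understanding of $\stab(Y_i)$: for symmetric curve sets $\stab(\C_q(S))$ is the normalizer of the finite symmetry group of $q$, and distinct orbifold quotients yield strongly separated sets \cite{RS}; for non-maximal train track sets and compression body disc sets the analogous separation and stabilizer statements come from \cite{masur-minsky}. The delicate point throughout --- and the reason the naive transfer fails --- is the mismatch between the electric metric $\widehat d$ and the genuine metric $\dc{\cdot}$: a priori $h$ could $\widehat d$-coarsely fix many points while being a wild mapping class, by routing short paths through the cone sets. Ruling this out is precisely what the geometric separation of $\{Y_i\}$ and the persistence of the laminations of $g$ (the loxodromic-on-$\widehat X$ hypothesis, guaranteeing order-$N$ genuinely distinct crossings) accomplish, and establishing these two inputs in each of the three settings is where the real difficulty lies.
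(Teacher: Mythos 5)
Your skeleton matches the paper's transfer mechanism (Theorem \ref{thm.WPD}): show that any $h$ almost-fixing $x$ and $g^Nx$ in the electric metric must coarsely fix a long subsegment of the axis in the \emph{genuine} metric $d_{\C(S)}$, then invoke properness of the $\MCG(S)$-action on $\C(S)$ (you use Bowditch acylindricity where the paper uses Bestvina--Fujiwara WPD of $g$; that substitution is harmless). But the input you feed into this machine is wrong. Your ``geometric separation'' property --- distinct cone sets are coarsely far apart along long overlaps --- is not proved in \cite{RS} or \cite{masur-minsky}, and it is in fact \emph{false} for two of the three families: if a train track $\tau'$ is carried by $\tau$ then $\C(\tau') \subseteq \C(\tau)$, and if a compression body $W$ is contained in $V$ then $D(W) \subseteq D(V)$, so distinct cone sets can overlap along unboundedly long (indeed infinite) segments; translates of a single symmetric subspace can likewise fellow-travel, with Masai's Lemma \ref{lem.finiteparallel} bounding only the \emph{number} of parallel copies, not the overlap length. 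What the paper actually establishes is the weaker, axis-relative condition (2) of Theorem \ref{thm.WPD}: for each loxodromic $g$ with axis $\alpha$, the nearest-point projections $\pi_\alpha(Y_i)$ are uniformly bounded --- and this requires three genuinely different arguments (Lemma \ref{lem.finiteparallel} plus Corollary \ref{cor.asym} for symmetric subspaces; Lemma \ref{lemma:maximal}, proved via Rauzy induction on non-classical interval exchanges, showing widely separated axis curves are $K$-maximally filling and hence not both carried by any non-maximal track; and the stability theorem \ref{theorem:stability} of Maher--Schleimer for disc sets). Relatedly, your step ``$h$ must setwise fix one of the crossed $Y_i$'' has no justification: $h$ may permute cone sets, and nothing in your setup forces exact stabilization, nor is stabilizer structure (normalizers of symmetry groups, etc.) needed or used anywhere in a working proof.

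The second gap is the existence of loxodromics, which you dismiss with ``the non-persistent laminations form a proper subset, so generic pseudo-Anosovs are loxodromic.'' For $\C_q(S)$ this is exactly the hard point: the symmetric subspaces are coarsely dense in $\C(S)$ and symmetric laminations are dense in $\EL(S)$, so properness of the bad set in $\partial\C(S)$ is not obvious, and indeed the claim is \emph{false} for the exceptional surfaces $S_{0,4}, S_{1,1}, S_{1,2}, S_{2,0}$, where every pseudo-Anosov is symmetric. Your argument never invokes the hypothesis $2g+n>4$, so it would ``prove'' a false statement; the paper instead needs Proposition \ref{prop.asymmetric pA}, resting on the Teichm\"uller-space dimension count of Proposition \ref{prop:exceptional} (Earle--Kra rigidity plus an Euler-characteristic case analysis of orbifold quotients). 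Your ``precisely when neither invariant lamination lies in the limit set of the coned family'' characterization of loxodromy has the same hole in the other direction: one must rule out the axis fellow-traveling infinitely many \emph{distinct} cone sets with bounded electric diameter, which is again the content of the family-specific lemmas (e.g.\ Theorem \ref{theorem:stability} for disc sets). In short, the reduction is sound but both pillars --- the separation/bounded-projection input and the existence of asymmetric (resp.\ maximal-lamination) pseudo-Anosovs --- are asserted rather than proved, and the first is false as stated.
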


See Sections \ref{section:co-symmetric}, \ref{section:non-maximal} and
\ref{section:compression body} for more detailed statements.  In the
case of $\C_\tau(S)$ this is a special case of a much more general
result of Hamenst\"adt, which has not yet appeared in print.  In the
case of $\C_D(S)$, the compression body graph was previously shown to
have infinite diameter in \cite{maher-schleimer}.

For simplicity in this introduction {\em we only consider random walks
  on the mapping class group $\MCG(S)$ whose transition probability
  has the support generating the whole $\MCG(S)$}.  In \cite{MT},
Maher-Tiozzo proved that in our situation, any random walk gives rise
to a loxodoromic element with asymptotic probability one.  Hence in
particular, the following two previously known results are now
immediate corollaries of this results.  Furthermore, this gives
largely independent proofs of these results.

\begin{theorem}\cite{Mas}
Let $S$ be a surface of finite type of gnerus $g$ with $n$ punctures,
and with $2g + n > 4$. Then a random walk on the mapping class group
of $S$ gives rise to an asymmetric pseudo-Anosov element with
asymptotic probability one.
\end{theorem}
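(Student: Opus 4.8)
The plan is to obtain this as a direct corollary of the $\C_q(S)$ case of Theorem~\ref{thm.main} together with the Maher--Tiozzo theorem \cite{MT}. The one substantive step is a reduction lemma: \emph{every element of $\MCG(S)$ that acts loxodromically on the electrification $\C_q(S)$ is an asymmetric pseudo-Anosov.} Granting this, the argument is short. By Theorem~\ref{thm.main} the action of $\MCG(S)$ on $\C_q(S)$ is strongly WPD, hence non-elementary, and $\C_q(S)$ is a separable Gromov hyperbolic space of infinite diameter. Since the support of the walk generates all of $\MCG(S)$, which therefore acts non-elementarily, \cite{MT} asserts that the $n$-th step $w_n$ acts loxodromically on $\C_q(S)$ with probability tending to $1$; the reduction lemma then upgrades this to say that $w_n$ is an asymmetric pseudo-Anosov with asymptotic probability one. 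Note that only the inclusion ``loxodromic $\Rightarrow$ asymmetric pseudo-Anosov'' is needed, not its converse.

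To prove the reduction lemma I would argue in two halves. First, the coning map $\C(S) \to \C_q(S)$ is distance non-increasing, so an element with unbounded orbit in $\C_q(S)$ also has unbounded orbit in the curve complex $\C(S)$; by Masur--Minsky \cite{masur-minsky} a mapping class has unbounded orbit in $\C(S)$ if and only if it is pseudo-Anosov, so a loxodromic $\phi$ on $\C_q(S)$ is pseudo-Anosov. Second, suppose toward a contradiction that such a $\phi$ is symmetric, i.e. commutes with a nontrivial finite-order mapping class $\sigma$. Realising $\langle \sigma \rangle$ by homeomorphisms and letting $q \from S \to S / \langle \sigma \rangle$ be the associated orbifold quotient, the relation $\sigma \phi = \phi \sigma$ forces $\phi$ to send $\sigma$-invariant (equivalently $q$-equivariant) curves to $\sigma$-invariant curves, so $\phi$ preserves $\C_q(S)$ setwise. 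But $\C_q(S)$ is coned to a single vertex in the electrification, so $\phi$ fixes that vertex and has bounded orbit, contradicting loxodromicity. Hence $\phi$ has no nontrivial finite symmetry, that is, it is asymmetric.

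The hypothesis $2g + n > 4$ is what guarantees that $S$ is complex enough for the family of symmetric curve sets to be nontrivial and for the action on $\C_q(S)$ to be non-elementary, so that Theorem~\ref{thm.main} and \cite{MT} both apply. I expect the main obstacle to be the reduction lemma, and specifically matching its conclusion to the precise definition of ``asymmetric'' used in \cite{Mas}: one must check that ``preserving no symmetric curve set $\C_q(S)$'' really is equivalent to the absence of a nontrivial finite symmetry commuting with (or normalising) $\phi$, and that the $\sigma$-invariant curves genuinely form a $\phi$-invariant subset of the vertices coned off in $\C_q(S)$. A secondary, more routine point is to confirm that $\C_q(S)$ satisfies the separability and non-elementarity hypotheses required by the Maher--Tiozzo framework.
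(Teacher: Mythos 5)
Your overall route is the same as the paper's: obtain the result as an immediate corollary of Theorem \ref{thm.main} (the action of $\MCG(S)$ on $\CS(S)$ is strongly WPD, hence non-elementary, on an infinite-diameter hyperbolic space), the Maher--Tiozzo theorem \cite{MT} producing loxodromic elements with asymptotic probability one, and the characterization of loxodromics on $\CS(S)$ (Theorem \ref{thm.cosymmetric}(a)), of which indeed only the direction ``loxodromic $\Rightarrow$ asymmetric pseudo-Anosov'' is needed. Your first half of the reduction lemma (loxodromic on $\CS(S)$ forces an unbounded, indeed loxodromic, orbit on $\C(S)$, hence pseudo-Anosov by \cite{masur-minsky}) is exactly the paper's argument.

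The gap is in the second half. You take ``symmetric'' to mean ``commutes with a nontrivial finite-order mapping class $\sigma$'' and run the argument through the regular quotient $S \to S/\langle \sigma \rangle$. But the notion of symmetric used in \cite{Mas} and in this paper is strictly more general: $\phi$ is symmetric if it is a lift of some map with respect to some finite orbifold cover $p \colon S \to S'$, and this cover \emph{need not be regular} --- the remark immediately following the theorem statement stresses precisely that this is ``a more general condition than requiring some power of the pseudo-Anosov element commute with a finite order element.'' So your argument only rules out symmetries arising from regular covers and proves a weaker conclusion than the theorem asserts; the concern you flag at the end of your proposal (whether ``no finite symmetry'' matches the definition in \cite{Mas}) is exactly where the proof fails, since the two conditions are not equivalent. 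The fix is to drop $\sigma$ altogether and argue from the definition: if $\phi$ is a lift of $\phi'$ with respect to \emph{any} finite covering $p \colon S \to S'$, then $p \circ \phi = \phi' \circ p$, so $\phi$ permutes components of preimages of curves in $S'$ and hence preserves the symmetric subspace $\Pi(\mathcal{C}(S'))$ setwise; that subspace is coned off in $\CS(S)$, so $\phi$ has bounded orbit there and cannot be loxodromic. This is how the paper concludes part (a) of Theorem \ref{thm.cosymmetric}, and it handles irregular covers at no extra cost.
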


We remark that here we say a pseudo-Anosov map on $S$ is
\emph{asymmetric} if it is not a lift of a pseudo-Anosov map on a
surface $S'$ which admits an orbifold cover $p \colon S \to S'$.  This
cover need not be regular, so this is a more general condition than
requiring some power of the pseudo-Anosov element commute with a
finite order element.  The condition $2g + n > 4$ includes all
surfaces which support a pseudo-Anosov map, but excludes the four
exceptional surfaces $S_{0, 4}, S_{1, 1}, S_{1, 2}$ and $S_{2, 0}$.
The result does not hold for these surfaces, as all pseudo-Anosov maps
on these surfaces are symmetric with respect to the hyperelliptic
involutions on the surfaces.

We say a pseudo-Anosov map has \emph{generic singularities} if its
invariant foliations have $1$-prong singularities at each puncture,
and all other singularities are trivalent.

\begin{theorem}\cite{gadre-maher}\label{theorem:gadre-maher}
Let $S$ be a surface of finite type of genus $g$ with $n$ punctures,
and with $2g + n > 3$.  Then a random walk on the mapping class group
of $S$ gives rise to a pseudo-Anosov element whose invariant
foliations have generic singularities with asymptotic probability one.
\end{theorem}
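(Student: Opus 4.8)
The plan is to deduce Theorem~\ref{theorem:gadre-maher} from the non-maximal train track case of Theorem~\ref{thm.main}, combined with the loxodromic limit law of Maher--Tiozzo. By Theorem~\ref{thm.main} (see Section~\ref{section:non-maximal} for the statement in this range), whenever $2g+n>3$ the electrified complex $\C_\tau(S)$ is an infinite diameter Gromov hyperbolic space on which $\MCG(S)$ acts strongly WPD, and in particular non-elementarily. Since any random walk whose support generates $\MCG(S)$ then satisfies the hypotheses of \cite{MT}, a sample path produces an element $f$ acting loxodromically on $\C_\tau(S)$ with asymptotic probability one. This is the sole probabilistic input; the remainder of the argument is deterministic.

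First I would record that being loxodromic on $\C_\tau(S)$ already forces $f$ to be pseudo-Anosov. The electrification map $\C(S)\to\C_\tau(S)$ is distance non-increasing, so the stable translation length of $f$ on $\C(S)$ is at least its positive translation length on $\C_\tau(S)$; hence $f$ is loxodromic on the curve complex itself, and by Masur--Minsky \cite{masur-minsky} this is equivalent to $f$ being pseudo-Anosov. It therefore suffices to prove the deterministic implication: a pseudo-Anosov $f$ that is loxodromic on $\C_\tau(S)$ has invariant foliations with generic singularities.

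I would prove the contrapositive. Suppose the invariant foliations have a non-generic singularity. The stable and unstable foliations $\mathcal{F}^{-},\mathcal{F}^{+}$ carry the same singularity data, so each has a complementary region that is not a triangle or a once-punctured monogon. One may therefore choose a train track $\tau_{+}$ fully carrying $\mathcal{F}^{+}$, with positive weight on every branch and with the complementary region at the non-generic singularity not a triangle or once-punctured monogon; such a $\tau_{+}$ is non-maximal, and symmetrically one chooses a non-maximal $\tau_{-}$ fully carrying $\mathcal{F}^{-}$. Because $\mathcal{F}^{+}$ lies in the interior of the projectivised cone of weights carried by $\tau_{+}$, an entire neighbourhood of $[\mathcal{F}^{+}]$ in $\PMF$ is carried by $\tau_{+}$, and likewise for $\mathcal{F}^{-}$.

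Finally I would invoke the north--south dynamics of $f$ on $\PMF$. For any base curve $\alpha$ the iterates $f^n(\alpha)$ converge to $[\mathcal{F}^{+}]$ as $n\to+\infty$, so for all large $n$ the curve $f^n(\alpha)$ is carried by $\tau_{+}$ and hence lies in $\C(\tau_{+})$, a set of uniformly bounded diameter in $\C_\tau(S)$ since it has been coned off; symmetrically $f^n(\alpha)\in\C(\tau_{-})$ for all large negative $n$. Only finitely many intermediate iterates remain, and consecutive iterates differ by $f$ and so are at uniformly bounded $\C_\tau(S)$-distance. Thus the whole orbit $\{\,f^n(\alpha):n\in\Z\,\}$ has bounded diameter in $\C_\tau(S)$, contradicting loxodromicity; this establishes the contrapositive and hence the theorem. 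The main obstacle is the geometric dictionary isolated in the previous paragraph: that the singularity data of a filling foliation detects the non-maximality of any train track fully carrying it, so that a non-generic filling foliation is necessarily swallowed by one of the coned-off sets $\C(\tau)$. Everything else is soft hyperbolic dynamics.
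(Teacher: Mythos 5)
Your overall route is the same as the paper's: Theorem \ref{thm.main} for $\C_\tau(S)$ plus the Maher--Tiozzo limit law \cite{MT} reduces everything to the deterministic statement that an element acting loxodromically on $\C_\tau(S)$ is a pseudo-Anosov with generic singularities, which is part (a) of the theorem in Section \ref{section:non-maximal}. Your reduction, and your observation that loxodromic on $\C_\tau(S)$ forces loxodromic on $\C(S)$ and hence pseudo-Anosov, are fine. The gap is in your proof of the contrapositive, at the sentence asserting that, because $\mathcal{F}^{+}$ has positive weight on every branch of $\tau_{+}$, ``an entire neighbourhood of $[\mathcal{F}^{+}]$ in $\PMF$ is carried by $\tau_{+}$.'' Positivity of weights places $[\mathcal{F}^{+}]$ in the \emph{relative} interior of the polytope $\overline{P(\tau_{+})}$, i.e.\ its interior inside its own linear span; this gives a neighbourhood in $\PMF(S)$ only when $\overline{P(\tau_{+})}$ is full-dimensional, which essentially happens only for maximal tracks. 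For a connected, trivalent, recurrent, filling track with $b$ branches and $s$ switches, the cone of carried measures has dimension $b-s$ (or $b-s+1$ if the track is orientable), and comparing the cusp count of the complementary regions with the Euler characteristic shows that this is strictly less than $\dim\MF(S)=6g-6+2n$ for every non-maximal track as soon as $2g+n>3$; the only exceptional surface is $S_{1,1}$, precisely the one on which Theorem \ref{theorem:gadre-maher} fails (Remark \ref{remark:stratum}). So the very non-maximality of $\tau_{+}$ that you need forces $\overline{P(\tau_{+})}$ to be nowhere dense in $\PMF(S)$, and convergence $f^{n}(\alpha)\to[\mathcal{F}^{+}]$ in $\PMF(S)$ does not imply that $f^{n}(\alpha)$ is ever carried by $\tau_{+}$. (This codimension phenomenon is also why the nesting lemma, Lemma \ref{lemma:nest}, is stated only for maximal tracks.)

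The step can be repaired, and the repair is closer to what the paper leaves implicit (compare the corresponding sentence in the proof of Theorem \ref{thm.cosymmetric}: a symmetric pseudo-Anosov fixes a symmetric subspace and therefore cannot be loxodromic). Instead of pushing an arbitrary curve into $\C(\tau_{+})$, use the standard fact \cite{penner-harer} that a pseudo-Anosov $f$ admits an \emph{invariant} train track $\tau$ with $f(\tau)\prec\tau$, fully carrying $\mathcal{F}^{+}$, whose complementary regions match the singularity data of $\mathcal{F}^{+}$; such a $\tau$ is non-maximal when some singularity is non-generic. Now take any curve $a$ carried by $\tau$, for instance a vertex cycle. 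For all $n\ge 0$ the curve $f^{n}(a)$ is carried by $f^{n}(\tau)\prec\tau$, so the whole forward orbit lies in $\C(\tau)$, a set of diameter at most one in $\C_\tau(S)$. Hence $d_{\C_\tau(S)}(a,f^{n}(a))\le 1$ for all $n\ge 0$, so $n\mapsto f^{n}(a)$ is not a quasi-isometric embedding and $f$ is not loxodromic on $\C_\tau(S)$. This requires no neighbourhoods in $\PMF(S)$ and no two-sided argument; with this substitution the rest of your proposal goes through.
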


The condition $2g+n > 3$ includes all finite type surfaces which
support a pseudo-Anosov map with the exception of $S_{1, 1}$ for which
the result does not hold, see Remark \ref{remark:stratum}.

Furthermore, our result also has an application to the Poisson
boundary.  In \cite{KM}, it is proved that the space of projective
measured foliations $\PMF$ has a unique harmonic measure $\nu$, and
the measure space $(\PMF,\nu)$ is a Poisson boundary of the random
walk on $\MCG(S)$.  Maher-Tiozzo \cite{MT} proved that if a group acts
acylindrically on a Gromov hyperbolic space, then the Gromov boundary
equipped with the hitting (harmonic) measure is a Poisson boundary.
By the work of Osin \cite{Osin}, any WPD action has a quotient with
acylindrical action.  Combining these with a characterization of the
boundary of quotient (Proposition \ref{prop.boundary} below) Theorem
\ref{thm.main} gives the following.

\begin{corollary}
Let $S$ be a surface of finite type of genus $g$ with $n$ punctures,
and with $2g + n > 4$.  Let $\nu$ be the harmonic measure arising from
a random walk on $\MCG(S)$.  Then the harmonic measure $\nu$ on
$\PMF(S)$ is concentrated on asymmetric and maximal foliations.
\end{corollary}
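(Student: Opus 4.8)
The plan is to run a uniqueness-of-Poisson-boundary argument once for each of the two quotients $\C_q(S)$ and $\C_\tau(S)$ and then intersect the two resulting full-measure sets. Recall from \cite{KM} that $(\PMF,\nu)$ is a model for the Poisson boundary of the random walk, with $\nu$ supported on the minimal filling foliations, which we identify with the Gromov boundary $\EL(S) = \partial\C(S)$ via Klarreich. Thus $\nu$ pushes forward to the harmonic (hitting) measure $\bar\nu$ for the action of $\MCG(S)$ on $\C(S)$, and it suffices to show that $\bar\nu$ is concentrated on the foliations that are simultaneously asymmetric and maximal.

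Fix one of the quotients, say $Q = \C_q(S)$. By Theorem \ref{thm.main} the action of $\MCG(S)$ on $Q$ is strongly WPD, hence non-elementary with $Q$ of infinite diameter. By \cite{Osin} we may pass to an acylindrical action, to which \cite{MT} applies, realizing the hitting measure $\nu_Q$ on $\partial Q$ as a second model of the Poisson boundary. Proposition \ref{prop.boundary} identifies $\partial Q$, $\MCG(S)$-equivariantly and as a measure space, with the set of asymmetric foliations sitting inside $\EL(S) = \partial\C(S)$: coning off the symmetric curve sets $\{\C_q(S)\}$ deletes exactly those boundary points that are limits of some symmetric family, namely the symmetric minimal filling foliations. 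The essential point is that this identification is compatible with boundary convergence: the natural $1$-Lipschitz map $\C(S) \to Q$ sends a sample path converging in $\C(S)$ to an asymmetric foliation to a path converging in $Q$ to the corresponding point of $\partial Q$, whereas a path converging in $\C(S)$ to a symmetric foliation remains within bounded distance of the relevant coned-off set and so does \emph{not} converge to any point of $\partial Q$.

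Now combine the two inputs. Since the action on $Q$ is non-elementary with $Q$ of infinite diameter, \cite{MT} guarantees that almost every sample path converges to a genuine point of $\partial Q$; by the compatibility above this forces the $\C(S)$-limit of almost every sample path to be an asymmetric foliation, so $\bar\nu$, and therefore $\nu$, is concentrated on asymmetric foliations. Repeating the argument verbatim with $Q = \C_\tau(S)$ and the non-maximal train track form of Proposition \ref{prop.boundary} shows that $\nu$ is concentrated on maximal foliations, the non-maximal foliations being precisely the boundary points killed by coning off the non-maximal train track sets; intersecting the two full-measure sets yields the corollary. The main obstacle is the compatibility step: one must verify that the abstract isomorphism of Poisson boundaries supplied by \cite{KM} and \cite{MT} is realized by the concrete boundary map of Proposition \ref{prop.boundary}, equivalently that a walk transient in $\C(S)$ toward a non-surviving (symmetric or non-maximal) foliation fails to be transient in the quotient. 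This is exactly where the infinite-diameter and non-elementarity conclusions of Theorem \ref{thm.main}, together with the uniform quasiconvexity of the coned-off families, are needed.
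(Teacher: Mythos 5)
Your proposal reconstructs exactly the paper's intended route: the paper's own justification of this corollary is precisely the combination of Kaimanovich--Masur's identification of $(\PMF,\nu)$ with the Poisson boundary, the strong WPD statement of Theorem \ref{thm.main}, Osin's passage from a WPD action to an acylindrical one, Maher--Tiozzo's theorem for the resulting action, and the boundary containment of Proposition \ref{prop.boundary}, run once for $\CS(S)$ (asymmetric) and once for $\C_\tau(S)$ (maximal), intersecting the two full-measure sets. You have also correctly isolated where the real work lies, namely that the abstract Poisson-boundary isomorphisms must be implemented by sample-path convergence; the paper glosses over this entirely.

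However, your justification of that compatibility step rests on a false claim: it is not true that a path converging in $\C(S)$ to a point $\lambda\in\partial Y_i$ ``remains within bounded distance of the relevant coned-off set.'' Convergence to $\lambda$ is a condition on Gromov products, not on distance to $Y_i$: already in the hyperbolic plane with $Y_i$ a geodesic line, the points obtained by moving distance $n$ perpendicularly off $Y_i$ at the point $Y_i(n^2)$ converge to an endpoint of $Y_i$ while their distance to $Y_i$ tends to infinity. What quasiconvexity gives (and what the proof of Proposition \ref{prop.boundary} actually uses) is that a quasi-geodesic \emph{ray} with endpoint in $\partial Y_i$ eventually stays in a bounded neighbourhood of $Y_i$; this does not transfer verbatim to the sample path. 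The gap is repairable with tools you already invoke. Almost every sample path $(w_n x)$ converges both in $\C(S)$ (to $\lambda(\omega)$, matching the $\PMF$-limit via Klarreich) and in the quotient $Z$ to a point of $\partial Z$, the latter by Maher--Tiozzo applied to the non-elementary action on $Z$ coming from Theorem \ref{thm.main}. If $\lambda(\omega)$ lay in some $\partial Y_i$, then for $n,m$ large the $\C(S)$-geodesics $[w_nx, w_mx]$ would pass uniformly close to $Y_i$ (thin triangles plus quasiconvexity), hence their images in $Z$ would pass uniformly close to the cone point over $Y_i$ and so uniformly close to the basepoint in $Z$; since images of $\C(S)$-geodesics are uniform unparametrized quasigeodesics in $Z$ (Kapovich--Rafi), the Gromov products $(w_nx \mid w_mx)_x$ computed in $Z$ would stay bounded, contradicting convergence to a point of $\partial Z$. (Alternatively, under a finite first moment assumption, combine Maher--Tiozzo's sublinear geodesic tracking in $\C(S)$ with positive linear drift in $Z$.) Once this is in place, your two remaining over-claims become unnecessary: you only need the containment $\partial Z\subseteq\partial\C(S)\setminus\partial Y$ of Proposition \ref{prop.boundary}, not an identification, and you never need the unproved assertion that convergence in $\C(S)$ to an asymmetric limit forces convergence in the quotient.
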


\section{Quotients of hyperbolic graphs}

Given a metric space $(X, d)$, and a collection of subsets
$Y = \{ Y_i \}_{i \in I} $, the \emph{electrification} of $X$ with
respect to $Y$ is the metric space $X_Y$ obtained by adding a new
vertex $y_i$ for each set $Y_i$, and coning off $Y_i$ by attaching
edges of length $\tfrac{1}{2}$ from each $y \in Y_i$ to $y_i$.  The
metric on $X_Y$ is the induced path metric, and so the image of each
set $Y_i$ in $X_Y$ has diameter at most one.

We say the family of sets $Y$ is \emph{$G$-equivariant} if for all
$g \in G$ and each $Y_i \in Y$ then $g Y_i$ is also in $Y$.


\begin{theorem} \label{prop.hyperbolic} Let $X$ be a Gromov
hyperbolic space and let $Y = \{ Y_i\}_{i \in I}$ be a collection
of uniformly quasi-convex subsets of $X$.  Then $X_Y$, the
electrification of $X$ with respect to $Y$, is Gromov hyperbolic.

Furthermore, if a group $G$ acts on $X$ by isometries, and $Y$ is
$G$-equivariant, then $G$ also acts on $X_Y$ by isometries.
%
\end{theorem}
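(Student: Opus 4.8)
The plan is to prove the two assertions in order: first that the electrification $X_Y$ is Gromov hyperbolic, and then the easier equivariance statement.

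\medskip

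\textbf{Hyperbolicity.} My strategy is to compare geodesics (or quasi-geodesics) in $X_Y$ with paths in $X$, and to show that the thin triangles condition transfers across the electrification. The key tool is the uniform quasi-convexity hypothesis: there is a constant $K$ so that every $Y_i$ is $K$-quasi-convex in $X$. I would first establish a \emph{de-electrification} lemma: given a path $\gamma$ in $X_Y$, one obtains a path $\widehat\gamma$ in $X$ by replacing each length-one ``hop'' through a cone point $y_i$ (an edge into $y_i$ followed by an edge out) with a geodesic segment in $X$ between the two entry/exit points of $Y_i$. Because each $Y_i$ is $K$-quasi-convex, such a replacement segment stays within bounded distance of $Y_i$, and one can control lengths and the Hausdorff distance between $\gamma$ and $\widehat\gamma$ in a way that depends only on $K$ and the hyperbolicity constant $\delta$ of $X$. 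The main estimate I expect to need is that an $X_Y$-geodesic, after de-electrification, is a uniform \emph{quasi-geodesic} in $X$; this is the heart of the matter and is where uniform quasi-convexity is essential, since without a uniform $K$ the distortion constants would blow up.

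\medskip

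Given that estimate, I would then prove thin triangles in $X_Y$. Take a geodesic triangle in $X_Y$ with sides $a,b,c$; de-electrify each side to get quasi-geodesics $\widehat a,\widehat b,\widehat c$ in $X$. By the Morse stability lemma (stability of quasi-geodesics in the hyperbolic space $X$), these track genuine geodesics in $X$ up to a bounded Hausdorff distance, and the resulting geodesic triangle in $X$ is $\delta$-thin. Pushing this thinness back through the (coarsely Lipschitz) electrification map $X \to X_Y$ yields a uniform thinness constant $\delta'$ for the original triangle in $X_Y$. The bookkeeping—tracking a point on one side of the triangle through de-electrification, across the thin triangle in $X$, and back through electrification to the appropriate side in $X_Y$—is routine but is the step most prone to constant-chasing errors. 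I would set up all constants as explicit functions of $\delta$ and $K$ at the start to keep this manageable.

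\medskip

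\textbf{Equivariance.} This part is essentially formal. If $G$ acts on $X$ by isometries and the family $Y$ is $G$-equivariant, then for each $g \in G$ the assignment $y_i \mapsto y_{g(i)}$ (where $g Y_i = Y_{g(i)}$) extends the action to the cone points. One checks that $g$ sends each coned-off edge of length $\tfrac12$ from $y \in Y_i$ to $y_i$ isometrically onto the edge from $g y \in g Y_i$ to $y_{g(i)}$, so $g$ preserves the defining edge-length data and hence the induced path metric on $X_Y$. Verifying that this gives a genuine $G$-action (compatibility with composition and that each element is an isometry of the path metric) is immediate from the construction, so I expect no obstacle here; the only subtlety is that the map on cone points is well defined, which follows directly from $G$-equivariance of $Y$.

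\medskip

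The hard part will be the de-electrification quasi-geodesic estimate: controlling how much a geodesic of $X_Y$ can ``cheat'' by diving into cone points, and showing the resulting $X$-path cannot backtrack or lengthen uncontrollably. Everything else reduces to standard hyperbolic-space stability arguments once that estimate is in hand.
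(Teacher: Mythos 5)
Your equivariance argument is fine, and it coincides with what the paper does (the paper calls that part ``immediate''). Note, however, that the paper does not prove the hyperbolicity statement at all: it quotes it from Bowditch (Proposition 7.12 of his relative hyperbolicity paper) and Kapovich--Rafi (Corollary 2.4). So your proposal is an attempt at a self-contained proof, and unfortunately its central estimate is false.

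The de-electrification claim --- that an $X_Y$-geodesic, after replacing each hop through a cone point by an $X$-geodesic between the entry and exit points, is a uniform quasi-geodesic in $X$ --- fails, because a geodesic in $X_Y$ controls only \emph{how many} hops it makes, not \emph{where} it enters and exits the sets $Y_i$. Concretely, let $X = \mathbb{H}^2$ and let $Y_1, Y_2$ be two geodesic lines asymptotic to the same point $\xi$ at infinity; each is convex, so the family is uniformly quasi-convex. Take $p \in Y_1$ and $q \in Y_2$ in the region where the two lines are far apart, say $d_X(p,q) = D \gg 1$. Then $d_{X_Y}(p,q)$ is roughly $3$, realized by: hop along $Y_1$ to a point $z_1$ far enough toward $\xi$ that $d_X(z_1, Y_2) \le 1$, cross to $z_2 \in Y_2$, hop to $q$. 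The point $z_1$ may be taken \emph{arbitrarily} far toward $\xi$ without changing the $X_Y$-length of this path, so among the $X_Y$-geodesics from $p$ to $q$ there are ones whose de-electrifications have length $2T$ with $T$ arbitrarily large, while $d_X(p,q) = D$ stays fixed. These de-electrified paths are not quasi-geodesics in $X$ for any uniform constants, and --- worse for your second step --- they are not contained in any bounded neighbourhood of the $X$-geodesic $[p,q]$, so the appeal to Morse stability and thin triangles in $X$ collapses as well. This is precisely the known subtlety of electrification: pulling geodesics back from $X_Y$ to $X$ is the badly behaved direction.

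A workable direct proof goes the opposite way: push $X$-geodesics forward. For $x, y \in X$, let $L(x,y) \subset X_Y$ be the image of an $X$-geodesic $[x,y]$ under the $1$-Lipschitz inclusion $X \to X_Y$, and verify the ``guessing geodesics'' criterion of Bowditch/Masur--Minsky for the family $\{L(x,y)\}$. Thinness of the triangles formed by the $L$'s follows from $\delta$-thinness in $X$ plus the Lipschitz property; and if $d_{X_Y}(x,y) \le 1$ then either $d_X(x,y) \le 1$, or $x$ and $y$ lie in a common $Y_i$, in which case $K$-quasi-convexity puts $[x,y]$ in the $K$-neighbourhood of $Y_i$, so $L(x,y)$ has diameter at most $2K + O(1)$ in $X_Y$. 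That bounded-diameter step is where uniform quasi-convexity genuinely enters, and the criterion then gives hyperbolicity of $X_Y$ with constants depending only on $\delta$ and $K$. If you insist on de-electrifying instead, you must work with preferred geodesics (entry and exit points chosen, say, by nearest-point projection to the $Y_i$), and then you still owe an argument that handles arbitrary geodesic triangles of $X_Y$, which is exactly the difficulty your proposal elides.
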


The first statements are due to Bowditch
\cite{bowditch-rel-hyp}*{Proposition 7.12}, and Kapovich and Rafi
\cite{kapovich-rafi}*{Corollary 2.4}.  The second statement is
immediate.

\subsection{WPD action on quotients}

We recall the definition of weakly properly discontinuous action
defined by Bestvina-Fujiwara \cite{bestvina-fujiwara}.
\begin{definition}
Let $X$ be a connected metric graph and $G$ a group of isometries of
$X$.  An element $g\in G$ acts {\em loxodromically}, or is {\em
  loxodromic} if the map $\mathbb{Z}\rightarrow X$ given by
$n\mapsto g^{n}x$ is a quasi-isometric embedding for any $x\in X$.  A
loxodromic element $g\in G$ is said to act {\em weakly properly
  discontinuously(WPD)} or is {\em WPD} if for any $r>0$ and $x\in X$,
there exists $N\in\mathbb{N}$ such that
$$\sharp\left\{h\in G\mid d(x,hx)<r \text{ and } d(g^{N}x, hg^{N}x)<r \right\}<\infty.$$

We say that the action of $G$ on $X$ is {\em WPD} if
\begin{itemize}
\item $G$ is not virtually cyclic,
\item there is at least one loxodromic element in $G$ which is WPD.
\end{itemize}
\end{definition}

We say that $G$ is \emph{strongly WPD} if every loxodromic element is
WPD.

Note that if there is a loxodromic element acting on a graph, then the graph must have infinite diameter.
Moreover, Bestvina-Fujiwara observed the following.
\begin{proposition}[{\cite[Proposition 6]{bestvina-fujiwara}}]\label{prop.non-elementary}
Suppose the action of $G$ on a Gromov hyperbolic space $X$ is WPD, then
the action of $G$ on $X$ is non-elementary.
\end{proposition}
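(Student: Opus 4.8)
The plan is to argue via the classification of isometric actions on Gromov hyperbolic spaces, using the WPD element to eliminate the elementary cases. Recall that the action is \emph{non-elementary} precisely when $G$ fixes neither a point nor an unordered pair of points in the Gromov boundary $\partial X$; equivalently, $G$ contains two independent loxodromic elements. Since the WPD hypothesis supplies a loxodromic element $g$, the action is neither bounded nor parabolic, so it remains to rule out the two elementary possibilities: that $G$ fixes a pair $\{p,q\}\subset\partial X$ (the lineal case), and that $G$ fixes a single point $\xi\in\partial X$ (the focal case). Throughout, write $g^{+},g^{-}\in\partial X$ for the attracting and repelling fixed points of $g$, let $A$ be a quasi-axis for $g$, and set
$$E(g)=\{h\in G: h\{g^{+},g^{-}\}=\{g^{+},g^{-}\}\}$$
for the setwise stabiliser of this pair.

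The technical core is the following lemma, which is where the WPD property of $g$ first enters: \emph{$E(g)$ is virtually cyclic}. To prove it I would pass to the index at most two subgroup $E^{+}(g)$ fixing $g^{+}$ and $g^{-}$ separately. Each $h\in E^{+}(g)$ translates $A$ coarsely, and standard hyperbolic geometry assigns it a signed coarse translation length $\tau(h)\in\R$, well defined up to bounded error. Elements with $\tau(h)$ near $0$ move every point of a long initial segment of $A$ by a uniformly bounded amount, and in particular move both $x$ and $g^{N}x$ by less than $r$ for a suitable choice of $x$, $r$, and $N$; so by the WPD condition there are only finitely many of them. This simultaneously bounds the ``kernel'' of $\tau$ and forces the image of $\tau$ to be discrete, since an accumulation of translation lengths at $0$ would again produce infinitely many near-trivial elements. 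Hence $E^{+}(g)$, and therefore $E(g)$, is virtually cyclic.

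Granting the lemma, the lineal case is immediate: if $G$ fixes a pair $\{p,q\}$, then the only boundary fixed points of the loxodromic $g$ force $\{p,q\}=\{g^{+},g^{-}\}$, so $G\subseteq E(g)$ is virtually cyclic, contradicting the standing hypothesis that $G$ is not virtually cyclic. The same lemma, combined with that hypothesis, yields an element $f\in G\setminus E(g)$; then $g_{1}:=fgf^{-1}$ is loxodromic with fixed pair $\{fg^{+},fg^{-}\}\neq\{g^{+},g^{-}\}$. If these two pairs are disjoint, the north--south dynamics of high powers of $g$ and $g_{1}$ let me run the standard ping-pong argument, producing two independent loxodromics and hence a non-elementary action.

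The remaining configuration is the focal case, where the two pairs share exactly one point; I expect this to be the main obstacle, since here ping-pong fails (all loxodromics share the global fixed point $\xi$, so one can never produce four distinct endpoints) and the WPD hypothesis must be invoked a second time. In this case $\xi=g^{+}=g_{1}^{+}$ while $g^{-}\neq g_{1}^{-}$, and the quasi-axes $A$ and $fA$, having the common endpoint $\xi$, fellow-travel along an infinite sub-ray toward $\xi$. My plan is to consider the conjugates $w_{n}:=g^{-n}g_{1}g^{n}$: these are pairwise distinct, using that $g\notin E(g_{1})$ because $g$ fixes $\xi$ but pushes $g_{1}^{-}$ toward $\xi$, and their quasi-axes $g^{-n}(fA)$ converge to $A$ as $n\to\infty$. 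Consequently, for a fixed basepoint $x$ on $A$ and \emph{any} fixed $N$, each $w_{n}$ with $n$ large moves both $x$ and $g^{N}x$ by at most the translation length of $g_{1}$ plus a bounded hyperbolicity error. Choosing $r$ larger than this bound yields infinitely many elements $h=w_{n}$ with $d(x,hx)<r$ and $d(g^{N}x,hg^{N}x)<r$, directly contradicting the WPD property of $g$. This rules out the focal case, so the action fixes neither a point nor a pair at infinity and is therefore non-elementary.
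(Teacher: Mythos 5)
Your proof is correct, but there is nothing in the paper itself to compare it against: the paper states this proposition without proof, quoting it from Bestvina--Fujiwara \cite{bestvina-fujiwara}*{Proposition 6}. Your argument is essentially a reconstruction of that cited proof --- the WPD condition forces the stabiliser $E(g)$ of the fixed pair to be virtually cyclic, so the hypothesis that $G$ is not virtually cyclic yields a conjugate $g_1 = fgf^{-1}$ with a different fixed pair; the configuration where the two pairs share exactly one endpoint is then ruled out by feeding the infinitely many distinct elements $g^{-n}g_1g^{n}$, which move $x$ and $g^N x$ a uniformly bounded amount independent of $N$, back into the WPD condition, leaving the disjoint case in which $g$ and $g_1$ are independent loxodromics.
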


We now show that under certain conditions, if $g$ acts WPD on a Gromov
hyperbolic metric graph, then $g$ is also WPD on the quotient graph we
get by coning off.

\begin{theorem}\label{thm.WPD}
Let the group $G$ act by isometries on the hyperbolic metric space
$X$, and let $Y$ be a family of uniformly quasiconvex $G$-equivariant
subsets of $X$.

Let $g\in G$ act loxodromically on $X$ and $\alpha$ be a
quasi-geodesic axis of $g$.  We denote by $\pi_{\alpha}$ the nearest
projection map to $\alpha$ on $X$.  Suppose
\begin{enumerate}
\item[(1)] the action of $g$ on $X$ is WPD, and
\item[(2)] there is a constant $C>0$ such that for every element $Y_{i}\in Y$, 
$$\diam_{X}(\pi_{\alpha}(Y_{i}))<C.$$
\end{enumerate}
Then the action of $g$ on $X_Y$ is also WPD.
\end{theorem}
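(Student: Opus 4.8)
The plan is to transport the WPD property from $X$ to $X_Y$ by controlling how the electrification distorts distances \emph{in the direction of the axis} $\alpha$. Hypothesis (2) is precisely the assertion that coning off the $Y_i$ produces no useful shortcut along $\alpha$, and the whole argument is organized around one estimate making this quantitative. First I would prove a key lemma: nearest-point projection extends to a coarsely Lipschitz map for the electrified metric, i.e.\ there is a constant $K$ with
$$d_X(\pi_\alpha(a), \pi_\alpha(b)) \le K\,(d_{X_Y}(a,b) + 1) \qquad \text{for all } a, b \in X.$$
To see this, take a geodesic in $X_Y$ from $a$ to $b$ and cut it into genuine edges of $X$ and excursions through cone vertices. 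Each excursion enters and leaves some $Y_i$ at points $p,q$, so by (2) its endpoints satisfy $d_X(\pi_\alpha(p),\pi_\alpha(q)) < C$; each genuine edge moves the projection by at most a constant, since nearest-point projection to a quasigeodesic in a hyperbolic space is coarsely Lipschitz. Summing over the $O(d_{X_Y}(a,b))$ pieces gives the bound.

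Since electrification only adds edges, $d_{X_Y} \le d_X$, so the key lemma immediately yields \textbf{Step 1} (loxodromicity on $X_Y$): because $\pi_\alpha(g^n x)$ marches along $\alpha$ at a linear rate, $d_{X_Y}(x, g^n x) \ge \tfrac{1}{K}\,d_X(\pi_\alpha(x),\pi_\alpha(g^n x)) - 1 \gtrsim |n|$, while the easy upper bound is $d_{X_Y}(x, g^n x) \le d_X(x, g^n x) \lesssim |n|$. Hence $n \mapsto g^n x$ is a quasi-isometric embedding, $g$ is loxodromic on $X_Y$, and $\alpha$ remains a quasigeodesic axis there.

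The main work, and the \textbf{main obstacle}, is \textbf{Step 2}: showing the $X_Y$-displacement condition forces the relevant $h$ into a set already known to be finite in $X$. The difficulty is that $d_{X_Y}(x,hx) < r$ gives \emph{no} bound on $d_X(x,hx)$, so one cannot pull the condition back directly. I would circumvent this using the axis. Normalize $x = \alpha(0)$, let $\tau>0$ be the translation length of $g$, and suppose $d_{X_Y}(x,hx)<r$ and $d_{X_Y}(g^N x, h g^N x)<r$. The key lemma shows the endpoints $hx = h\alpha(0)$ and $h g^N x \approx h\alpha(N\tau)$ of the quasigeodesic $h\alpha$ project near $\alpha(0)$ and near $\alpha(N\tau)$ respectively. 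By stability of quasigeodesics, a quasigeodesic whose endpoints project far apart on $\alpha$ fellow-travels $\alpha$ over the intervening segment; thus $h\alpha$ fellow-travels $\alpha$ \emph{in $X$} over essentially all of $[\alpha(0),\alpha(N\tau)]$. Because $h$ is an isometry, $h\alpha$ is the same quasigeodesic as $\alpha$ up to that isometry, so the projection-induced reparametrization is a coarse isometry fixing the endpoints, forcing $d_X(h\alpha(t),\alpha(t)) \le D$ for a uniform $D$ and all $t$ in a subinterval $[\alpha(c), \alpha(N\tau - c)]$. Specializing to $t = k\tau$ gives $d_X(h g^k x, g^k x) \le D$ for all integers $k$ in a range of length $\sim N$. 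This coarse-monotonicity/fellow-traveling step is where hypothesis (2) and the hyperbolicity of $X$ do the real work, and getting the constants uniform (so that the range of valid $k$ genuinely grows with $N$) is the delicate part.

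Finally I would feed this into WPD of $g$ on $X$. Setting $z = g^{k_0} x$ with $k_0 = \lceil c/\tau\rceil$ and $M = N - 2k_0$, Step 2 gives $d_X(z, hz) \le D$ and $d_X(g^M z, h g^M z) \le D$. By hyperbolicity, coarsely fixing both endpoints of the long segment $[z, g^M z]$ within $D$ coarsely fixes every intermediate point $g^{N'} z$ (with a slightly larger constant $D'$); so, choosing the threshold $r' = D'+1$ and the integer $N'$ supplied by WPD of $g$ on $X$ for this $r'$, and taking $N$ large enough that $M \ge N'$, every such $h$ lies in $\{\,h : d_X(z,hz) < r',\ d_X(g^{N'}z, h g^{N'}z) < r'\,\}$. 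That set is finite, which bounds the number of candidate $h$ and completes the proof.
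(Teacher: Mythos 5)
Your proposal is correct and follows essentially the same route as the paper's proof: your key lemma is exactly the paper's observation that, by hypothesis (2), the projection to $\alpha$ of the set of points within electrified distance $r$ of a basepoint has diameter $O(r)$, and your Steps 2--3 mirror the paper's argument that small $X_Y$-displacement at $x$ and at $g^{N}x$ forces $h$ to coarsely fix, with constants independent of $N$, a long subsegment of $\alpha$, after which WPD of $g$ on $X$ finishes the proof. The only cosmetic differences are that you compare $h\alpha$ with $\alpha$ where the paper compares geodesics between the two electrified balls, and that you verify loxodromicity of $g$ on $X_Y$ explicitly where the paper only records that $\alpha$ has infinite diameter in $X_Y$.
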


\begin{proof}
For notational simplicity let $Z:= X_{Y}$.  We first observe that
$\alpha$ has infinite diameter in $Z$.  Suppose, not then there is a
$D$ such that any pair of points $x$ and $y$ in $\alpha$ are distance
at most $D$ apart in $Z$.  Then there is a path consisting of $D$
segments in the $Y_i$ connecting $x$ to $y$.  By assumption (2), the
nearest point projection of each path to $\alpha$ in $X$ has diameter
at most $C$, the distance between $x$ and $y$ in $X$ is at most $CD$,
a contradiction.

Let $z$ be a point in $X$, and hence also a point in $Z$, and let
$B_{Z}(z,r)$ denote the ball of radius $r$ centered at $z$ in $Z$.  We
denote by $\widehat B_{Z}(z,r)$ the restriction of $B_{Z}(z,r)$ to
$X$.  Then by assumption (2), we see that
$\diam_{X} (\pi_{\alpha}(\widehat B(z,r)) < C r$.  Recall that in a
Gromov hyperbolic space, if the nearest projections to a
quasi-geodesic $\alpha$ of given two points $x,y$ are reasonably far
apart, then the geodesic connecting $x$ and $y$ must have a subarc
fellow traveling with $\alpha$.  Therefore for large enough $N$, there
is a subarc $\beta\subset\alpha$ and $D_{1}>0$ such that any geodesic
in $X$ connecting a point in $\widehat B_{Z}(z,r)$ and a point in
$\widehat B_{Z}(g^{N}z,r)$ must contain $\beta$ in its
$D_{1}$-neighbourhood.  Note that we may suppose $\beta$ as long as we
need by taking large enough $N$.
Let $R:=d_{X}(z,\alpha)$.  Suppose there is $h\in G$ such that
$d_{Z}(z,hz)<r$ and $d_{Z}(g^{N}z,hg^{N}z)<r$.  Then any geodesic
$\gamma$ in $X$ connecting $hz$ and $hg^{N}z$ contains $\beta$ in its
$D_{1}$-neighbourhood.  This implies that there exists a constant
$D_{2} > C r$ such that for any point $b\in\beta$ at least $R+D_{2}$
apart from the endpoints, we have $d_{X}(b, hb)\leq R+2D_{1}+D_{2}$.
Note that the constant $R+2D_{1}+D_{2}$ is independent of $N$.
Therefore by taking $N$ large enough, we may apply the fact that $g$
acts WPD on $X$ to conclude that $g$ acts WPD on $Z$.
\end{proof}

\subsection{The Gromov boundary of quotients} \label{section:boundary}

In this subsection, we discuss the boundary of the quotient graph we
obtain by coning off.  We first recall the work of Dowdall-Taylor.

Let $X$ be a Gromov hyperbolic graph and $Z$ a quotient graph obtained
by coning off a family $Y=\{Y_{i}\}_{i \in I}$ of quasi-convex subsets
of $X$.  Recall that the Gromov boundary of a Gromov hyperbolic space
$X$ is defined as the space of equivalence classes of quasi-geodesic
rays.  Given a quasi-geodesic ray $\gamma$ in $X$, let
$\gamma(\infty)\in\partial X$ denote the corresponding point in the
Gromov boundary $\partial X$.

Let $\mathrm{proj} \colon X \rightarrow Z$ denote the map given by the
inclusion $X\subset Z$.  To give a description of the Gromov boundary
of $Z$, Dowdall-Taylor defines a space $\partial_{Z}X$ by
$$\{\gamma(\infty)\in\partial X \mid\gamma\colon  \mathbb{R}_{+} \rightarrow X 
\text{ is a quasi-geodesic ray with } \mathrm{diam}_{Z} (\mathrm{proj}(\mathrm{Im}\gamma))=\infty\}.$$

Then the work of Dowdall-Taylor which we need here can be stated as
follows.

\begin{proposition}[see the proof of {\cite[Theorem
  4.8]{dowdall-taylor}}]\label{prop.DT}
Let $X$ be a Gromov hyperbolic space, and let $Y = \{ Y_i\}_{i \in I}$
be a collection of uniformly quasi-convex subsets of $X$.  Let $Z$ be
the space formed by coning off $X$ with respect to $Y$.  Then the
Gromov boundary of $Z$ is homeomorphic to $\partial_{Z}(X)$.
\end{proposition}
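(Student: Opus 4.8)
The plan is to construct an explicit homeomorphism $\Phi\colon \partial_Z(X) \to \partial Z$ and to control it using the correspondence between geodesics in $X$ and in $Z$ supplied by the work of Kapovich--Rafi \cite{kapovich-rafi}. The two facts I would extract from that machinery are: (i) the projection $\proj(\gamma)$ of any quasi-geodesic $\gamma$ in $X$ is an unparametrised quasi-geodesic in $Z$, with uniform constants; and (ii) conversely, any geodesic in $Z$ admits a \emph{de-electrification} --- replace each cone traversal through some $y_i$ by a geodesic in $X$ between the corresponding entry and exit points of $Y_i$ --- producing an unparametrised quasi-geodesic in $X$ whose projection fellow-travels the original $Z$-geodesic. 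Since $X$ and $Z$ are hyperbolic, the Morse lemma makes both operations well defined up to bounded Hausdorff distance. With these in hand, for $\xi = \gamma(\infty) \in \partial_Z(X)$ the image $\proj(\gamma)$ is an unparametrised quasi-geodesic of infinite $Z$-diameter, hence converges to a well-defined point of $\partial Z$, and I set $\Phi(\xi) := [\proj(\gamma)]$.

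First I would check that $\Phi$ is well defined. Boundary-equivalent rays in $X$ lie at bounded $X$-Hausdorff distance, hence at bounded $Z$-Hausdorff distance, so the condition $\diam_Z(\proj(\gamma)) = \infty$ depends only on $\xi$ and not on the chosen representative; the same estimate shows $\Phi(\xi)$ is representative-independent. For injectivity, suppose $\xi \neq \xi'$ are distinct points of $\partial_Z(X)$ and join them by a bi-infinite geodesic $\ell$ in $X$. If $\Phi(\xi) = \Phi(\xi')$, then the two projected half-rays converge to the same point of $\partial Z$, forcing $\proj(\ell)$ to have bounded $Z$-diameter; but each half-ray already has infinite $Z$-diameter because $\xi, \xi' \in \partial_Z(X)$, a contradiction.

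For surjectivity I would take a point $\eta \in \partial Z$, represented by a $Z$-geodesic ray $\sigma$, and de-electrify it to a quasi-geodesic ray $\gamma$ in $X$. Since $\sigma$ is infinite in $Z$ and $\proj(\gamma)$ fellow-travels $\sigma$, the ray $\gamma$ has infinite $Z$-projection-diameter, so $\gamma(\infty) \in \partial_Z(X)$ and $\Phi(\gamma(\infty)) = \eta$. The points requiring care are that the de-electrified concatenation of $X$-geodesic segments really is a single quasi-geodesic ray (uniform quasi-convexity of the $Y_i$ is what prevents backtracking) and that it escapes to infinity in $X$ rather than accumulating on a cone locus.

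Finally, continuity in both directions is where I expect the real work, and it is where the defining condition of $\partial_Z(X)$ earns its keep. For continuity of $\Phi$, if $\xi_n \to \xi$ in $\partial X$ with all points in $\partial_Z(X)$, then representative $X$-rays from a basepoint fellow-travel for $X$-time $T_n \to \infty$; projecting, the common initial segments $\proj(\gamma|_{[0,T_n]})$ exhaust $\proj(\gamma)$, and since $\diam_Z(\proj(\gamma)) = \infty$ their $Z$-diameters tend to infinity, forcing $\gp{\Phi(\xi_n)}{\Phi(\xi)}_Z \to \infty$. For continuity of $\Phi^{-1}$, a large Gromov product $\gp{\eta_n}{\eta}_Z$ means the $Z$-rays share a long common prefix in $Z$; de-electrifying and invoking the Morse lemma, the corresponding $X$-quasi-geodesics share a coarsely identical prefix whose $X$-length is at least its $Z$-length and hence tends to infinity, giving $\gp{\xi_n}{\xi}_X \to \infty$. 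The main obstacle throughout is setting up the de-electrification correspondence with uniform constants and tracking it to the boundary; granting the Kapovich--Rafi estimates, the two continuity arguments above, each pivoting on the infinite-$Z$-diameter hypothesis, are what upgrade $\Phi$ from a bijection to a homeomorphism.
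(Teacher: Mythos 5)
First, a point of comparison: the paper does not prove this proposition at all --- the bracketed citation \emph{is} the proof, with the result imported wholesale from Dowdall--Taylor --- so your attempt is really being measured against their argument. Your architecture (the inclusion-induced map $\Phi$, then well-definedness, injectivity, surjectivity, continuity both ways) matches theirs, and the half of your machinery labelled (i), that projections of $X$-geodesics are uniform unparametrised quasi-geodesics in $Z$, is indeed the correct Kapovich--Rafi input; well-definedness, injectivity, and continuity of $\Phi$ can be run essentially as you describe.

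The genuine gap is your fact (ii). Segment-by-segment de-electrification of a $Z$-geodesic is \emph{not} an unparametrised quasi-geodesic in $X$ with uniform constants, and contrary to your parenthetical remark, uniform quasi-convexity of the $Y_i$ does not prevent backtracking; what would prevent it is a bound on the coarse intersection of distinct members of $Y$ (a malnormality-type hypothesis), which is not assumed here. Concretely, let $X$ be a tree containing a segment $Y_1 = [p,q]$ of length $R$, let $u \in Y_1$ be the point with $d_X(p,u)=2$, attach a vertex $w$ to $u$ and a vertex $v$ to $q$ by edges of length $1$, and set $Y_2 = [v,w]$, which runs back along $Y_1$ from $q$ to $u$. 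Both sets are convex, hence uniformly quasi-convex. Writing $c_i$ for the cone point over $Y_i$, the path $p \to c_1 \to q \to c_2 \to w$ has $Z$-length $2 = d_Z(p,w)$, so it is a $Z$-geodesic; but its de-electrification is $[p,q]_X \cup [q,w]_X$, a path of length $2R-1$ between points at $X$-distance $3$, which backtracks by roughly $R$ and so is not an unparametrised quasi-geodesic for any constants independent of $R$. Since your surjectivity and inverse-continuity arguments both pivot on (ii), the two hardest steps of the proof rest on a false lemma. The usable substitute, and what Dowdall--Taylor actually deploy, is the reverse fellow-travelling statement: for $x,y \in X$, the projection of the $X$-geodesic $[x,y]_X$ is uniformly Hausdorff-close in $Z$ to \emph{every} $Z$-geodesic from $x$ to $y$. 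Surjectivity is then proved not by de-electrifying the ray representing $\eta \in \partial Z$, but by taking $X$-geodesics $[z_0, \sigma(n)]_X$ to points along it, extracting a limit ray $\gamma$ by Arzel\`a--Ascoli (using $d_Z \le d_X$ to see $d_X(z_0,\sigma(n)) \to \infty$), and then verifying --- this is where the real work lies --- that $\proj(\gamma)$ has infinite diameter in $Z$ and converges to $\eta$; inverse continuity must be rerouted through the same statement.
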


We shall define $\partial Y$ to be the union of the limit sets of each
$Y_i$, i.e. $\partial Y = \bigcup_{i \in I} \partial Y_i$.  This need
not be equal to the limit set of the union of the $Y_i$, which will
often be all of $\partial X$ if, for example, $G$ acts coarsely
transitively on $X$.

\begin{proposition}\label{prop.boundary}
Let $X$ be a Gromov hyperbolic space, and let $Y = \{ Y_i\}_{i \in I}$
be a collection of uniformly quasi-convex subsets of $X$.  Let $Z$ be
the space formed by coning off $X$ with respect to $Y$.  Let
$\partial Y := \bigcup_{i}\partial Y_{i}$.  Then the space
$\partial_{Z}(X)$ is contained in $\partial X\setminus\partial Y$.
\end{proposition}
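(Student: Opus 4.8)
The plan is to show the contrapositive: if a boundary point $\xi \in \partial X$ lies in $\partial Y$, then $\xi \notin \partial_Z(X)$. So suppose $\xi \in \partial Y_i$ for some $i \in I$, and let $\gamma$ be any quasi-geodesic ray in $X$ with $\gamma(\infty) = \xi$. I must produce a uniform bound on $\diam_Z(\proj(\mathrm{Im}\,\gamma))$, which shows $\xi$ fails the defining condition of $\partial_Z(X)$.

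The key idea is that since $\xi \in \partial Y_i$ and $Y_i$ is quasi-convex, the ray $\gamma$ must fellow-travel $Y_i$ eventually. More precisely, I would first fix a quasi-geodesic ray $\gamma_i$ in $Y_i$ with $\gamma_i(\infty) = \xi$; such a ray exists because $\xi \in \partial Y_i$ is by definition a limit of a quasi-geodesic ray inside $Y_i$. Since $\gamma$ and $\gamma_i$ are both quasi-geodesic rays in the Gromov hyperbolic space $X$ converging to the same boundary point $\xi$, a standard stability/Morse argument shows they are asymptotic: there is a constant $M$ (depending only on $\delta$ and the quasi-geodesic constants) such that, after reparametrization, $\mathrm{Im}\,\gamma$ lies in the $M$-neighbourhood of $\mathrm{Im}\,\gamma_i$ outside a compact initial segment. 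Because $Y_i$ is quasi-convex with uniform constant, $\mathrm{Im}\,\gamma_i$ lies in a bounded neighbourhood of $Y_i$, so the whole tail of $\gamma$ lies in a uniformly bounded neighbourhood of $Y_i$ in $X$.

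Now I translate this into the electrified metric. In $Z = X_Y$, the set $Y_i$ has diameter at most one (it is coned off to a single vertex $y_i$), so any two points of $X$ lying within a bounded $X$-neighbourhood of $Y_i$ are at uniformly bounded $Z$-distance from each other: a point at $X$-distance $t$ from $Y_i$ is at $Z$-distance at most $t + 1$ from $y_i$, hence any two such points are within $Z$-distance $2(M') + 1$ where $M'$ is the neighbourhood constant from the previous step. Therefore the tail of $\gamma$ has uniformly bounded $Z$-diameter. The only remaining contribution to $\diam_Z(\proj(\mathrm{Im}\,\gamma))$ is the compact initial segment of $\gamma$ before it enters the $M$-neighbourhood of $\gamma_i$; this segment has bounded $X$-length, hence bounded $Z$-diameter as well since $\proj$ is distance non-increasing. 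Adding the two contributions shows $\diam_Z(\proj(\mathrm{Im}\,\gamma)) < \infty$, so $\xi \notin \partial_Z(X)$, as desired.

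The main obstacle is making the asymptotic fellow-travelling argument clean and uniform: I need to know that two quasi-geodesic rays with the same endpoint in $\partial X$ are uniformly Hausdorff-close on their tails, with a constant independent of which $Y_i$ is involved. This requires the quasi-geodesic constants for $\gamma_i$ to be controllable — which follows from the \emph{uniform} quasi-convexity of the family $Y$, since uniform quasi-convexity lets one choose the in-$Y_i$ rays $\gamma_i$ with a single set of quasi-geodesic constants across all $i$. With that uniformity in hand the Morse lemma delivers a single constant $M$, and the rest is the routine conversion between the $X$-metric and the $Z$-metric described above.
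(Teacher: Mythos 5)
Your proposal is correct and is essentially the paper's own argument, just carried out in full detail: the paper's proof likewise takes a quasi-geodesic ray $\gamma$ with $\gamma(\infty)\in\partial Y_i$, invokes quasi-convexity of $Y_i$ to conclude that $\gamma$ eventually fellow travels $Y_i$, and deduces that the image of $\gamma$ in $Z$ has finite diameter. The steps you spell out (producing a comparison ray near $Y_i$, the Morse stability argument, and the conversion from a bounded $X$-neighbourhood of $Y_i$ to bounded $Z$-diameter via the cone point) are exactly the details the paper compresses into ``we see that $\gamma$ eventually fellow travels with $Y_i$.''
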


\begin{proof}
Let $\gamma$ be a quasi-geodesic ray such that $\gamma(\infty)\in\partial Y_{i}$ for some $Y_{i}\in Y$.
Since $Y_{i}$ is quasi-convex,
we see that  $\gamma$ eventually fellow travels with $Y_{i}$, and thus the diameter of the image of $\gamma$ in $Z$ is finite.
\end{proof}

\subsection{Quotients and qi-embedded subgroups} \label{section:subgroups}

We consider qi(quasi isometrically)-embedded subgroups.
\begin{definition}
Let $X$ be a path-connected hyperbolic space on which $G$ acts isometrically. 
Let $H<G$ be a finitely generated subgroup. Then
\begin{itemize}
\item $H$  is said to {\em qi-embed} into $X$ if
$H\ni h\mapsto h(p)\in X$ defines a quasi-isometric embedding $H\rightarrow X$ for any $p\in X$.

\item {\em the limit set in $X$} of $H$ is the set of accumulation points in $\partial X$ of any orbit of $H$ in $X$.
\end{itemize}
\end{definition}

\begin{remark}
If $H$ qi-embeds into a geodesic Gromov hyperbolic space, then $H$ is also Gromov hyperbolic.
\end{remark}
We now characterize qi-embedded subgroups in the quotients.
\begin{theorem}\label{thm.qi-embed}
Let $X$, $Y$, $Z:=X_{Y}$, and $G$ as in Proposition \ref{prop.hyperbolic}.
Let $H$ be a finitely generated subgroup of $G$.
Then 
\begin{enumerate}
\item[(a)] $H$ qi-embeds in $X$ with limit set contained in $\partial X\setminus \partial Y$ in $X$
if and only if $H$ qi-embeds into $Z$
\item[(b)] If $H$ qi-embeds in $X$ with limit set contained in
$\partial X\setminus \partial Y$, then no element of infinite order in
$H$ preserves any $Y_{i}\in Y$ setwise.
\end{enumerate}
\end{theorem}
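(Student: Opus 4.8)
The plan is to prove the two directions of (a) separately and then deduce (b) directly. Fix a basepoint $p\in X$ and write $\proj\colon X\to Z$ for the inclusion; since $\proj$ is $1$-Lipschitz we have $d_Z(x,y)\le d_X(x,y)$, and because $H$ is finitely generated the orbit map $H\to Z$ automatically satisfies the \emph{upper} quasi-isometry bound. For the implication ``$H$ qi-embeds into $Z\Rightarrow H$ qi-embeds into $X$ with $\Lambda_X(H)\subseteq\partial X\setminus\partial Y$'', the qi-embedding into $X$ is immediate: the lower bound $d_Z(p,hp)\ge A^{-1}|h|_H-B$ forces the same lower bound for $d_X$, while the upper bound into $X$ is free. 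For the limit-set claim I would argue by contradiction. If some orbit sequence converged to $\xi\in\partial Y_i$, then (using that $H$ is hyperbolic and qi-embeds, so the orbit map extends to a homeomorphism $\partial H\to\Lambda_X(H)$) there is a geodesic ray in $H$ whose image is a quasi-geodesic ray $\gamma$ in $X$ with $\gamma(\infty)=\xi$. As in the proof of Proposition~\ref{prop.boundary}, quasi-convexity of $Y_i$ forces $\gamma$ to eventually fellow-travel $Y_i$, so $\diam_Z(\proj(\mathrm{Im}\,\gamma))<\infty$; but a qi-embedding of $H$ into $Z$ sends this ray to a $Z$-quasi-geodesic ray, which has infinite $Z$-diameter, a contradiction.

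The substantive direction is ``$H$ qi-embeds into $X$ with $\Lambda_X(H)\subseteq\partial X\setminus\partial Y\Rightarrow H$ qi-embeds into $Z$'', where only the lower bound $d_Z(p,hp)\ge A^{-1}|h|_H-B$ needs proof. Suppose it fails, giving $h_n\in H$ with $|h_n|_H\to\infty$ and $d_Z(p,h_np)=o(|h_n|_H)$. I would lift a $Z$-geodesic $[p,h_np]_Z$ to an $X$-path by replacing each passage through a cone point $y_i$ (entering $Y_i$ at $u$, leaving at $v$) by the $X$-geodesic $[u,v]$; by the analysis of \cite{dowdall-taylor} these de-electrified paths are uniform $X$-quasi-geodesics, and the total length of their genuine $X$-subsegments is at most $d_Z(p,h_np)=o(|h_n|_H)$. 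Since the lift joins $p$ to $h_np$ and $d_X(p,h_np)\asymp|h_n|_H$, a pigeonhole over the at most $d_Z(p,h_np)$ cone-passages yields a single $Y_{k_n}$ and points $u_n,v_n\in Y_{k_n}$ with $d_X(u_n,v_n)\to\infty$, lying uniformly close to the orbit quasi-geodesic by the Morse lemma. Translating by a suitable $a_n\in H$ carries $u_n$ near $p$; then, working in the proper Cayley graph of $H$, I would extract via Arzel\`a--Ascoli a limiting geodesic ray $\rho$ whose image has endpoint $\xi\in\Lambda_X(H)$ and fellow-travels the translated sets $a_n^{-1}Y_{k_n}$ over initial segments of length $\to\infty$.

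The hard part is the final step: concluding $\xi\in\partial Y$, contradicting the hypothesis. This does not follow from merely fellow-traveling a sequence of \emph{distinct} quasi-convex sets over growing-but-finite segments (a nested sequence of geodesic segments along a single ray is a cautionary example, and reflects that $\partial X\setminus\partial Y\not\subseteq\partial_Z X$ in general). Here I would use uniform quasi-convexity together with hyperbolicity to show that the sets $a_n^{-1}Y_{k_n}$ coarsely agree along the limiting ray and isolate a single $Y_\ast\in Y$ with $\xi\in\partial Y_\ast$; this is the step where the bounded-projection and local-finiteness features of the specific families $Y$ enter, and the one I expect to require the most care.

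Finally, for (b), suppose $h\in H$ has infinite order and $hY_i=Y_i$. Since $H$ is hyperbolic and qi-embeds in $X$, the map $n\mapsto h^np$ is a quasi-geodesic, so $h$ acts loxodromically on $X$ with attracting fixed point $h^+\in\partial X$, and $h^+=\lim_{n\to\infty}h^np\in\Lambda_X(H)$. Choosing any $y\in Y_i$, the orbit $h^ny$ lies in $Y_i$ and also converges to $h^+$, whence $h^+\in\partial Y_i\subseteq\partial Y$. This contradicts $\Lambda_X(H)\subseteq\partial X\setminus\partial Y$, completing the argument.
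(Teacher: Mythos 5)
Your proposal follows the paper's architecture almost exactly. The direction ``$H$ qi-embeds into $Z$ $\Rightarrow$ $H$ qi-embeds into $X$ with limit set avoiding $\partial Y$'' and part (b) are handled in your write-up just as in the paper (quasiconvexity forces an orbit converging to a point of $\partial Y_i$ to fellow-travel $Y_i$ and hence be bounded in $Z$; a loxodromic preserving $Y_i$ has its attracting fixed point in $\partial Y_i$), and these parts are complete and correct. For the substantive direction the paper differs from you only in packaging: it proves merely that the orbit map into $Z$ is metrically proper (so negation gives $d_Z(x,g_ix)<D$ with $|g_i|\to\infty$, hence a \emph{uniformly bounded} number of cone passages rather than your sublinearly many), and then upgrades properness to a qi-embedding by citing \cite[Lemma 3.4]{dowdall-taylor}; you instead negate the linear lower bound and run a de-electrification/pigeonhole argument. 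Both routes funnel into the identical final claim: the limit in $\partial X$ of such a sequence must lie in $\partial Y$.

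That final claim is exactly where your proposal stops, and your suspicion about it is correct --- but you should know that the paper does not prove it either: its entire justification is the sentence ``But then corresponding limit point in $\partial X$ must be in $\partial Y$.'' Indeed, your cautionary example upgrades to a genuine counterexample to the statement under only the hypotheses of Theorem \ref{prop.hyperbolic}: take $X=\mathbb{R}$ (a line), $G=H=\mathbb{Z}$ acting by translations, and $Y=\{[k-n,k+n] \mid k\in\mathbb{Z},\ n\in\mathbb{N}\}$. This family is uniformly quasiconvex and $G$-equivariant, every $\partial Y_i$ is empty so $\partial Y=\emptyset$, and $H$ qi-embeds in $X$ with limit set contained in $\partial X\setminus\partial Y$; yet any two orbit points lie in a common $Y_i$, so the orbit has diameter at most $1$ in $Z$ and $H$ cannot qi-embed into $Z$. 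So the forward implication genuinely needs an extra hypothesis on the family $Y$, and your diagnosis of what it should be is right: a uniform local finiteness condition in the style of Lemma \ref{lem.finiteparallel} (or the bounded-projection property (2) of Theorem \ref{thm.WPD}). With such a condition the step closes: with at most $D$ cone passages one passes to a subsequence in which the first unbounded jump occurs at a fixed index, so the corresponding sets come uniformly close both to the basepoint and to points arbitrarily far along the ray to the limit $\lambda$; since the multiplicity bound $D_2$ of Lemma \ref{lem.finiteparallel} does not depend on the pair of points, a single $Y_\ast$ must recur infinitely often, and quasiconvexity then gives $\lambda\in\partial Y_\ast$. This repair is available where the theorem is actually applied in the paper (Theorem \ref{thm.asymmetric}, where $Y$ is the family of symmetric subspaces). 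In short: your proof is incomplete, but the missing step is not an idea you failed to extract from the paper --- it is a gap in the paper's own proof, which you located precisely.
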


\begin{proof}[Proof of Theorem \ref{thm.asymmetric}]
Suppose $H$ qi-embeds into $Z$,
then $H$ must qi-embed into $X$.
Furthermore, if the limit set contains an element $\lambda$ in $\partial Y$, 
then there must be a sequence $(g_{i})_{i\geq 0}$ in $H$ such that 
$\{g_{i}x\}_{i}$ is quasi-isometrically embedded in $X$ and $g_{i} x\rightarrow \lambda\in\partial X$ for some $x\in X$.
As $\lambda\in\partial Y$, the orbit $g_{i} x$ must fellow travel with some $Y'\in Y$.
But this means that $g_{i} x$ is in a bounded region in $Z$ which contradicts the assumption that $H$ qi-embeds into $Z$.

Conversely, suppose $H$ qi-embeds in $X$ and the limit set is contained in $\partial X\setminus \partial Y$ in $X$.
Now we will prove that for every $D\geq 0$, there exists $N\geq 0$ such that
$d_{Z}(x, g x)\geq D$ whenever $|g|\geq N, g \in H$.
Suppose not, then there must be $D\geq 0$ and $(g_{i})_{i\geq 0}$ such that 
$d_{Z}(x, g_{i}x)<D$ and $|g_{i}|\rightarrow \infty$.
Then after taking subsequences, $g_{i}$ converges to a point in the boundary $\partial H$.
But then corresponding limit point in $\partial X$ must be in $\partial Y$, which contradicts our assumption.
Now (a) follows from \cite[Lemma 3.4]{dowdall-taylor}.

To prove (b), we note that if $H$ qi-embeds into $Z$, every element of infinite order in $H$ acts loxodromically on $Z$.
Then if $H$ contains an element of infinite order which preserves $Y'\in Y$, 
then the limit set in $X$ must contain a point in $\partial Y'$.
Thus (b) follows.
\end{proof}

\subsection{The compression body graph} \label{section:compression body}

Finally, we observe that these results apply in the case of $\C_D(S)$,
the electrification of the curve complex along the discs sets of
compression bodies.  These sets are unifomly quasiconvex
\cite{masur-minsky}, the electrification has infinite diameter, and
there are pseudo-Ansosov elements which act loxodromically on
$\C_D(S)$ \cite{maher-schleimer}.  It remains to verify condition (2)
from Theorem \ref{thm.WPD}, which is a corollary of the following
stability result.

\begin{theorem} \cite{maher-schleimer}*{Theorem
  4.2} \label{theorem:stability} %
Given a closed orientable surface $S$ and a constant $k$, there is a
constant $k' > k$ with the following property. Suppose that $\gamma$
is a geodesic ray in $\C(S)$, and $V_i$ is a sequence of compression
bodies such that, for all $i$, the segment $\gamma | [0, i]$ lies in a
$k$-neighborhood of $D(V_i)$. Then there is a constant $k'$ and a
non-trivial compression body $W$, contained in infinitely many of the
$V_i$, such that that $\gamma$ is contained in a $k'$-neighborhood of
$D(W)$.
\end{theorem}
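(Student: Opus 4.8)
The plan is to treat this as a coarse-geometry extraction problem, with the only genuinely three-dimensional input being the finiteness of topological types of compression body on $S$. Throughout I would use that the disc sets $D(V)$ are uniformly $Q$-quasiconvex in the $\delta$-hyperbolic space $\C(S)$ (Masur--Minsky), so that nearest-point projection $\pi_{D(V)}$ is coarsely well-defined and coarsely Lipschitz, and the hypothesis that $\gamma|[0,i]$ lies in the $k$-neighborhood of $D(V_i)$ says exactly that $\gamma$ coarsely fellow-travels $D(V_i)$ up to time $i$, with constants controlled by $\delta, Q, k$ alone.

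First I would record the finiteness input: up to the action of $\MCG(S)$ there are only finitely many homeomorphism types of compression body with boundary $S$, and each such $V$ has complexity (number of $2$-handles in a defining system) bounded by a constant depending only on the genus of $S$. I would also note that, for $i \ge 1$, the set $\gamma|[0,i]$ has positive diameter, so $D(V_i)$ is non-empty and $V_i$ is non-trivial.

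The heart of the argument is to extract a single compression body $W$ out of the sequence $V_i$. For each $i$ I would let $W_i$ be a non-trivial sub-compression body $W_i \subseteq V_i$ of minimal complexity subject to $\gamma|[0,i] \subseteq \mathcal{N}_{k'}(D(W_i))$, where $k' > k$ is the output constant; such a $W_i$ exists because $V_i$ itself qualifies. Since the complexity of every compression body on $S$ is bounded, the $W_i$ have uniformly bounded complexity, and by the finiteness of topological types we may pass to a subsequence on which all $W_i$ have a fixed type. The point I would then push on is organizing these by the inclusion partial order: using that $W \subseteq V_i$ forces $D(W) \subseteq D(V_i)$ and that chains of nested compression bodies on $S$ stabilize after boundedly many steps (again by the complexity bound), I would run a diagonal (K\"onig's lemma) argument to produce a single non-trivial $W$ that is contained in infinitely many $V_i$ and with $\gamma|[0,i] \subseteq \mathcal{N}_{k'}(D(W))$ for those $i$. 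Equivalently, and perhaps more cleanly, I would characterize closeness to a disc set via the bounded geodesic image theorem in terms of the subsurfaces to which $D(V_i)$ projects with bounded diameter, and build $W$ from the collection of subsurfaces to which $\gamma$ has uniformly bounded projection; the finiteness of subsurfaces up to $\MCG(S)$ is what makes this collection stabilize.

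Granting such a $W$, the conclusion is immediate: since $W \subseteq V_i$ for arbitrarily large $i$ and $\gamma|[0,i] \subseteq \mathcal{N}_{k'}(D(W))$ for those $i$, the entire ray lies in the $k'$-neighborhood of $D(W)$, and $W$ is non-trivial because the unbounded set $\gamma$ cannot lie near the empty disc set. The main obstacle is precisely the extraction step, and within it the passage from a fixed \emph{topological type} to a fixed \emph{isotopy class}: the minimal bodies $W_i$ come with mapping classes $h_i$ moving a fixed model $W_{*}$, and the basepoints $h_i^{-1}\gamma(0)$ can wander inside the infinite-diameter set $D(W_{*})$. Controlling this wandering --- by exploiting the action of the stabilizer (the compression body group) and the fact that the relevant projections are pinned near the fixed initial segments $\gamma|[0,n]$ --- is where the real work lies; the accompanying quasiconvexity and bounded-geodesic-image estimates are routine once the single $W$ has been pinned down.
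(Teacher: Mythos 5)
First, note that the paper you are checking against does not actually prove Theorem \ref{theorem:stability}: it is imported verbatim from \cite{maher-schleimer}*{Theorem 4.2}, so the relevant comparison is with the proof in that paper. Your proposal correctly assembles the coarse-geometric scaffolding (uniform quasiconvexity of disc sets, bounded complexity and finitely many topological types of compression bodies, nesting $W \subseteq V$ implies $D(W) \subseteq D(V)$, and the easy closing step once a single $W$ is in hand), but the central step --- producing one \emph{isotopy class} $W$ of non-trivial compression body contained in infinitely many $V_i$ --- is exactly the step you leave open, and the devices you propose for it fail. K\"onig's lemma requires finite branching, and the branching here is genuinely infinite: a fixed compression body has infinitely many isotopy classes of non-trivial sub-compression bodies, the curve complex is locally infinite, and a fixed topological type of compression body has an infinite orbit of isotopy classes whose disc sets can all pass near $\gamma(0)$. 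For the same reason, ``finiteness of subsurfaces up to $\MCG(S)$'' pins down nothing about the actual subsurfaces arising from the $V_i$, so the bounded-geodesic-image variant has the same hole. You candidly flag all of this (``where the real work lies''), so what you have is a correct reduction together with an accurate diagnosis of the difficulty, not a proof.

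The ingredient that closes this gap in \cite{maher-schleimer} is genuinely three-dimensional and is absent from your outline: a common-compression lemma asserting (roughly) that if the disc sets $D(V)$ and $D(W)$ of two compression bodies both come within distance $k$ of every point of a sufficiently long geodesic segment, then $V$ and $W$ contain a common non-trivial sub-compression body $U$ whose disc set $D(U)$ fellow-travels the segment with uniformly worse constants; this is established by surgery arguments on discs, not by coarse geometry. Granting it, the extraction is an induction on complexity rather than a compactness or diagonal argument: either infinitely many of the current compression bodies coincide as isotopy classes (and that one is the desired $W$), or the lemma applied to pairs produces common sub-compression bodies of strictly smaller complexity, since a proper sub-compression body has strictly smaller complexity and two distinct bodies of equal complexity cannot contain one another. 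Because the complexity of a non-trivial compression body with boundary $S$ is bounded below, the second alternative can occur only boundedly many times, so the process terminates with a single $W$ contained in infinitely many $V_i$, and the constant $k'$ degrades only through a bounded number of applications of the lemma. Your plan has the right bookkeeping surrounding this induction (bounded complexity, nesting, stabilization of chains) but is missing its engine.
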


We now verify condition (2) from Theorem \ref{thm.WPD} in this
setting.

\begin{corollary}
Let $S$ be a closed orientable surface, and let $g$ be a
pseudo-Ansosov map which acts loxodormically on $\C_D(S)$, and let
$\alpha$ be an axis for $g$ in $\C(S)$.  Then there is a constant $C$
such that for any disc set $D(V)$, the nearest point projection of
$D(V)$ ot $\alpha$ has diameter at most $C$.
\end{corollary}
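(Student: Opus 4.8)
The plan is to reduce this corollary directly to the stability result of Theorem~\ref{theorem:stability} by a contrapositive argument. First I would suppose, for contradiction, that no such uniform constant $C$ exists. Then for each $i$ there is a disc set $D(V_i)$ of some compression body $V_i$ whose nearest point projection to the axis $\alpha$ has diameter at least $i$. The axis $\alpha$ is a quasi-geodesic, and since $g$ is pseudo-Anosov we may pass to an actual bi-infinite geodesic in $\C(S)$ fellow-traveling $\alpha$; the diameter of the projection of $D(V_i)$ to this geodesic is then at least $i - O(1)$. My aim is to manufacture from these projections a long geodesic segment that lies in a bounded neighborhood of each $D(V_i)$, so that Theorem~\ref{theorem:stability} produces a single nontrivial compression body $W$ contained in infinitely many $V_i$ whose disc set coarsely contains all of $\alpha$.

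The key step is the passage from ``large diameter of projection'' to ``long fellow-traveling segment.'' Here I would invoke the standard behavior of nearest point projection to quasi-convex sets in a Gromov hyperbolic space: the disc sets $D(V)$ are uniformly quasi-convex \cite{masur-minsky}, so the nearest point projection is coarsely Lipschitz and coarsely idempotent, and a geodesic between two points whose projections to $D(V_i)$ are far apart must itself pass within a bounded neighborhood of $D(V_i)$ along a segment whose length is comparable to $\diam(\proj(D(V_i)))$. Dually, the projection of $D(V_i)$ to the geodesic $\alpha$ having large diameter forces a long subsegment of $\alpha$ to lie uniformly close to $D(V_i)$. After reindexing so these subsegments are nested and exhaust $\alpha$ in one direction, I obtain, for a geodesic ray $\gamma$ along $\alpha$, that $\gamma|[0,i]$ lies in a uniform $k$-neighborhood of $D(V_i)$ for every $i$, which is exactly the hypothesis of Theorem~\ref{theorem:stability}.

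Applying that theorem yields a constant $k'$ and a nontrivial compression body $W$, contained in infinitely many $V_i$, with $\gamma$ contained in the $k'$-neighborhood of $D(W)$. But then the entire quasi-geodesic ray $\alpha$ lies in a bounded neighborhood of the quasi-convex set $D(W)$, so $\alpha$ has bounded image in the electrification $\C_D(S)$. This contradicts the assumption that $g$ acts loxodromically on $\C_D(S)$, since a loxodromic axis must have infinite diameter in $\C_D(S)$. The contradiction establishes the uniform bound $C$.

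The main obstacle I anticipate is making the ``large projection diameter implies long fellow-traveling subsegment'' step quantitatively uniform, and in particular arranging the nesting and the choice of a single basepoint so that the subsegments genuinely align into the nested family $\gamma|[0,i]$ required by Theorem~\ref{theorem:stability} rather than merely long segments scattered along $\alpha$. This likely requires first fixing an orientation/endpoint of the geodesic, using hyperbolicity to control where along $\alpha$ each fellow-traveling segment sits, and possibly applying Ramsey-type or pigeonhole reasoning to guarantee infinitely many of the segments share a common forward direction; I would also need to confirm that the stability theorem's hypotheses are met after replacing the quasi-geodesic axis by a true geodesic ray, absorbing the quasi-geodesic constants into $k$.
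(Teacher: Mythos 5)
Your overall strategy is exactly the paper's: argue by contradiction, convert large projection diameter to long fellow-traveling segments of $\alpha$ near the quasi-convex sets $D(V_i)$, feed these into the stability theorem (Theorem \ref{theorem:stability}) to get a single disc set $D(W)$ coarsely containing $\alpha$, and contradict loxodromicity of $g$ on $\C_D(S)$. However, there is a genuine gap at the step you yourself flag as the main obstacle, and the remedies you propose do not close it. After hyperbolicity gives you, for each $i$, a subsegment $\alpha|[a_i,b_i]$ with $b_i - a_i \to \infty$ lying in a uniform neighborhood of $D(V_i)$, nothing constrains \emph{where} along $\alpha$ these segments sit: the $a_i$ may run off to infinity, so no reindexing, choice of orientation, or pigeonhole/Ramsey argument can make them into a nested family $\gamma|[0,i]$ based at a common point, which is what the hypothesis of Theorem \ref{theorem:stability} requires. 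Pigeonhole can fix the \emph{direction} of travel, but not the \emph{position} of the segments, and hyperbolicity by itself says nothing about position either.

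The missing idea, which is the one nontrivial move in the paper's proof, is to use the group action: $g$ acts coarsely transitively on its own axis $\alpha$, and the family of disc sets is $\MCG(S)$-equivariant, so $g^{n_i} D(V_i) = D(g^{n_i} V_i)$ is again a disc set. Translating each $D(V_i)$ by a suitable power $g^{n_i}$ slides its fellow-traveling segment back to start at a fixed basepoint of $\alpha$, producing segments $\alpha|[0,t_i]$ with $t_i \to \infty$ lying in a uniform $k$-neighborhood of the disc sets $D(g^{n_i}V_i)$. With this one substitution your argument is complete and coincides with the paper's; the remaining points you raise (replacing the quasi-geodesic axis by a geodesic ray and absorbing the quasi-geodesic constants into $k$) are routine.
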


\begin{proof}
Suppose there is a sequence of compression bodies $V_i$ such that
$\diam(\pi_\alpha(D(V_i)))$ tends to infinity.  As the $D(V_i)$ are
quasiconvex, and $g$ acts coarsely transitively on $\alpha$, there is
a constant $k$ such that we may translate each $D(V_i)$ by a power
$n_i$ of $g$ so that $\alpha | [0, t_i]$ is contained in a
$k$-neighbourhood of $g^{n_i} D(V_i)$, and $t_i$ tends to infinity as
$i$ tends to infinity.  Theorem \ref{theorem:stability} then implies
that there is a $k'$ such that $\alpha$ is contained in a
$k'$-neighbourhood of a single disc set $D(W)$.  This implies that the
image of $\alpha$ in $C_D(S)$ is finite, contradicting the fact that
$g$ acts loxodromically on $C_D(S)$.
\end{proof}

\section{The co-symmetric curve graph} \label{section:co-symmetric}

We introduce the co-symmetric curve graph.  Let $S = S_{g, n}$ be a
surface of finite type, where $g$ is the genus and $n$ is the number
of punctures of $S$.  If $3g - 3 + n > 0$, then the mapping class
group of $S$ contains pseudo-Anosov elements.  There is a finite list
of exceptional cases in which the mapping class group contains a
hyperelliptic involution which acts trivially on the curve graph and
the Teichm\"uller space of the surface, and for which every
pseudo-Anosov map is symmetric with respect to this involution.  We
shall call these special cases the \emph{exceptional} surfaces, and
they consist precisely of the surfaces $S_{0, 4}, S_{1, 1}, S_{1, 2}$
and $S_{2, 0}$, see for example \cite{farb-margalit}*{Section 12.1}.
We may exclude precisely these surfaces from the collection of
surfaces satisfying $3g - 3 + n > 0$, by replacing the condition
$3g - 3 + n > 0$ with the condition $2g + n > 4$.

By the word {\em curve} on $S$, we mean an isotopy class of essential
simple closed curve on $S$.  The curve graph, denoted
$\mathcal{C}(S)$, of $S$ is the graph each of whose vertices
corresponds to a curve, and two vertices are connected by an edge of
length $1$ if corresponding curves can be realized disjointly.  We
also need to consider orbifolds covered by $S$, so from now on any
finite cover may be an orbifold or branched cover of $S$. For such an
orbifold, we define the curve graph as the one for the surface we
obtain by puncturing all the orbifold points.  Given a finite covering
of surfaces, there is a map between the corresponding curve graphs.

\begin{definition}
Let $p\colon S\rightarrow S'$ be a finite covering.  We denote by
$\Pi\colon \mathcal{C}(S') \rightarrow \mathcal{C}(S)$ the associated
(one-to-finite) map on the curve graphs which is defined so that
$a\in\Pi(b)$ if and only if $p(a) = b$.  We call
$\Pi(\mathcal{C}(S'))$ the subspace of symmetric curves with respect
to $p\colon S\rightarrow S'$.
\end{definition}
By the work of Rafi-Schleimer, the subspaces of symmetric curves are quasi-isometrically embedded.
\begin{theorem}[\cite{RS}]\label{thm.RS}
The map $\Pi\colon \mathcal{C}(S')\rightarrow\mathcal{C}(S)$ is a quasi-isometric embedding with quasi-isometry constants depending
only on $S$ and the degree of the finite covering $p\colon S\rightarrow S'$.
\end{theorem}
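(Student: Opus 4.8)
The plan is to compare $\C(S')$ and $\C(S)$ by routing both through \Teich space, using the systole map of Masur--Minsky. The coarsely Lipschitz direction is immediate: lifting preserves disjointness, so a geodesic $b = c_0, c_1, \dots, c_n = b'$ in $\C(S')$ lifts to the sequence of multicurves $p^{-1}(c_0), \dots, p^{-1}(c_n)$ in $\C(S)$ with consecutive terms disjoint, and since each $p^{-1}(c_i) = \Pi(c_i)$ has diameter at most $1$ in $\C(S)$ we obtain $d_{\C(S)}(\Pi b, \Pi b') \le d_{\C(S')}(b, b') + 1$. All the content is in the reverse inequality.

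For the lower bound I would pass to \Teich space via the lifting map $L \colon \T(S') \to \T(S)$ that pulls a conformal structure back along $p$. First, $L$ is an isometric embedding for the \Teich metric: given $X', Y' \in \T(S')$ with extremal map $f'$ of dilatation $K$ and quadratic differential $q'$, its lift $\widetilde{f'}\colon L(X') \to L(Y')$ is a \Teich map with quadratic differential $p^* q'$ and the same dilatation $K$; by uniqueness of \Teich maps it is the extremal map upstairs, so $d_{\T(S)}(L X', L Y') = d_{\T(S')}(X', Y')$. Note that this needs no regularity of the cover.

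Next I would invoke Masur--Minsky \cite{masur-minsky}: the systole map $\mathrm{sys} \colon \T(S) \to \C(S)$ is coarsely Lipschitz and induces a quasi-isometry from $\T(S)$, electrified along its thin parts, onto $\C(S)$ (and similarly for $S'$), under which distance in $\C(S)$ is coarsely the time a \Teich geodesic spends in the thick part. Since $L$ carries \Teich geodesics to \Teich geodesics, the task reduces to matching the two coned-off pictures up to bounded error: one must show that $L(X')$ is thin if and only if $X'$ is thin, with the corresponding short curves matched under $\Pi$. One implication is immediate, as the lift of a short curve is short. Granting the matching, a shortest curve $\sigma'$ on $X'$ lifts to $\Pi(\sigma')$, which is short on $L(X')$, so the square relating $L$, $\Pi$, and the two systole maps commutes up to bounded error; combined with the isometry $L$ this yields $d_{\C(S')}(b, b') \le K_0\, d_{\C(S)}(\Pi b, \Pi b') + K_0$, and hence the theorem.

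The hard part is this thick--thin comparison, and specifically the implication that $L(X')$ thin forces $X'$ thin. A curve $\widetilde\gamma$ that is short on $L(X')$ projects to a closed geodesic on $X'$ of the same length, so the systole $\gamma'$ of $X'$ is short and, being shortest, simple; thus $X'$ is thin. Its lift $\Pi(\gamma')$ is then a short multicurve on $L(X')$, which by the collar lemma is disjoint from $\widetilde\gamma$, so $d_{\C(S)}(\widetilde\gamma, \Pi(\gamma')) \le 1$ and the two coned loci match. Keeping all of these lengths below the Margulis constant forces the length thresholds to depend on the degree of $p$, since a component of $\Pi(\gamma')$ can be as much as $\deg(p)$ times as long as $\gamma'$; this is exactly where the dependence of the quasi-isometry constants on the topology of $S$ and the degree of $p$ enters. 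This uniform thick--thin comparison, rather than any single formal step, is the crux of the argument.
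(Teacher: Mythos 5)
You should know at the outset that the paper itself contains no proof of Theorem \ref{thm.RS}: it is imported wholesale from Rafi--Schleimer \cite{RS}, so your proposal can only be judged on its own terms and against that external argument. Several of your steps are sound and do appear in \cite{RS}: the coarse-Lipschitz direction via lifting disjointness; the fact that pulling back conformal structures gives an isometric embedding $\T(S')\to\T(S)$ for the \Teich metric (no regularity of the cover needed, this is \cite{RS}*{Section 7}); and the two-sided matching of thin parts, including the observations that a short geodesic upstairs projects to a short geodesic downstairs, that a shortest geodesic is simple, and that all thresholds degrade by the degree of the cover.

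The genuine gap is the one-sentence reduction ``the task reduces to matching the two coned-off pictures up to bounded error,'' which is where the whole theorem lives; you have mislocated the difficulty. The statement you attribute to Masur--Minsky --- that $d_{\C(S)}$ between systoles is coarsely the thick time of the connecting \Teich geodesic --- is not what \cite{masur-minsky} proves (they prove that electrified \Teich space is quasi-isometric to $\C(S)$ and that shadows of \Teich geodesics are \emph{unparameterized} quasi-geodesics), and it is in fact false: by Masur's criterion, a ray whose vertical foliation is minimal and filling but not uniquely ergodic eventually leaves every thick part, yet by Klarreich's theorem \cite{klarreich} its shadow still converges to a point of $\partial\C(S)$, so one has arbitrarily long segments with zero thick time and arbitrarily large curve-complex displacement. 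Thus curve-complex progress can occur entirely inside the thin part, through a moving chain of short curves $\alpha_1',\alpha_2',\dots$ with consecutive ones disjoint. Once this is acknowledged, your thin--thick matching only shows that $L$ carries coned sets into coned sets, which yields the \emph{easy} coarse-Lipschitz inequality for the electrified metrics; the hard inequality $d_{\C(S')}(b,b')\le K_0\, d_{\C(S)}(\Pi b,\Pi b')+K_0$ requires ruling out electric shortcuts upstairs, since an efficient electrified path in $\T(S)$ between $LX'$ and $LY'$ has no reason to stay near the image $L(\T(S'))$, and ``combined with the isometry $L$'' silently conflates a \Teich-metric isometry with an electric-metric statement. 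Worse, in the thin-progress regime your argument is circular: the lower bound you would need on $d_{\C(S)}$ between the lifts $\Pi\alpha_i'$ of the chain of short curves is exactly the theorem being proved. This is precisely the phenomenon the present paper is organized around --- compare hypothesis (2) of Theorem \ref{thm.WPD}, which exists because coning off quasiconvex sets preserves essentially nothing without a bounded-projection condition --- and it is why Rafi--Schleimer's actual proof, while \Teich-theoretic in the same spirit as yours, must invoke substantially more machinery (unparameterized quasi-geodesic shadows together with control of the geodesic inside the thin part, via Minsky's product regions and Rafi's description of short curves along a \Teich geodesic) rather than a formal comparison of electrifications.
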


\begin{remark}\label{rmk.qi-->qc}
In a Gromov hyperbolic space, every quasi-isometrically embedded subspace is quasi-convex and the quasi-convexity constant depends only on the quasi-isometry constants and the hyperbolicity constant.
\end{remark}
The main object we consider in this section is the following.
\begin{definition}
The {\em co-symmetric} curve graph $\CS(S)$ is the graph obtained by coning off the family 
$\Pi:=\{\Pi(\mathcal{C}(S'))\}$ of  subspaces of symmetric curves, where we consider all finite (possibly orbifold) coverings of type $p\colon S\rightarrow S'$.
\end{definition}

\begin{proposition}
The co-symmetric curve graph $\CS(S)$ is Gromov hyperbolic and
$\mathrm{MCG}(S)$ acts isometrically on $\CS(S)$.
\end{proposition}

\begin{proof}
Given a finite covering $p\colon S\rightarrow S'$, the map $p\circ g^{-1}$ is also a finite covering for any $g\in\mathrm{MCG}(S)$.
Therefore the family $\Pi$ is setwise $\mathrm{MCG}(S)$-invariant.
Furthermore, as there is a lower bound of Euler characteristics of $2$-dimensional orbifolds, the degree of the coverings from $S$ is bounded.
Hence by Theorem \ref{thm.RS} and Remark \ref{rmk.qi-->qc}, the quasi-convexity constants of the elements in the family $\Pi$ is bounded.
Hence, we get the conclusion by Proposition \ref{prop.hyperbolic}.
\end{proof}

Note that we have not yet eliminated the possibility that $\CS(S)$ has
bounded diameter.  To discuss further properties of $\CS(S)$, the
following terminology is useful.

\begin{definition}
Let $A$ be a mapping class, a foliation, or a lamination.  Then $A$ is
said to be {\em symmetric} if it is a lift with respect to some finite
covering $p \colon S \to S'$.  If $A$ is not symmetric, it is said to
be {\em asymmetric}.
\end{definition}

In subsection \ref{section:action}, we discuss the $\mathrm{MCG}(S)$-action on $\CS(S)$.
By using Theorem \ref{thm.WPD}, we will prove the following theorem which is a detailed version of Theorem \ref{thm.main} for $\CS(S)$.
\begin{theorem}\label{thm.cosymmetric}
Let $S$ be an orientable surface of genus $g$ with $n$ punctures.
Suppose $2g + n > 4$.
For the action of $\mathrm{MCG}(S)$ on $\CS(S)$, we have
\begin{enumerate}
\item[(a)]  
$g\in\mathrm{MCG}(S)$ is loxodromic if and only if $g$ is asymmetric and pseudo-Anosov, 
\item[(b)] the action of $\mathrm{MCG}(S)$ on $\CS(S)$ is WPD.
\end{enumerate}
In particular, the action of $\mathrm{MCG}(S)$ on $\CS(S)$ is non-elementary.
\end{theorem}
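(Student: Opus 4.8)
The plan is to deduce both parts of the theorem from Theorem \ref{thm.WPD}, so that the entire argument reduces to two geometric inputs: that the $\MCG(S)$-action on $\C(S)$ is WPD (Bestvina--Fujiwara, with pseudo-Anosov maps acting loxodromically on $\C(S)$), and that the axis of an asymmetric pseudo-Anosov satisfies condition (2) of Theorem \ref{thm.WPD} with respect to the family $\Pi$ of symmetric curve subspaces. The crux of the whole theorem is this uniform projection bound, and once it is in hand part (a) and part (b) both follow quickly.

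I would first dispatch the easy direction of (a). The inclusion $\proj\colon \C(S)\to\CS(S)$ is $1$-Lipschitz, so any element with bounded orbit in $\C(S)$ has bounded orbit in $\CS(S)$ and is therefore not loxodromic. By Masur--Minsky only pseudo-Anosov elements have unbounded orbits in $\C(S)$, so a loxodromic element of $\CS(S)$ must be pseudo-Anosov. If such a $g$ were \emph{symmetric}, then $g$ is a lift of a pseudo-Anosov on some $S'$, its invariant foliations $\fol^{\pm}$ lie in $\partial Y_i$ for the corresponding subspace $Y_i=\Pi(\C(S'))$, and by quasi-convexity a quasi-geodesic axis $\alpha$ of $g$ eventually fellow-travels $Y_i$. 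The argument in the proof of Proposition \ref{prop.boundary} then shows the image of $\alpha$ in $\CS(S)$ has finite diameter, contradicting loxodromicity. Hence $g$ is asymmetric and pseudo-Anosov.

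The converse direction of (a), together with condition (2), is where the real work lies. Let $g$ be an asymmetric pseudo-Anosov with quasi-geodesic axis $\alpha$ in $\C(S)$; asymmetry means the endpoints $\fol^{\pm}=\alpha(\pm\infty)$ do not lie in $\partial\Pi=\bigcup_i\partial Y_i$. I would prove $\diam_{\C(S)}(\pi_{\alpha}(Y_i))<C$ uniformly over all $Y_i\in\Pi$ by contradiction, exactly paralleling the compression-body Corollary: since $\Pi$ has only finitely many $\MCG(S)$-orbits (covering degrees are bounded because $2$-orbifold Euler characteristic is bounded), a sequence with unbounded projections can be assumed to lie in a single orbit, and using that $g$ acts coarsely transitively on $\alpha$ one translates by powers of $g$ so that longer and longer initial segments $\alpha|[0,t_i]$ lie in a uniform neighbourhood of symmetric subspaces $g^{n_i}Y_i$. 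What is then needed is a stability statement for symmetric curve subspaces analogous to Theorem \ref{theorem:stability}: that such a sequence forces $\alpha$ into a uniform neighbourhood of a single symmetric subspace $W=\Pi(\C(S''))$, which would make $\fol^{\pm}\in\partial W$ and hence $g$ symmetric, a contradiction. Establishing this stability/rigidity result for the family $\Pi$ is the main obstacle; I expect it to be the analogue for symmetric curves of the Masur--Schleimer compression-body stability theorem, and it is presumably supplied by the machinery of subsection \ref{section:action}. Granting the uniform bound, the opening paragraph of the proof of Theorem \ref{thm.WPD} shows $\alpha$ has infinite diameter in $\CS(S)$ and that $n\mapsto g^{n}x$ is a quasi-isometric embedding, so $g$ is loxodromic on $\CS(S)$, completing (a).

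Finally, for (b) I would take any asymmetric pseudo-Anosov $g$ (these exist for $2g+n>4$, which excludes the exceptional surfaces): by the argument above its axis satisfies condition (2), and its WPD-ness on $\C(S)$ gives condition (1), so Theorem \ref{thm.WPD} yields that $g$ acts WPD on $\CS(S)$. Since $\MCG(S)$ is not virtually cyclic under the hypothesis $2g+n>4$, the existence of this single WPD loxodromic element shows the $\MCG(S)$-action on $\CS(S)$ is WPD; running the same argument for every loxodromic (equivalently, by (a), every asymmetric pseudo-Anosov) element would upgrade this to strongly WPD. The concluding non-elementary statement is then immediate from Proposition \ref{prop.non-elementary}.
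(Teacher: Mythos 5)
Your overall architecture coincides with the paper's: the easy direction of (a) (loxodromic $\Rightarrow$ pseudo-Anosov by Masur--Minsky, and symmetric $\Rightarrow$ bounded image in $\CS(S)$), the reduction of everything else to Theorem \ref{thm.WPD}, and part (b) via existence of one asymmetric pseudo-Anosov, upgrading to strongly WPD, and Proposition \ref{prop.non-elementary}. However, the crux of the theorem --- the uniform bound $\diam_{\C(S)}(\pi_\alpha(Y_i))<C$ over all symmetric subspaces, i.e.\ condition (2) of Theorem \ref{thm.WPD} --- is not proved in your proposal. You reduce it to a conjectural stability statement for symmetric subspaces ``analogous to Theorem \ref{theorem:stability}'' and then defer that statement to machinery you presume exists. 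That is a genuine gap, not a citable black box: for compression bodies the stability theorem is a hard result of Maher--Schleimer, and no analogue for symmetric subspaces appears anywhere you can point to. The paper closes this gap differently (Lemma \ref{lem.wpd-symmetric}): it invokes Masai's finiteness result (Lemma \ref{lem.finiteparallel}), which says only a uniformly bounded number of \emph{distinct} translates $g\Pi(\C(S'))$ can come $D_0$-close to both endpoints of a sufficiently long segment. Given unboundedly large projections, one centers them at a fixed $p\in\alpha$ by translating by powers of $g$ (using finiteness of covers up to conjugacy, as you do); hyperbolicity forces the subspaces to fellow-travel long segments of $\alpha$; asymmetry of the invariant foliations forces the subspaces involved to be pairwise distinct; and this contradicts the finiteness lemma. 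Your stability formulation would itself follow from that lemma, but some such finiteness input must be supplied, and your proposal supplies none.

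A second, intertwined gap: you declare that asymmetry ``means'' the endpoints $\fol^{\pm}$ of the axis avoid $\bigcup_i\partial Y_i$. That is not the definition; asymmetric means $g$ is not a lift of a pseudo-Anosov on a covered orbifold. The equivalence you assert is precisely Corollary \ref{cor.asym} of the paper (a pseudo-Anosov is symmetric if and only if one of its invariant foliations is symmetric), whose proof is nontrivial: it needs Lemma \ref{lem.finiteparallel} again, plus the result of \cite{Mas-q} that if both foliations are symmetric with respect to the \emph{same} covering then the map is a lift. The same unproven implication reappears at the end of your contradiction argument (``$\fol^{\pm}\in\partial W$ and hence $g$ symmetric''). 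Finally, and more minor: the existence of asymmetric pseudo-Anosovs for $2g+n>4$, which you assert parenthetically, is not off-the-shelf; the paper proves it via a Teichm\"uller-theoretic dimension count (Propositions \ref{prop:exceptional} and \ref{prop.asymmetric pA}).
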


We now give a characterization of $\partial\CS(S)$.
Recall that by the work of Klarreich \cite{klarreich},
the boundary of $\mathcal{C}(S)$ is identified with the space of ending laminations, denoted $\mathcal{EL}(S)$.
We call an ending lamination {\em symmetric} if 
it is a lift of an ending lamination with respect to  a finite covering $p\colon S\rightarrow S'$.
An ending lamination is called {\em asymmetric} if it is not symmetric.
Let $\mathcal{AEL}(S)$ denote the subspace of $\mathcal{EL}(S)$ 
consisting of all asymmetric ending laminations.
Then by Proposition \ref{prop.DT} and Proposition \ref{prop.boundary} we have the following.
\begin{theorem}
The Gromov boundary $\partial\CS(S)$ is contained in $\mathcal{AEL}(S)$.
\end{theorem}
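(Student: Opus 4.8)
The plan is to combine the two general boundary results established earlier in the excerpt with Klarreich's identification of $\partial\C(S)$. By Proposition \ref{prop.DT}, the Gromov boundary $\partial\CS(S)$ is homeomorphic to $\partial_Z(X)$, where $X = \C(S)$, $Z = \CS(S)$, and $Y = \Pi = \{\Pi(\C(S'))\}$ is the family of symmetric curve subspaces. By Proposition \ref{prop.boundary}, we have the inclusion
$$\partial_Z(X) \subseteq \partial X \setminus \partial Y = \partial\C(S) \setminus \bigcup_i \partial\,\Pi(\C(S')).$$
So the heart of the argument reduces to identifying this set-theoretic difference inside $\partial\C(S)$.

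First I would invoke Klarreich's theorem to identify $\partial\C(S) = \EL(S)$, the space of ending laminations. The key remaining point is to match the boundary $\partial Y$ of the symmetric curve subspaces with the symmetric ending laminations $\EL(S) \setminus \AEL(S)$. Concretely, I want to show that $\bigcup_i \partial\,\Pi(\C(S'))$ is exactly the set of symmetric ending laminations. Fix a finite covering $p \colon S \to S'$. By Theorem \ref{thm.RS} the map $\Pi \colon \C(S') \to \C(S)$ is a quasi-isometric embedding, so it extends to a topological embedding of boundaries $\partial\C(S') \hookrightarrow \partial\C(S)$, and the limit set $\partial\,\Pi(\C(S'))$ is precisely the image of $\partial\C(S') = \EL(S')$ under this boundary map. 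By the definition of symmetric ending lamination (a lift of an ending lamination on some $S'$), this image is exactly the set of ending laminations that are symmetric with respect to $p$. Taking the union over all finite coverings $p \colon S \to S'$ then gives
$$\partial Y = \bigcup_i \partial\,\Pi(\C(S')) = \EL(S) \setminus \AEL(S),$$
the full set of symmetric ending laminations. Combining this with the inclusion above yields $\partial_Z(X) \subseteq \EL(S) \setminus (\EL(S) \setminus \AEL(S)) = \AEL(S)$, and via the homeomorphism of Proposition \ref{prop.DT} we conclude $\partial\CS(S) \subseteq \AEL(S)$.

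The main obstacle is the step identifying the limit set $\partial\,\Pi(\C(S'))$ of a quasi-isometrically embedded subspace with the boundary image of $\Pi$, and in turn with the symmetric ending laminations. One must check that a quasi-isometric embedding of Gromov hyperbolic spaces induces a well-defined injection on Gromov boundaries whose image coincides with the limit set of the subspace; this is standard, but it requires that the boundary points be read off correctly through Klarreich's identification, rather than just as equivalence classes of quasi-geodesic rays. I would lean on Remark \ref{rmk.qi-->qc} (quasi-isometric embeddings are quasi-convex) together with the fact that a quasi-convex subspace's limit set is carried by the extended boundary map, so that the lamination-level and space-level descriptions agree. The remaining verifications—that the degree bound keeps the family uniform and that the union over all coverings captures every symmetric lamination—are routine given the definitions already in place.
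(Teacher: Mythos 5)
Your proposal is correct and takes essentially the same route as the paper, which deduces the theorem directly from Proposition \ref{prop.DT} and Proposition \ref{prop.boundary} together with Klarreich's identification $\partial\C(S)=\EL(S)$. The only difference is that you make explicit the identification of $\partial Y=\bigcup_i \partial\,\Pi(\mathcal{C}(S'))$ with the set of symmetric ending laminations (via the boundary extension of the quasi-isometric embedding $\Pi$), a step the paper leaves implicit in its definition of symmetric ending laminations.
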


In \cite{farb-mosher}, Farb-Mosher defines the notion of convex cocompact subgroups of $\mathrm{MCG}(S)$ 
as an analogue of convex cocompact Kleinian groups.
Instead of giving the definition, we regard the following theorem due to Kent-Leininger as a definition of convex cocompactness.
\begin{theorem}[{\cite[Section 7]{kent-leininger}}]
A finitely generated subgroup $H<\mathrm{MCG}(S)$ is convex cocompact if and only if $H$ qi-embeds into $\mathcal{C}(S)$.
\end{theorem}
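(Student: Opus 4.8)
The plan is to pass through Teichm\"uller space. Recall the Farb--Mosher definition: a finitely generated $H < \MCG(S)$ is convex cocompact if and only if some (equivalently, every) orbit $H \cdot x$ in the Teichm\"uller space $\T(S)$, with the Teichm\"uller metric, is quasiconvex; equivalently, $H$ acts cocompactly on the weak hull $W$ in $\T(S)$ of its limit set $\Lambda_H \subset \PMF(S)$. The bridge to $\C(S)$ is the systole map $\pi \from \T(S) \to \C(S)$ sending a point to one of its shortest curves. This map is coarsely Lipschitz and, by Masur--Minsky, carries Teichm\"uller geodesics to unparametrized quasi-geodesics; more precisely it descends to a quasi-isometry from the \emph{electrified} Teichm\"uller space $\T_{\mathrm{el}}(S)$, obtained by coning off the regions where some curve is shorter than a fixed Margulis constant, onto $\C(S)$. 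The crucial consequence is that on the $\epsilon$-thick part of $\T(S)$, where the Teichm\"uller and electrified metrics agree coarsely, $\pi$ is a genuine quasi-isometric embedding.

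For the forward direction, suppose $H$ is convex cocompact, so $H$ acts cocompactly on the weak hull $W$. First I would observe that $W$ is uniformly thick: the systole function on $\T(S)$ is $\MCG(S)$-invariant, and a compact fundamental domain for $H \curvearrowright W$ lies in the $\epsilon$-thick part for some $\epsilon > 0$, so its whole $H$-orbit, namely $W$, is $\epsilon$-thick. Since $W$ is quasiconvex and thick, the restriction of $\pi$ to $W$ is a quasi-isometric embedding into $\C(S)$; composing with the orbit inclusion $H \cdot x \subset W$ shows that $h \mapsto \pi(hx)$ is a quasi-isometric embedding, i.e. $H$ qi-embeds into $\C(S)$.

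For the converse, suppose $H \to \C(S)$ is a quasi-isometric embedding. Then every infinite-order $h \in H$ acts loxodromically on $\C(S)$ and hence is pseudo-Anosov (Masur--Minsky), so by Klarreich's identification $\partial \C(S) = \EL(S)$ the limit set $\Lambda_H$ consists of filling ending laminations. The task is to lift this to a thick, quasiconvex, cocompact hull in $\T(S)$. Via $\T_{\mathrm{el}}(S) \simeq \C(S)$ one gets quasiconvexity of $H \cdot x$ in the electrified metric for free; the heart of the matter --- and the step I expect to be the main obstacle --- is to upgrade this to quasiconvexity in the genuine Teichm\"uller metric by showing the hull stays boundedly close to the thick part. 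Concretely one must prevent deep thin-part excursions: a bounded-length curve $\gamma$ along a Teichm\"uller geodesic forces a large subsurface (in particular annular) projection at $\gamma$, and by the Masur--Minsky and Rafi machinery such a large projection to a proper subsurface would make the $\C(S)$-image backtrack or rest, contradicting the linear progress guaranteed by the quasi-isometric embedding. Bounding all subsurface projections this way gives thickness, whence the limit laminations are uniquely ergodic and the hull is thick; cocompactness of $H \curvearrowright W$ then follows from quasiconvexity together with finite generation of $H$ and proper discontinuity of the $\MCG(S)$-action on $\T(S)$. Establishing this thickness/unique-ergodicity dichotomy is the crux, and once it is in place both implications close up.
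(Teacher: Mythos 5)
The paper contains no proof of this statement to compare against: it is quoted from Kent--Leininger and explicitly adopted as the working \emph{definition} of convex cocompactness, so your proposal must stand on its own as a reconstruction of the cited argument. Your overall architecture (pass through $\T(S)$ via the systole map, control thin-part excursions with Masur--Minsky/Rafi technology) is the right family of ideas, but there are two genuine problems. First, the forward direction rests on a false claim: it is not true that the Teichm\"uller and electrified metrics ``agree coarsely'' on the $\epsilon$-thick part, nor that the systole map is a quasi-isometric embedding there. Take a thick point $X$ and a bounded-length curve $\gamma$ on it: the points $X$ and $T_\gamma^n X$ all lie in the same thick part, and $d_{\T(S)}(X, T_\gamma^n X)\to\infty$ with $n$ (properness of the $\MCG(S)$-action), while their systoles differ by $T_\gamma^n$ and so stay at distance at most $2\,d_{\C(S)}(\mathrm{sys}(X),\gamma)$ in $\C(S)$, independent of $n$; equivalently, electrified geodesics between thick points may shortcut through the coned-off thin regions. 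Indeed the thick part is quasi-isometric to $\MCG(S)$ (Mumford compactness and Milnor--\v{S}varc), which is not quasi-isometric to $\C(S)$. What the argument actually needs is a statement about \emph{geodesics}: a Teichm\"uller geodesic that stays $\epsilon$-thick shadows to a \emph{parametrized} quasi-geodesic of $\C(S)$ with constants depending only on $\epsilon$ (Masur--Minsky). Applied to geodesics of the weak hull $W$, which are uniformly thick by cocompactness, this repairs your forward direction; as written, the justification fails.

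Second, and more seriously, the converse --- which you yourself flag as the crux --- is where the proposal stops being a proof. The mechanism you propose does not work as stated: a large subsurface projection $d_Y(h_1x,h_2x)$ forces any $\C(S)$-geodesic between the orbit points to pass near $\partial Y$ (bounded geodesic image), but a geodesic, or the parametrized quasi-geodesic orbit path, is entirely free to do that. ``Resting'' is a property of the unparametrized shadow of the Teichm\"uller geodesic, whose parametrization is not tied to the word metric on $H$, so no contradiction with the linear progress of the orbit map arises at this level of generality; converting thin-part excursions (equivalently, by Rafi, large projections) into a contradiction is precisely the content of Kent--Leininger's Section 7 and requires their limiting/compactness arguments together with Klarreich's theorem and Masur's unique-ergodicity criterion. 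Likewise, cocompactness of $H$ on the weak hull does not ``follow from quasiconvexity together with finite generation and proper discontinuity'': fibers of the systole map have unbounded diameter, so closeness in $\C(S)$ gives no closeness in $\T(S)$ without thickness plus a further limiting argument. In short, the easy direction is salvageable once the false lemma is replaced, but the hard direction has a genuine gap exactly at the step the theorem exists to settle.
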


We call a subgroup $H<\mathrm{MCG}(S)$ {\em purely asymmetric} if every element in $H$ of infinite order is asymmetric.
This is equivalent to saying every element in $H$ of infinite order does not preserve any symmetric subspaces.
Then by Theorem \ref{thm.qi-embed},
we have:
\begin{theorem}\label{thm.asymmetric}
Let $S$ be as in Theorem \ref{thm.cosymmetric}.
Let $H$ be a finitely generated subgroup of $\mathrm{MCG}(S)$.
Then 
\begin{enumerate}
\item[(a)] $H$ is convex cocompact with every element in the limit set asymmetric in $\mathcal{C}(S)$
if and only if $H$ qi-embeds into $\CS(S)$.
\item[(b)] If $H$ satisfies the equivalent condition in (a),  then $H$ is purely asymmetric.
\end{enumerate}
\end{theorem}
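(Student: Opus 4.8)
The plan is to deduce Theorem \ref{thm.asymmetric} from the abstract Theorem \ref{thm.qi-embed} by specializing to $X = \C(S)$, the family $Y = \Pi$ of symmetric curve subspaces, $Z = \CS(S)$, and $G = \MCG(S)$. The hypotheses of Theorem \ref{thm.qi-embed} hold in this setting, since we have already shown that $\Pi$ is an $\MCG(S)$-equivariant family of uniformly quasiconvex subsets of $\C(S)$. Under this dictionary, the theorem of Kent-Leininger cited above identifies ``$H$ is convex cocompact'' with ``$H$ qi-embeds into $\C(S)$''. It therefore remains only to match the two limit-set conditions.

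First I would translate the boundary data. By Klarreich's theorem $\partial\C(S)$ is identified with $\EL(S)$, and the key step is to show that $\partial Y = \bigcup_{p}\partial\Pi(\C(S'))$ is precisely the set of symmetric ending laminations, so that $\partial\C(S)\setminus\partial Y = \AEL(S)$. Since each $\Pi\colon\C(S')\to\C(S)$ is a quasi-isometric embedding by Theorem \ref{thm.RS}, it induces a topological embedding of $\partial\C(S')=\EL(S')$ onto $\partial\Pi(\C(S'))$, and I would check that this boundary embedding is the restriction of the lift map on ending laminations. Consequently $\partial\Pi(\C(S'))$ consists of exactly those ending laminations on $S$ that are lifts of ending laminations on $S'$, that is, the laminations symmetric with respect to $p$; taking the union over all finite (orbifold) covers $p\colon S\to S'$ yields all symmetric ending laminations, as desired.

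With this identification, the hypothesis ``every element of the limit set of $H$ is asymmetric'' is exactly the hypothesis ``the limit set of $H$ in $\C(S)$ is contained in $\partial\C(S)\setminus\partial Y$'' of Theorem \ref{thm.qi-embed}, so part (a) follows immediately from Theorem \ref{thm.qi-embed}(a). For part (b), recall we have already observed that $H$ is purely asymmetric precisely when no infinite-order element of $H$ preserves a symmetric subspace setwise; this is the conclusion of Theorem \ref{thm.qi-embed}(b), and so (b) follows at once. I expect the main obstacle to be the boundary lemma of the second paragraph: verifying that the quasi-isometric embedding $\Pi$ is compatible with Klarreich's identification, so that a quasi-geodesic ray in $\C(S')$ limiting to $\lambda'\in\EL(S')$ is carried by $\Pi$ to a ray in $\C(S)$ limiting to the lifted lamination, and that no spurious boundary points are created. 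Once this compatibility is established, the remainder is a direct appeal to the already-proven abstract results.
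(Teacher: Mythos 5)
Your proposal is correct and takes essentially the same approach as the paper: there, Theorem \ref{thm.asymmetric} is obtained by directly specializing Theorem \ref{thm.qi-embed} to $X=\mathcal{C}(S)$, $Y=\Pi$, $Z=\CS(S)$, $G=\mathrm{MCG}(S)$, combined with the Kent--Leininger characterization of convex cocompactness and Klarreich's identification of $\partial\mathcal{C}(S)$ with $\mathcal{EL}(S)$. The boundary compatibility you flag as the main obstacle (that $\partial Y=\bigcup_{p}\partial\Pi(\mathcal{C}(S'))$ is exactly the set of symmetric ending laminations) is precisely the identification the paper uses implicitly without further proof.
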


For any group with non-elementary action by isometries on a Gromov
hyperbolic space $X$, Taylor-Tiozzo \cite{TT} proved that random
subgroups given by independent random walks are qi-embedded free
groups.  Putting Theorem \ref{thm.cosymmetric}, Theorem
\ref{thm.asymmetric} and \cite{TT} together we get the following.
\begin{corollary}
There are infinitely many purely asymmetric subgroups in $\mathrm{MCG}(S)$.
\end{corollary}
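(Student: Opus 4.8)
The plan is to combine the three cited inputs in a short chain, so the main content is just assembling them correctly. First I would recall that Theorem \ref{thm.cosymmetric} gives a non-elementary action of $\mathrm{MCG}(S)$ on the Gromov hyperbolic space $\CS(S)$ (this uses $2g+n>4$, and non-elementarity follows from the WPD conclusion via Proposition \ref{prop.non-elementary}). With a genuinely non-elementary action in hand, I would invoke the Taylor--Tiozzo result \cite{TT}: for a group acting non-elementarily by isometries on a Gromov hyperbolic space, two (or more) independent random walks generate, with asymptotic probability one, a free group of the corresponding rank which is quasi-isometrically embedded in $X=\CS(S)$ via its orbit map. In particular such qi-embedded free subgroups exist, and one may realize infinitely many of them.

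The second step is to upgrade ``qi-embedded in $\CS(S)$'' to the desired algebraic property using Theorem \ref{thm.asymmetric}. Let $H<\mathrm{MCG}(S)$ be one of the free subgroups produced above, so that $H$ qi-embeds into $\CS(S)$. By part (b) of Theorem \ref{thm.asymmetric}, $H$ is purely asymmetric, meaning every infinite order element of $H$ is asymmetric. Since $H$ is free of rank at least two, it has (infinitely many) elements of infinite order, so this conclusion is non-vacuous and $H$ is exactly the kind of subgroup we want.

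Finally I would argue that there are genuinely infinitely many such subgroups, rather than a single one counted repeatedly. The cleanest route is to note that the Taylor--Tiozzo construction yields, for each choice of seed measures or for each independent trial, subgroups that can be taken arbitrarily ``large'' in $\CS(S)$: for instance, by running the random walk longer one obtains generators whose translation lengths in $\CS(S)$ (equivalently, whose orbit diameters before closing up) grow without bound, so the resulting qi-embedded free groups cannot all coincide, nor even fall into finitely many conjugacy or commensurability classes. Alternatively, one can simply observe that infinitely many distinct free subgroups qi-embed since a non-elementary WPD action supplies infinitely many independent loxodromic elements with distinct fixed-point pairs in $\partial\CS(S)\subset\AEL(S)$, and ping-pong on disjoint such pairs produces infinitely many distinct purely asymmetric free subgroups.

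The step I expect to be the main obstacle is the last one: making the word \emph{infinitely many} precise. The existence and purely-asymmetric property are immediate from \cite{TT} together with Theorem \ref{thm.asymmetric}, but distinguishing the subgroups requires an invariant that separates them. I would favor the translation-length/loxodromic-fixed-point argument because it needs only the non-elementarity of the action (already established) and the ping-pong lemma, and it sidesteps any delicate probabilistic bookkeeping about how independent random walks generate distinct groups.
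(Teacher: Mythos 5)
Your proposal is correct and follows essentially the same route as the paper, whose entire ``proof'' is the remark that Theorem \ref{thm.cosymmetric} (non-elementarity of the action on $\CS(S)$), the Taylor--Tiozzo theorem \cite{TT} (independent random walks give qi-embedded free subgroups), and Theorem \ref{thm.asymmetric}(b) (qi-embedded subgroups of $\CS(S)$ are purely asymmetric) combine to give the corollary. Your additional care in producing \emph{infinitely many distinct} such subgroups (via growing translation lengths, or ping-pong on loxodromics with distinct fixed-point pairs in $\partial\CS(S)$) fills in a point the paper leaves implicit, and is sound.
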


\subsection{The $\MCG(S)$ action on $\CS(S)$ is WPD} \label{section:action}

In this subsection, we prove Theorem \ref{thm.cosymmetric}.
First, we prepare the following.
\begin{lemma}\label{lem.wpd-symmetric}
Let $g\in\mathrm{MCG}(S)$ be an asymmetric pseudo-Anosov element with quasi-geodesic axis $\alpha$.
We denote by $\pi_{\alpha}$ the nearest projection map to $\alpha$ on $\mathcal{C}(S)$.
Then there exists $C>0$ such that for any finite covering $p\colon S\rightarrow S'$ and the corresponding symmetric subspace $\Pi(\mathcal{C}(S'))$,
we have $\mathrm{diam}_{\mathcal{C}}(\pi_{\alpha}(\Pi(\mathcal{C}(S'))))<C$, where $\diam_{\mathcal{C}}$ is the diameter in $\mathcal{C}(S)$.
\end{lemma}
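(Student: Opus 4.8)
The plan is to prove Lemma \ref{lem.wpd-symmetric} by contradiction, extracting from a hypothetical failure a symmetry of the pseudo-Anosov axis itself, thereby contradicting the assumption that $g$ is asymmetric. So suppose no such uniform constant $C$ exists. Since the degree of any covering $p \colon S \to S'$ is bounded (by the lower bound on Euler characteristics of $2$-orbifolds, as noted in the hyperbolicity proof for $\CS(S)$), there are only finitely many combinatorial types of covering. Hence a failure must come from a single infinite sequence of coverings $p_i \colon S \to S_i'$ for which $\diam_{\mathcal{C}}(\pi_\alpha(\Pi_i(\mathcal{C}(S_i')))) \to \infty$, and by passing to a subsequence we may assume all the $p_i$ share the same degree and topological type.

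The key mechanism is the interplay between quasi-convexity and nearest-point projection in a Gromov hyperbolic space. Each symmetric subspace $\Pi_i(\mathcal{C}(S_i'))$ is quasi-convex with a \emph{uniform} constant, by Theorem \ref{thm.RS} and Remark \ref{rmk.qi-->qc}. The standard fact I would invoke is that in a $\delta$-hyperbolic space, if the nearest-point projection of a uniformly quasi-convex set $\Pi_i$ onto the quasi-geodesic $\alpha$ has large diameter, then a long subsegment of $\alpha$ must fellow-travel $\Pi_i$ within a uniform constant. Thus, after translating by suitable powers $g^{n_i}$ (using that $g$ acts coarsely transitively on its own axis $\alpha$), I obtain longer and longer segments $\alpha|[0, t_i]$, with $t_i \to \infty$, each lying in a uniform neighborhood of a symmetric subspace $\Pi_i(\mathcal{C}(S_i'))$ of fixed type. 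This is exactly parallel to the argument in the compression body corollary, where Theorem \ref{theorem:stability} plays the analogous stabilizing role.

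The final step is a limiting/stability argument: from the fact that arbitrarily long segments of $\alpha$ lie uniformly close to symmetric subspaces of a single fixed covering type, I must conclude that the \emph{whole} axis $\alpha$ — equivalently, the stable and unstable laminations of $g$ — is symmetric with respect to some finite covering. Since $\alpha$ is the axis of the pseudo-Anosov $g$, its endpoints in $\partial\mathcal{C}(S) = \mathcal{EL}(S)$ are the invariant ending laminations of $g$; showing these are lifts of laminations under a covering $S \to S'$ exhibits $g$ itself as symmetric, contradicting the hypothesis. I expect this extraction of a global symmetry from the approximate local symmetries to be the main obstacle. The difficulty is that each $\Pi_i$ may a priori correspond to a \emph{different} covering, even within a fixed type, so one needs to promote ``infinitely many long segments near symmetric sets of the same type'' into ``a single covering whose symmetric set contains all of $\alpha$.'' I would handle this by a compactness argument: the finitely many covering types together with a diagonal/subsequence extraction should force a single limiting covering $p \colon S \to S'$ whose symmetric subspace $\Pi(\mathcal{C}(S'))$ coarsely contains $\alpha$, so that the endpoints of $\alpha$ lie in the limit set $\partial(\Pi(\mathcal{C}(S')))$ and are therefore symmetric ending laminations.
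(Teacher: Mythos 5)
Your setup matches the paper's own proof up to its midpoint: argue by contradiction, use finiteness of covering types to reduce to a sequence of translates $g_n\Pi(\mathcal{C}(S'))$ of a single fixed symmetric subspace, translate by powers of $g$ so the projections are coarsely centered at a fixed point of $\alpha$, and use hyperbolicity plus uniform quasi-convexity (Theorem \ref{thm.RS} and Remark \ref{rmk.qi-->qc}) to convert large projection diameter into long fellow-traveling segments. All of that is sound and is exactly what the paper does.

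The gap is precisely where you flagged the ``main obstacle'': the concluding compactness/diagonal extraction is not an available argument. The symmetric subspaces of a fixed type form a countably infinite, discrete orbit of $\MCG(S)$; a sequence of pairwise distinct translates, each fellow-traveling a longer and longer segment of $\alpha$, need not converge to any ``limiting covering,'' and no soft argument forces infinitely many of them to coincide. What replaces compactness in the paper is Lemma \ref{lem.finiteparallel} (from \cite{Mas}): for $a,b\in\mathcal{C}(S)$ with $d_{\mathcal{C}}(a,b)>D_1$, at most $D_2$ \emph{distinct} translates $g\Pi(\mathcal{C}(S'))$ come $D_0$-close to both. With this genuinely geometric finiteness result the proof closes as a dichotomy: if the centered sequence $\Pi_n$ contains infinitely many distinct translates, they all pass close to two fixed far-apart points of $\alpha$, contradicting Lemma \ref{lem.finiteparallel}; if only finitely many, then a single translate has infinite projection diameter, hence fellow-travels all of $\alpha$, so the endpoints of $\alpha$ --- the invariant foliations of $g$ --- lie in its limit set and are symmetric, contradicting asymmetry of $g$ via Corollary \ref{cor.asym}. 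That last implication is your second, smaller gap: you assert that exhibiting the invariant laminations as lifts ``exhibits $g$ itself as symmetric,'' but a lift of the lamination does not automatically come with a lift of the map. This is exactly the content of Corollary \ref{cor.asym}, whose proof in the paper itself requires Lemma \ref{lem.finiteparallel} together with the result of \cite{Mas-q} that a pseudo-Anosov whose stable and unstable foliations are both symmetric with respect to the \emph{same} covering is symmetric. So the two nontrivial ingredients your sketch treats as formal steps are in fact the heart of the lemma.
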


A key ingredient of a proof of Lemma \ref{lem.wpd-symmetric} is the
uniform finiteness of the number of parallel symmetric subspaces,
which is proved in \cite{Mas}.

\begin{lemma}[{\cite[Lemma 4.5.]{Mas}}]\label{lem.finiteparallel}
Let $p\colon S\rightarrow S'$ be a finite covering and denote the
corresponding symmetric subspace by $\Pi(\mathcal{C}(S'))$.  Then for
any $D_0>0$, there exist $D_1, D_2>0$ which depend only on $S$ and
$D_0$ such that for any $a,b\in \mathcal{C}(S)$ with
$d_\mathcal{C}(a,b)>D_1$, the number of distinct elements in
$$\{g\Pi(\mathcal{C}(S'))\mid
d_\mathcal{C}(a,g\Pi(\mathcal{C}(S')))<D_0 \text{ and }
d_\mathcal{C}(b,g\Pi(\mathcal{C}(S')))<D_0\}$$
is bounded from above by $D_{2}$.  Here we count the number of
distinct images i.e. if
$g_{1}\Pi(\mathcal{C}(S')) = g_{2}\Pi(\mathcal{C}(S'))$ as subsets, we
just count one time.
\end{lemma}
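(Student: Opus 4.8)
The plan is to convert the hypothesis ``$g\Pi(\C(S'))$ comes within $D_0$ of both $a$ and $b$, where $d_\C(a,b) > D_1$'' into a statement about finite symmetry groups that coarsely fix a long geodesic segment, and then to apply acylindricity of the $\MCG(S)$-action on $\C(S)$ to bound the number of such groups.

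First I would carry out a fellow-traveling reduction. Since the degree of coverings of $S$ is bounded, Theorem~\ref{thm.RS} and Remark~\ref{rmk.qi-->qc} give a single quasi-convexity constant valid for \emph{every} symmetric subspace. Fix the hyperbolicity constant $\delta$ of $\C(S)$. Then, exactly as in the standard fact recalled in the proof of Theorem~\ref{thm.WPD}, there are constants depending only on $S$ and $D_0$ so that whenever $d_\C(a,b) > D_1$ and a translate $g\Pi(\C(S'))$ meets both the $D_0$-ball about $a$ and the $D_0$-ball about $b$, a geodesic $[a,b]$ lies in a uniform neighbourhood of $g\Pi(\C(S'))$ along a central subsegment whose length grows with $D_1$. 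Choosing interior points $a',b'$ on $[a,b]$ with $d_\C(a',b')$ still large, I obtain for each such translate curves $a_g,b_g \in g\Pi(\C(S'))$ with $a_g$ near $a'$ and $b_g$ near $b'$.

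The core step realises each symmetric subspace as coarsely contained in the fixed set of a finite symmetry group. For a regular cover $p\colon S \to S' = S/\Gamma$ the deck group $\Gamma$ acts on $\C(S)$ by isometries, and any symmetric curve $a$ has $\Gamma$-orbit a multicurve, so $d_\C(a,\gamma a) \le 1$ for every $\gamma \in \Gamma$; hence $\Pi(\C(S'))$ lies in the coarse fixed set of $\Gamma$, and this coarse fixed set is, up to bounded Hausdorff distance, the subspace itself. Consequently, if $g_1\Pi(\C(S')),\dots,g_m\Pi(\C(S'))$ are distinct translates all fellow-travelling $[a,b]$, then for each $i$ every nontrivial element of the conjugate $g_i\Gamma g_i^{-1}$ moves both $a'$ and $b'$ a uniformly bounded distance (bounded in terms of $D_0$, $\delta$, and the quasi-convexity constant). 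By Bowditch's acylindricity theorem for the $\MCG(S)$-action on $\C(S)$, once $d_\C(a',b')$ exceeds the acylindricity threshold there are at most $N$ mapping classes with this displacement property, where $N$ depends only on $S$ and $D_0$. Since $|\Gamma| = \deg p$ is bounded, the number of distinct subgroups $g_i\Gamma g_i^{-1}$ drawn from these $\le N$ elements is bounded, and so is the number of distinct subspaces; this yields $D_2 = D_2(S,D_0)$.

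The main obstacle is twofold, and both difficulties live in the core step. First, passing from a bound on symmetry groups to a bound on symmetric subspaces requires that the symmetry group coarsely determine the subspace up to bounded multiplicity; I must rule out many distinct translates sharing one coarse fixed set, that is, control distinct-but-parallel subspaces lying within bounded Hausdorff distance of each other, which is precisely where the qi-embedding of Theorem~\ref{thm.RS} provides the needed rigidity along the long fellow-travelled segment. Second, and more seriously, a general cover $p\colon S \to S'$ need not be regular, so there is no finite group acting on $S$ whose fixed set is $\Pi(\C(S'))$. I would handle this by passing to the Galois (regular) closure of $p$ and tracking the intermediate covering data, equivalently by replacing the deck group with the finite group of liftable symmetries of the closure; the content is that a bounded-degree covering is coarsely determined by boundedly much data along a long symmetric segment, so that the acylindricity count survives. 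Making this reduction uniform across all covers of bounded degree is the technical heart of the argument.
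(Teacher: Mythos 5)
You should first note that this paper does not prove Lemma~\ref{lem.finiteparallel} at all: it is imported wholesale from \cite{Mas}*{Lemma 4.5}, so the comparison is against Masai's external proof rather than anything in this text. On its own merits, the core of your argument is sound \emph{for regular covers}: the fellow-travelling reduction is standard; the deck group $\Gamma$ of a regular cover embeds in $\MCG(S)$ and moves every curve of $\Pi(\C(S'))$ distance at most $1$, so every nontrivial element of $g_i\Gamma g_i^{-1}$ has uniformly small displacement at two far-apart points $a',b'$ on $[a,b]$; Bowditch's acylindricity theorem then bounds the number of such mapping classes by some $N$, and since the subgroup $g_i\Gamma g_i^{-1} < \MCG(S)$ determines the translate $g_i\Pi(\C(S'))$ (a curve is symmetric if and only if its orbit under the deck group, acting as mapping classes, is a multicurve), one gets at most $2^N$ distinct translates. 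That is a correct and genuinely different mechanism: it trades Masai's argument for acylindricity of the $\MCG(S)$-action on $\C(S)$, a heavier input but a shorter path.

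The genuine gap is the one you flag and then defer: irregular covers, which are not a corner case here --- the paper explicitly stresses that asymmetry is asymmetry with respect to covers that ``need not be regular,'' and the lemma is applied in exactly that generality. For an irregular cover there is \emph{no} nontrivial finite subgroup of $\MCG(S)$ attached to $p$, so a displacement/acylindricity argument in $\C(S)$ has nothing to latch onto, and your proposed repair via the Galois closure fails concretely. If $\hat q\colon \widehat S \to S$ is the closure of $p$, then the closure of the cover $p\circ g^{-1}$ (which defines the translate $g\Pi(\C(S'))$) is the \emph{same} surface $\widehat S$ with covering map $g\circ\hat q$, and the composed regular cover is the \emph{same} map $p\circ\hat q$ with the \emph{same} deck group $\widehat\Gamma < \MCG(\widehat S)$ and the same intermediate subgroup $\mathrm{Deck}(\widehat S/S)$. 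In other words, the Galois-closure data is literally identical for every translate, so it cannot count them; the differences between translates are absorbed into the identification $g\circ\hat q$ of $\widehat S/\mathrm{Deck}(\widehat S/S)$ with $S$. To make the deck groups vary you would instead fix $\hat q$ and conjugate $\widehat\Gamma$ by a lift of $g$, but a general $g\in\MCG(S)$ does not preserve $\hat q_*\pi_1(\widehat S)$ up to conjugacy and so has no lift to $\widehat S$. Thus your argument, as it stands, proves the lemma only for regular covers, and the ``technical heart'' you postpone is not a uniformity issue but a missing idea.
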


We need the following corollary to Lemma \ref{lem.finiteparallel} to
prove Lemma \ref{lem.wpd-symmetric}.  This corollary may be of
independent interest.

\begin{corollary}\label{cor.asym}
Let $g\in\mathrm{MCG}(S)$ be pseudo-Anosov.
Then $g$ is symmetric if and only if one of its stable or unstable foliation is symmetric.
\end{corollary}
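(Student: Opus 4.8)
The plan is to prove the two implications separately; the forward one is routine and the reverse one is where the work lies. For the forward direction, if $g$ is symmetric, say $g$ is a lift of a pseudo-Anosov $g'$ on $S'$ through a finite covering $p\colon S\to S'$, then the invariant foliations of $g'$ pull back under $p$ to the invariant foliations $\fol^{s},\fol^{u}$ of $g$, so both are lifts, hence symmetric, and in particular at least one is symmetric. The content is therefore the reverse implication, and since the roles of $\fol^{s}$ and $\fol^{u}$ are interchangeable I would assume that the stable foliation $\fol^{s}$ is symmetric with respect to some finite covering $p\colon S\to S'$ and try to conclude that $g$ itself is symmetric.

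To set up the geometry, write $Y_{0}:=\Pi(\C(S'))$ for the corresponding symmetric subspace. Since $Y_{0}$ is quasi-isometrically embedded by Theorem \ref{thm.RS}, its limit set in $\partial\C(S)=\EL(S)$ is the image of $\partial\C(S')$, namely the ending laminations symmetric with respect to $p$; hence $\fol^{s}$ lies in the limit set of $Y_{0}$. I would view $\fol^{s}$ as an endpoint of the quasi-geodesic axis $\alpha$ of $g$, oriented so that $\fol^{s}$ is the repelling fixed point. Because the family of symmetric subspaces is $\MCG(S)$-invariant, each translate $g^{n}Y_{0}=g^{n}\Pi(\C(S'))$ is again a symmetric subspace, and since $g$ fixes $\fol^{s}$ we have $\fol^{s}\in\partial(g^{n}Y_{0})$ for every $n$.

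The key geometric step is to feed these translates into Lemma \ref{lem.finiteparallel}. Using quasi-convexity, $Y_{0}$ fellow-travels the ray of $\alpha$ converging to $\fol^{s}$; applying the powers $g^{n}$, which translate this ray so that the tracked sub-ray sweeps out longer and longer initial segments of $\alpha$, shows that for all sufficiently large $n$ the subspace $g^{n}Y_{0}$ passes within a uniform distance $D_{0}$ of a single fixed pair of points $a,b\in\alpha$ with $d_{\C}(a,b)>D_{1}$. As each $g^{n}Y_{0}$ is a translate of $\Pi(\C(S'))$, Lemma \ref{lem.finiteparallel} bounds the number of distinct such subspaces by $D_{2}$, so by the pigeonhole principle $g^{n_{1}}Y_{0}=g^{n_{2}}Y_{0}$ as subsets for some $n_{1}<n_{2}$; setting $m=n_{2}-n_{1}$ gives $g^{m}Y_{0}=Y_{0}$, so the infinite-order element $g^{m}$ stabilizes $\Pi(\C(S'))$ setwise.

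The remaining step, upgrading this to symmetry of $g$ itself, is the one I expect to be the main obstacle. First one must identify the setwise stabilizer of $\Pi(\C(S'))$ with the group of lifts for $p$, so that $g^{m}$ stabilizing $Y_{0}$ forces $g^{m}$ to be symmetric. Second, and more delicately, one must pass from the power $g^{m}$ back to $g$: knowing $g^{m}$ is symmetric makes both foliations of $g$ symmetric, and the real task is to promote the pair of transverse symmetric foliations of $g$, together with the stabilizing power, to an honest lift structure for $g$, possibly after replacing $p$ by a related covering making the data $g$-equivariant. This descent-of-roots along a finite (possibly irregular or branched) covering is where the covering-space bookkeeping is genuinely needed, whereas the preceding hyperbolic-geometry input is comparatively routine.
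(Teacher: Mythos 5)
Your forward direction and the core geometric argument are exactly the paper's: both you and the paper use quasi-convexity plus Lemma \ref{lem.finiteparallel} and the pigeonhole principle to produce $m>0$ with $g^{m}\Pi(\C(S'))=\Pi(\C(S'))$. But the proof is not complete, and the gap is precisely the step you flag as the ``main obstacle.'' Your plan for closing it --- identify the setwise stabilizer of $\Pi(\C(S'))$ with the group of lifts so that $g^{m}$ is symmetric, then descend from $g^{m}$ symmetric to $g$ symmetric --- consists of two unproven claims, and neither is routine. The stabilizer identification is not established anywhere in the paper, and the descent step is genuinely dangerous: ``$g^{m}$ symmetric $\Rightarrow g$ symmetric'' is a statement of essentially the same depth as the corollary itself (the natural way to deduce it is to note that the foliations of $g^{m}$ equal those of $g$ and then invoke the corollary --- which is circular). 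So as written the argument does not close.

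The paper avoids both claims entirely. From $g^{m}\Pi(\C(S'))=\Pi(\C(S'))$ it extracts only the immediate consequence that the attracting fixed point $\lambda_{+}(g)$ lies in $\partial\Pi(\C(S'))$: take any $a\in\Pi(\C(S'))$, then $g^{km}(a)\in\Pi(\C(S'))$ for all $k$ and $g^{km}(a)\to\lambda_{+}(g)$. Hence \emph{both} invariant foliations are symmetric with respect to the \emph{same} covering $p$. The missing ingredient that finishes the proof is then an external result, \cite{Mas-q}*{Section 3}: if both the stable and unstable foliations of a pseudo-Anosov map are symmetric with respect to the same finite covering, then the map itself is symmetric. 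This is exactly the ``promote the pair of transverse symmetric foliations to an honest lift structure'' step you correctly identified as the crux but did not supply; without that citation (or an independent proof of it), your argument stops one step short of the conclusion.
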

\begin{proof}
Let $\lambda_{+}(g)$ and $\lambda_{-}(g)$ denote the stable foliation and the unstable foliation of $g$ respectively.
If $g$ is symmetric, then both $\lambda_{+}(g)$ and $\lambda_{-}(g)$ must be symmetric.
To prove the converse, we first remark that in \cite[Section 3]{Mas-q},
it is proved that if both $\lambda_{+}(g)$, $\lambda_{-}(g)$ are symmetric with respect to the same covering, then $g$ must be symmetric.
Hence it suffices to prove that if $\lambda_{-}(g)$ is symmetric with respect to a finite covering $p\colon S\rightarrow S'$, 
then $\lambda_{+}(g)$ must be symmetric with respect to $p$.
Suppose $\lambda_{-}(g)\in \partial\Pi(\mathcal{C}(S'))$.
Since $\lambda_{-}(g)$ is fixed by $g$ and $g^{n}(a)\rightarrow \lambda_{+}(g)$ for any $a\in\mathcal{C}(S)$, 
translates $g^{n}(\Pi(\mathcal{C}(S')))$ for large enough $n>>1$ must fellow travel each other.
If $\{g^{n}(\Pi(\mathcal{C}(S')))\}_{n\in \mathbb{N}}$ are all distinct, then this contradicts Lemma \ref{lem.finiteparallel}.
Hence for some $i\not=j$, we have $g^{i}(\Pi(\mathcal{C}(S'))) = g^{j}(\Pi(\mathcal{C}(S')))$.
Then we see that $g^{i-j}(\Pi(\mathcal{C}(S'))) = \Pi(\mathcal{C}(S'))$ and hence $\lambda_{+}(g)\in\partial\Pi(\mathcal{C}(S'))$.
This completes the proof.
\end{proof}

Now we prove Lemma \ref{lem.wpd-symmetric}.
\begin{proof}[Proof of Lemma \ref{lem.wpd-symmetric}]
Suppose the contrary that there is a sequence $\Pi_{n}$ of symmetric subspaces such that $\diam_\mathcal{C}(\pi_{\alpha}(\Pi_{n}))\rightarrow \infty$.
Since in any finitely generated group, the number of subgroups of a given index is finite,
the number of coverings from $S$ up to conjugacy is finite.
Hence after taking a subsequence, we may suppose that $\Pi_{n}=g_{n}\Pi(\mathcal{C}(S'))$ for some $g_{n}\in\mathrm{MCG}(S)$ and some fixed symmetric subspace $\Pi(\mathcal{C}(S'))$.
We fix a point $p\in\alpha$.
By applying suitable power of $g$ to $\Pi_{n}$ we may suppose that all $\pi_{\alpha}(\Pi_{n})$ are coarsely centered at $p$.
By Gromov hyperbolicity of $\mathcal{C}(S)$, if $\diam_\mathcal{C}(\pi_{\alpha}(\Pi_{n}))$ is greater than a constant which depends only on the hyperbolicity constant,
then $\Pi_{n}$ must fellow travel with $\alpha$.
By Corollary \ref{cor.asym}, the stable and unstable foliations of $g$ are asymmetric, which implies that $\{\Pi_{n}\}$ must contain infinitely many distinct elements.
Thus we get a contradiction to Lemma \ref{lem.finiteparallel}.
\end{proof}

Finally, we verify the existence of asymmetric pseudo-Anosov maps.  We
remark that this result does not hold for the exceptional surfaces, as
every map is a lift with respect to the covering given by a
hyperelliptic involution, and we now enumerate them for the
convenience of the reader.  Given an orbifold cover
$p \colon S \to S'$, we shall write $S'$ as
$S'_{g'}(d_1, \ldots d_{n'})$, where $g'$ is the genus of $S'$, $n'$
is the number of orbifold points, and the $d_i \ge 2$ are the orders
of the orbifold points, so each orbifold points has metric angle
$2\pi/d_i$.  We allow $d_i = \infty$, corresponding to a puncture.
With this notation the exceptional cases are:

\begin{itemize}

\item $p \colon S_{0, 4} \to S'_0(2, 2, \infty, \infty)$

\item $p \colon S_{1, 1} \to S'_0(2, 2, 2, \infty)$

\item $p \colon S_{1, 2} \to S'_0(2, 2, 2, 2, \infty)$

\item $p \colon S_{2, 0} \to S'_0(2, 2, 2, 2, 2, 2)$

\end{itemize}

Recall that the dimension of the Teichm\"uller space of an orbifold
$S'_{g'}(d_1, \ldots d_{n'})$ is $6g' - 6 + n'$, and so the dimension
only depends on the number of orbifold points, not on their orders.
This is because any two orbifold points have small neighbourhoods
which are conformally equivalent.  Therefore the set of conformal
structures on $S'_{g'}(d_1, \ldots , d_{n'})$ is equal to the set of
conformal structures on $S_{g', n'}$, and due to work of Picard
\cite{picard} in the closed case, and Heins \cite{heins} for the
punctured case, given a choice of order $d_i$ at each orbifold point,
there is a unique hyperbolic metric with cone points of that order in
a given conformal class.

The following result is well-known, but we provide the details for
convenience.

\begin{proposition} \label{prop:exceptional} %
Let $S$ be a surface of genus $g$ with $n$ punctures, with
$2g + n > 4$, and let $p \colon S \to S'$ be an orbifold cover.  Then
$\text{dim}( \T(S') ) < \text{dim}( \T(S) )$.
\end{proposition}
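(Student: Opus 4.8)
The plan is to turn the statement into a numerical inequality and then settle it by Riemann--Hurwitz together with a short, hypothesis-driven case analysis. By the paragraph preceding the proposition the orbifold Teichm\"uller space $\T(S')$ coincides with that of the punctured surface $S_{g',n'}$, of complex dimension $3g'-3+n'$, where $n'$ is the number of orbifold points of $S'$; likewise $\dim_{\mathbb C}\T(S)=3g-3+n$. Hence it suffices to prove the strict inequality $3g+n>3g'+n'$ (equivalently, the same inequality for the real dimensions). I would write $d\ge 2$ for the degree of $p$, let $m$ be the number of punctures of $S'$ and $c$ its number of finite-order cone points, so that $n'=m+c$, and record that $\chi^{\mathrm{orb}}(S')<0$ since $S$, and hence $S'$, is hyperbolic.

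First I would set down the local structure of the cover: over a cone point of order $d_j$ every preimage is a smooth point of $S$ of local degree exactly $d_j$, while the punctures of $S$ are precisely the preimages of the punctures of $S'$, so that $m\le n\le dm$. Substituting these data into the Riemann--Hurwitz relation for the associated branched cover of closed surfaces, and using $\sum_{\mathrm{cone}}(1-1/d_j)\ge c/2$, a direct computation rearranges $\Delta:=(3g+n)-(3g'+n')$ into the lower bound
\[
\Delta\ \ge\ 3(d-1)(g'-1)+(d-1)\,m+\tfrac{3d-4}{4}\,c.
\]
For $g'\ge 1$ all three summands are nonnegative (as $3d-4\ge 2$), and they cannot vanish simultaneously: $g'=1,\ m=0,\ c=0$ would make $S'$ a flat torus, contradicting hyperbolicity. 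Thus $\Delta>0$ whenever $g'\ge 1$, and this case uses nothing beyond the hyperbolicity of $S'$.

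The heart of the matter is $g'=0$, where the bound above is useless because $3(d-1)(g'-1)<0$. Here I would instead use Riemann--Hurwitz for a branched cover of the sphere to write the cone ramification as $2g-2+2d-(dm-n)$, and combine $R_{\mathrm{cone}}\ge dc/2$ with $d\ge 2$ to obtain the cleaner estimate $\Delta\ge g+m-2$ (valid since $2g+n>2$). This already gives $\Delta>0$ once $g+m\ge 3$, leaving only the finitely many pairs $(g,m)$ with $g+m\le 2$. This is exactly where the hypothesis $2g+n>4$ enters: the pairs $(0,0),(1,0),(2,0)$ force $n=0$ and are excluded outright, while for $(0,1)$ and $(1,1)$ the hypothesis forces $n$ so large that the Riemann--Hurwitz bound on $c$ already yields $\Delta\ge 1$.

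The single delicate pair, and the step I expect to be the main obstacle, is $(g,m)=(0,2)$: there the estimates only give $\Delta\ge 0$, and equality is forced into the configuration $d=2$, $g'=0$, with every cone point of order two. Euler-characteristic bookkeeping alone cannot exclude this, so I would appeal to \emph{realizability} of the branched cover: a degree-two cover of the sphere by the sphere is branched over exactly two points, hence carries at most two order-two cone points, whereas equality would require $c=n-2\ge 3$ as soon as $n\ge 5$. No such cover exists, so $\Delta>0$ for every genuine cover with $2g+n>4$. I would close by noting that the four configurations realizing $\Delta=0$ are precisely the four exceptional hyperelliptic covers enumerated above, each satisfying $2g+n\le 4$, which is exactly why the hypothesis $2g+n>4$ produces the strict inequality.
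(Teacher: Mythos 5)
Your proposal is correct, and it takes a genuinely different route from the paper's proof. The paper argues by contradiction from equality of dimensions: since the cover induces an isometric embedding $\T(S') \to \T(S)$, equal dimension would make it onto, and the contradiction is then extracted from Teichm\"uller rigidity --- for regular covers, the deck group would act trivially on $\T(S)$, contradicting faithfulness of the $\MCG(S)$-action (which fails only for the four exceptional surfaces excluded by $2g+n>4$); for irregular covers, the Earle--Kra generalization of Royden's theorem forces $S$ homeomorphic to $S'$, hence $g=g'$ and $n=n'$, after which a Riemann--Hurwitz analysis with $d \ge 3$ eliminates a short finite list of candidates. You bypass all of this and prove the inequality $3g-3+n > 3g'-3+n'$ outright by Riemann--Hurwitz, using only the dimension formula the paper records before the proposition. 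Your key estimates are correct: I verified both $\Delta \ge 3(d-1)(g'-1)+(d-1)m+\tfrac{3d-4}{4}c$ and, for $g'=0$, the bound $\Delta \ge g+m-2$, as well as the disposal of the residual cases. What your route buys is elementarity and self-containment --- no faithfulness of the action, no Royden/Earle--Kra, no regular-versus-irregular dichotomy, plus an explicit codimension bound; what the paper's route buys is a much smaller combinatorial problem (rigidity first pins down $g=g'$, $n=n'$, $d\ge 3$) and a conceptual view of why the exceptional surfaces fail (hyperelliptic involutions acting trivially on $\T(S)$). One point to make explicit when you flesh this out: in the cases $(g,m)=(0,1)$, $(1,1)$ and $(0,2)$, the Riemann--Hurwitz bound on $c$ combined only with $d\ge 2$ gives $\Delta \ge -1$, $\Delta \ge 0$, $\Delta \ge 0$ respectively, so you must invoke the constraint $n \le dm$ recorded in your setup (giving $d \ge n/m$) to reach $\Delta \ge 1$; indeed, for $(0,2)$ that same constraint already finishes the job, since the equality configuration forces $d=2$, whence $n \le dm = 4 < 5$, so your branched-cover realizability argument, while correct, is heavier than necessary.
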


\begin{proof}
The hyperbolic metric on $S'$ lifts to a hyperbolic metric on $S$, and
this gives an isometric embedding $\T(S') \to \T(S)$, see for example
\cite[Section 7]{RS}.  In particular, if the dimension of $\T(S')$ is
equal to the dimension of $\T(S)$, then the image of $\T(S')$ is equal
to $\T(S)$.

First consider the case in which the cover is regular, with finite
deck transformation group $F < \MCG(S)$.  The image of $\T(S')$ in
$\T(S)$ is the set of points fixed by all elements of $F$, and so the
fixed point set of $F$ is all of $\T(S)$.  However, the mapping class
group $\MCG(S)$ acts faithfully on $\T(S)$, except for the finite list
of exceptional cases $S_{0, 4}, S_{1, 1}, S_{1, 2}$ and $S_{2, 0}$ in
which there are hyperelliptic involutions which act trivially on
$\T(S)$, see for example \cite{farb-margalit}*{Section 12.1}.
However, these are precisely the surfaces we have excluded with the
condition $2g + n > 4$.

Now suppose that the cover $p \colon S \to S'$ is not regular.  Let
$S'$ have genus $g'$, and $n'$ orbifold points.  The generalization of
Royden's Theorem \cite{royden} due to Earle and Kra \cite{earle-kra}
states that if $\T(S)$ is isometric to $\T(S')$ then $S$ is
homeomorphic to $S'$, unless $2g + n \le 4$, but this is precisely the
exceptional cases we have excluded.  This implies that $g = g'$ and
$n = n'$.  We now show that in fact there can be no such orbifold
cover with these properties.

Recall that the Euler characteristics of the two surfaces are given by
\[ \chi(S) = 2 - 2g - n \quad \text{ and } \quad \chi(S') = 2 - 2g' -
\sum_{i=1}^{n'}(1 - \frac{1}{d_i}).  \]
The order of an orbifold point satisfies $d_i \ge 2$, and so as
$g = g'$ and $n = n'$:
\[ \chi(S) - \chi(S') \ge  - \frac{1}{2} n.   \]
Euler characteristic is multiplicative under covers, so
$\chi(S') = \frac{1}{d} \chi(S')$, where $d$ is the degree of the
covering map:
\[ \chi(S) - \frac{1}{d}\chi(S) \ge  - \frac{1}{2} n.   \]
This gives:
\[ (1 - \frac{1}{d})(2 - 2g - n) \ge  - \frac{1}{2} n.   \]
Which we may rewrite as:
\[ \left( 1 - \frac{1}{d} \right) (2 - 2g) \ge \left( \frac{1}{2} -
\frac{1}{d} \right) n .  \]
As we are assuming that the cover is irregular, $d \ge 3$.  The only
possible non-negative integer solutions have $g = 0$ or $g = 1$.  If
$g = 1$, then $n = 0$, but the torus does not have a hyperbolic
metric.  If $g = 0$, then
\[  n \le \frac{4d - 4}{d - 2}. \]
For $d \ge 7$ this implies that $n < 5$, so the only possibility is
the exceptional surface $S_{0, 4}$.  The remaining finite list of
possibilities consists of:
\[ S_{0, 8}, d = 3, \quad S_{0, 7}, d=3, \quad S_{0, 6}, d=3, 4, \quad
S_{0, 5}, d = 3, 4, 5, 6 \]
We now consider these in turn.

\begin{itemize}

\item[$d=3$] First consider the case in which the cover is degree
$d = 3$, and the surface $S$ is $S_{0, n}$, with $5 \le n \le 8$.  The
Euler characteristic of $S$ is $\chi(S) = 2 - n$, so
$\chi(S') = \frac{2 - n}{3}$.  Let $S'$ have $a$ orbifold points of
order $3$ and $n - a$ orbifold points of order $\infty$.  Then
\[ \chi(S') = 2 - \frac{2}{3}a - (n-a) = \frac{2-n}{3}, \]
which implies that $a = 2n - 4$.  However, $a \le n$, which implies
that $n \le 4$, a contradiction.

\item[$d=4$] Now consider the case in which the cover is degree
$d = 4$, and the surface $S$ is $S_{0, n}$, with $5 \le n \le 6$.  The
Euler characteristic of $S$ is $\chi(S) = 2 - n$, so
$\chi(S') = \frac{2 - n}{4}$.  Let $S'$ have $a$ orbifold points of
order $2$, $b$ orbifold points of order $4$ and $n - a - b$ orbifold
points of order $\infty$.  Then
\[ \chi(S') = 2 - \frac{1}{2}a - \frac{3}{4}b - (n- a -b) =
\frac{2-n}{4}, \]
which we may rewrite as
\[  2a + b = 3n - 6.  \]

When $n = 5$ we obtain $2a + b = 9$, but $a+b \le 5$, so the only
possible solution with non-negative integers is $a = 4$ and $b=1$, but
this means that $S'$ has no punctures, a contradiction.

When $n = 6$ we obtain $2a + b = 12$, but $a+b \le 6$, so the only
possible solution with non-negative integers is $a = 6$ and $b=0$, but
again, this means that $S'$ has no punctures, a contradiction.

\item[$S = S_{0, 5}$] Finally, consider possible quotients of the
surface $S = S_{0, 5}$, with degree $d = 5$ or $d = 6$.

When $d = 5$, as $\chi(S) = -3$, $\chi(S') = -3/5$.  Let $S'$ have $a$
orbifold points of order $5$ and $5 - a$ orbifold points of order
$\infty$.  Then
\[ \chi(S') = 2 - \frac{4}{5}a - (5-a) = - \frac{3}{5}, \]
which implies that $a = 12$, a contradiction.

When $d = 6$, as $\chi(S) = -3$, $\chi(S') = -1/2$.  Let $S'$ have $a$
orbifold points of order $2$, $b$ orbifold points of order $3$, $c$
orbifold points of order $6$ and $5 - a - b - c$ orbifold points of
order $\infty$.  Then
\[ \chi(S') = 2 - \frac{1}{2}a - \frac{2}{3}b - \frac{5}{6}c -
(5-a-b-c) = - \frac{1}{2}, \]
which we may rewrite as:
\[ 3a + 2b + c = 15.  \]
However, each of $a, b$ and $c$ is at most $5$, so the only possible
solution with non-negative integers is $a = 5$, $b = 0$ and $c = 0$.
This implies that the number of punctures for $S'$ is also zero, a
contradiction.
\end{itemize}
This completes the proof of Proposition \ref{prop:exceptional}. 
\end{proof}

\begin{proposition}\label{prop.asymmetric pA}
Let $S$ be a surface of genus $g$ and $n$.  Suppose $2g + n > 4$.
Then there exists an asymmetric pseudo-Anosov element in
$\mathrm{MCG}(S)$.
\end{proposition}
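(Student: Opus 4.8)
The plan is to produce an asymmetric pseudo-Anosov element by a counting or measure-theoretic argument, leveraging the machinery already assembled in the excerpt. My first thought is to combine Proposition \ref{prop:exceptional} with Lemma \ref{lem.finiteparallel} to show that symmetric pseudo-Anosov maps are, in a suitable sense, rare, so that a generic pseudo-Anosov map must be asymmetric. Concretely, I would argue as follows. Since $2g+n>4$, the surface $S$ supports pseudo-Anosov maps (as $3g-3+n>0$), so we must only rule out the possibility that \emph{every} pseudo-Anosov map is symmetric. By Corollary \ref{cor.asym}, a pseudo-Anosov $g$ is symmetric if and only if one of its invariant foliations lies in $\partial\Pi(\mathcal{C}(S'))$ for some finite cover $p\colon S\to S'$. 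Thus it suffices to exhibit a single pseudo-Anosov whose stable foliation avoids all the symmetric subspaces $\partial\Pi(\mathcal{C}(S'))$.

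The cleanest route is the probabilistic one already invoked in the paper. As remarked in the discussion following Theorem \ref{thm.main}, Maher--Tiozzo \cite{MT} guarantees that a random walk on $\MCG(S)$ produces loxodromic (hence pseudo-Anosov) elements with asymptotic probability one; and by Proposition \ref{prop.DT} and Proposition \ref{prop.boundary}, the harmonic measure arising from the random walk is supported on $\partial_{\CS}(\mathcal{C}(S))\subset\mathcal{AEL}(S)$, the asymmetric ending laminations. Therefore, with asymptotic probability one, the forward limit point of the random walk is an asymmetric ending lamination, so the resulting pseudo-Anosov is asymmetric by Corollary \ref{cor.asym}. In particular such an element exists. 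However, to keep the argument self-contained and avoid circularity --- since the random-walk results are themselves cited as corollaries of the main theorem --- I would prefer a direct argument, and only fall back on the probabilistic statement if needed.

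The direct approach I favor is a union bound on limit sets. Fix an asymmetric pseudo-Anosov axis candidate as follows: take any pseudo-Anosov $h$ (which exists since $3g-3+n>0$) and consider the orbit of its stable foliation under $\MCG(S)$. The countably many symmetric subspaces $\{g\Pi(\mathcal{C}(S'))\}$, as $g$ ranges over $\MCG(S)$ and $S'$ over the finitely many covers up to conjugacy, each have limit set $\partial(g\Pi(\mathcal{C}(S')))$ of the form $g\cdot\partial\Pi(\mathcal{C}(S'))$; by Proposition \ref{prop:exceptional} each such limit set is the image of $\mathcal{EL}(S')$ under $\Pi$, a proper subspace of strictly smaller "size" because $\dim(\T(S'))<\dim(\T(S))$. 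The countable union $\partial\Pi=\bigcup\partial(g\Pi(\mathcal{C}(S')))$ is therefore a countable union of proper closed subsets of $\mathcal{EL}(S)$, each of positive codimension; since $\mathcal{EL}(S)$ is not a countable union of such lower-dimensional pieces, there exist asymmetric ending laminations. Picking a filling pair of asymmetric laminations and applying the standard Thurston construction (or simply a high power of a pseudo-Anosov whose foliations are generic) yields an asymmetric pseudo-Anosov, where asymmetry is checked via Corollary \ref{cor.asym}.

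The main obstacle I anticipate is making the "codimension / not a countable union" step rigorous, since $\mathcal{EL}(S)$ is not a manifold and the notion of dimension must be replaced by a genuine measure-theoretic or Baire-category statement. The honest fix is to transport the dimension count of Proposition \ref{prop:exceptional} into a statement about the harmonic or Thurston measure of $\partial\Pi$: one shows that each symmetric subspace, being quasi-isometrically embedded via a cover to a surface of strictly smaller complexity, has limit set of measure zero (for the harmonic measure $\nu$ on $\PMF(S)$ studied in \cite{KM}), and a countable union of null sets is null, so its complement --- the asymmetric laminations --- is non-empty and in fact of full measure. This is exactly the content flagged in the Corollary following Theorem \ref{thm.main}, and it is the step where the quantitative finiteness of Lemma \ref{lem.finiteparallel} does the real work, by controlling how the translates $g\Pi(\mathcal{C}(S'))$ can accumulate. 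I expect the write-up to hinge on correctly quoting the measure-zero property of proper symmetric subspaces and then invoking countable additivity.
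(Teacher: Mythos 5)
Your proposal has two genuine gaps, and the circularity you yourself flag is never actually repaired. First, your ``honest fix'' --- that each symmetric limit set is null for the harmonic measure $\nu$ on $\PMF(S)$ --- is precisely the content of the Corollary that the paper deduces \emph{from} Theorem \ref{thm.main}, and Theorem \ref{thm.main} (via part (b) of Theorem \ref{thm.cosymmetric}) needs Proposition \ref{prop.asymmetric pA} to produce even one loxodromic element on $\CS(S)$. Lemma \ref{lem.finiteparallel} by itself does not yield measure zero, and you sketch no independent proof of that nullity, so the measure-theoretic route is circular. Second, and more fundamentally, even if you prove (say by Baire category) that asymmetric ending laminations exist or are generic, that does not produce an asymmetric pseudo-Anosov: the invariant foliations of pseudo-Anosov maps form a countable set, while the symmetric limit sets $g\,\partial\Pi(\mathcal{C}(S'))$ are dense in the boundary (their $\MCG(S)$-translates accumulate everywhere), so a generic-in-category or generic-in-measure lamination need not be the foliation of any mapping class whatsoever. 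Your two proposed conversions both fail: Thurston's construction takes a filling pair of \emph{multicurves}, not laminations; a high power of a pseudo-Anosov has the same invariant foliations as the original; and ``generic singularities'' does not imply asymmetry (a lift through an unbranched cover can perfectly well have generic singularity data).

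The paper's proof avoids both problems by working in Teichm\"uller space rather than at the boundary. There, properness of the $\MCG(S)$-action gives local finiteness: only finitely many translates $g\T(S')$, over the finitely many covers up to isomorphism, meet a fixed ball $B(x,r)$ in the thick part, and each is isometrically embedded with strictly smaller dimension by Proposition \ref{prop:exceptional}; hence some sub-ball $B(x',r')$ misses the symmetric locus entirely. Density of periodic geodesics (Eskin--Mirzakhani) then supplies a pseudo-Anosov whose axis passes through $B(x',r')$, and since the axis of any symmetric pseudo-Anosov lies inside some $g\T(S')$, this element is asymmetric. The two ingredients your approach lacks are exactly this local finiteness (which has no analogue in $\PMF(S)$, where the symmetric sets are dense) and the fact that pseudo-Anosov axes meet every \emph{open} set, which is what converts ``the complement of the symmetric locus is nonempty'' into ``some pseudo-Anosov avoids the symmetric locus.''
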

\begin{proof}
An orbifold cover $p \colon S \to S'$ gives an isometric embedding of
$\T(S')$ in $\T(S)$, and as we are not in one of the exceptional
cases, by Proposition \ref{prop:exceptional}, the dimension of
$\T(S')$ is strictly less than the dimension of $\T(S)$.

There is a finite index subgroup $H < \MCG(S')$ consisting of mapping
classes which lift to maps of $S$.  By abuse of notation, we shall
also refer to the corresponding subgroup in $\MCG(S)$ as $H$, and this
subgroup preserves the image of $\T(S')$ in $\T(S)$.

Let $B(x, r)$ be a ball in $\T(S)$, then there is an $\e > 0$ such
that $B(x, r)$ lies in the $\epsilon$-thick part of $\T(S)$.  Pick a
basepoint on $\T(S')$.  The subgroup $H$ acts coarsely transitively on
the $\e$-thick part of $\T(S')$, and so if some translate $g\T(S')$,
for $g$ in $\MCG(S)$, intersects $B(x, r)$, then there is a translate of
the basepoint a bounded distance from $x$.  As $\MCG(S)$ acts properly on
$\T(S)$ there are only finitely many distinct translates of $\T(S')$
intersecting $B(x, r)$.

As there are only finitely many quotients $p \colon S \to S'$ up to
covering space isomorphism, there are only finitely many isometrically
embedded symmetric subsets $\T(S')$ intersecting $B(x, r)$.  As we are
in the non-exceptional case, they all have strictly smaller dimension
than $\T(S)$.  In particular, there is a ball
$B(x', r') \subseteq B(x, r)$ in $\T(S)$ disjoint from all of the
symmetric subsets (i.e. the union of the $gT(S')$ over all $g$ and all
covers).

Periodic geodesics are dense in $\T(S)$, see for example
\cite{eskin-mirzakhani}*{Section 6}.  Therefore, there is a
pseudo-Anosov mapping class with geodesic axis intersecting
$B(x', r')$, and this pseudo-Anosov mapping class is asymmetric, as
required.
\end{proof}

\begin{proof}[Proof of Theorem \ref{thm.cosymmetric}]
By Lemma \ref{lem.wpd-symmetric}, we see that the assumption (2) of Theorem \ref{thm.WPD} is satisfied for
$X = \mathcal{C}(S)$, $Y=\{\Pi(\mathcal{C}(S'))\}$ and an asymmetric pseudo-Anosov $g$.
Then $Z=\CS(S)$.
As every pseudo-Anosov element is WPD on $\mathcal{C}(S)$ \cite[Proposition 11]{bestvina-fujiwara}, 
every asymmetric pseudo-Anosov is also WPD on $\CS(S)$, especially it is loxodromic.
On the other hand, note that if an element in $\mathrm{MCG}(S)$ acts loxodromically on $\CS(S)$,
then it must act loxodromically on $\mathcal{C}(S)$.
Since every symmetric pseudo-Anosov element must fix corresponding symmetric subspaces, it can not be loxodromic.
Thus (a) follows.

We now prove (b).  If $2g + n > 4$, then $\mathrm{MCG}(S)$ contains a
free group of rank $2$, and in particular it is not virtually cyclic.
Then by Proposition \ref{prop.asymmetric pA}, at least one loxodromic
element exists.  We have already seen that every loxodromic element
acts WPD, hence the action of $\mathrm{MCG}(S)$ on $\CS(S)$ is strongly WPD.
That the action is non-elementary is the consequence of Proposition
\ref{prop.non-elementary}.
\end{proof}

\section{Non-maximal train tracks} \label{section:non-maximal}

We say a pair of curves $a$ and $b$ on a surface is \emph{filling} if
when put in minimal position, there is no essential closed curve
disjoint from $a \cup b$.  In particular, this implies that every
complementary region is a disc containing at most one puncture.  We
say a pair of curves is \emph{maximally filling} if when put in
minimal position every complementary region is a square, hexagon or a
bigon containing a single puncture.

We say a pair of curves $a$ and $b$ is \emph{$K$-maximally filling}, if
for any pair of curves $a'$ and $b'$ with $d_{\C(S)}(a, a') \le K$ and
$d_{\C(S)}(b, b') \le K$, then $a'$ and $b'$ are maximally filling.

In this section we will consider surfaces $S_{g, n}$ which support a
pair of curves which are maximally filling, and which also support a
pair of curves which fill the surface, but are \emph{not} maximally
filling.  This consists of all surfaces which support a pseudo-Anosov
map, with the exceptions of $S_{0, 4}$ and $S_{1, 1}$.  The
four-punctured sphere $S_{0, 4}$ has the property that all filling
curves are maximally filling.  The punctured torus $S_{1, 1}$ has the
property that there are no maximally filling pairs of curves, as any
pair of curves in minimal position may be realised as a pair of
geodesics in a flat metric on the torus with a marked point, and the
marked point is then contained in a square, not a bigon.  We shall
therefore consider surfaces $S_{g, n}$ with $3g + n > 4$, as this
condition includes all surfaces which support a pseudo-Anosov map,
excluding $S_{0, 4}$ and $S_{1, 1}$.

\begin{remark} \label{remark:stratum} %
In the case that the surface $S$ is either $S_{0, 4}$ or $S_{1, 1}$,
the Teichm\"uller space $\T(S)$ has complex dimension $1$, so there is
a single stratum of quadratic differentials, and all the pseudo-Anosov
maps have invariant foliations with the same collection of
singularities.  In the case of $S_{0, 4}$, the invariant foliation is
the quotient of an invariant foliation for an Anosov map on a torus by
the hyperelliptic involution, with the order two cone points replaced
by punctures.  This foliation then has exactly four $1$-prong
singularities, and so Theorem \ref{theorem:gadre-maher} holds in this
case.  However, in the case of $S_{1, 1}$, Theorem
\ref{theorem:gadre-maher} does not hold, as the invariant foliations
are preserved by the hyperelliptic involution, which fixes the
puncture, and so the foliations cannot have $1$-prong singularities at
the puncture.
\end{remark}

\begin{lemma} \label{lemma:maximal} Let $S = S_{g, n}$ be a surface of
finite type of genus $g$ with $n$ punctures, with $3g + n > 4$.  Let
$g$ be a pseudo-Anosov element on $S$ with maximal invariant
laminations and let $\alpha$ be an axis for $g$.  Then for any $K$
there is an $R$ such that any two curves on $\alpha$ distance at least
$R$ apart are $K$-maximally filling.
\end{lemma}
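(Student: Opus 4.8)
The plan is to argue by contradiction, reducing the statement to a stability property of the invariant foliations of $g$. Write $\mathcal{F}^{+}, \mathcal{F}^{-}$ for the stable and unstable foliations of $g$; by hypothesis their supporting laminations $\lambda_{\pm}$ are maximal, so $\mathcal{F}^{\pm}$ have only $3$-prong singularities in the interior and $1$-prong singularities at the punctures. Suppose the lemma fails for some $K$: then there are curves $a_{n}, b_{n}$ on $\alpha$ with $d_\C(a_{n}, b_{n}) \to \infty$, together with curves $a'_{n}, b'_{n}$ satisfying $d_\C(a_{n}, a'_{n}) \le K$ and $d_\C(b_{n}, b'_{n}) \le K$, such that $(a'_{n}, b'_{n})$ is not maximally filling. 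Since two curves at distance at least $3$ in $\C(S)$ automatically fill the surface, and $d_\C(a'_{n}, b'_{n}) \ge d_\C(a_{n},b_{n}) - 2K \to \infty$, the pair $(a'_{n}, b'_{n})$ does fill; the failure is therefore only in the maximality of the complementary regions.

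The next step is to normalise so that the two families of curves run off to opposite ends of the axis. Because being maximally filling is invariant under $\MCG(S)$, and $g$ acts on $\C(S)$ as an isometry with quasi-geodesic axis $\alpha$, I may replace each quadruple $(a_{n}, b_{n}, a'_{n}, b'_{n})$ by its image under a suitable power $g^{k_{n}}$ without affecting any hypothesis. Choosing $k_{n}$ so that the coarse midpoint of $a_{n}$ and $b_{n}$ along $\alpha$ is carried near a fixed basepoint $\alpha(0)$, and using that $\alpha$ is a quasi-geodesic, I arrange that, after relabelling, $a_{n} \to \lambda_{-}$ and $b_{n} \to \lambda_{+}$ in $\partial\C(S)$, both at distance tending to infinity from $\alpha(0)$. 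Bounded perturbations do not change limits in the Gromov boundary, so $a'_{n} \to \lambda_{-}$ and $b'_{n} \to \lambda_{+}$ as well. By Klarreich's identification of $\partial\C(S)$ with $\EL(S)$ \cite{klarreich}, every subsequential limit of $a'_{n}$ in $\PMF$ is supported on $\lambda_{-}$; as the invariant lamination of a pseudo-Anosov map is uniquely ergodic, this forces $a'_{n} \to [\mathcal{F}^{-}]$, and likewise $b'_{n} \to [\mathcal{F}^{+}]$, in $\PMF$.

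It then remains to prove the following stability statement, which is the heart of the argument: there are neighbourhoods $U^{-}$ of $[\mathcal{F}^{-}]$ and $U^{+}$ of $[\mathcal{F}^{+}]$ in $\PMF$ such that every pair of curves $(c, d)$ with $[c] \in U^{-}$ and $[d] \in U^{+}$ is maximally filling. Granting this, for $n$ large we have $[a'_{n}] \in U^{-}$ and $[b'_{n}] \in U^{+}$, so $(a'_{n}, b'_{n})$ is maximally filling, contradicting the choice of the sequence and completing the proof. To establish the stability statement I would pass to the $g$-invariant singular flat structure $q$ whose horizontal and vertical foliations are $\mathcal{F}^{-}$ and $\mathcal{F}^{+}$. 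A curve whose projective class is close to $[\mathcal{F}^{-}]$ has nearly horizontal $q$-geodesic representative, and one close to $[\mathcal{F}^{+}]$ is nearly vertical, so their union is a transverse family of flat geodesics cutting $S$ into flat polygons. A Gauss--Bonnet count in the cone metric, using that the only cone points are the interior $3\pi$ (three-prong) points and the $\pi$ points at the punctures, shows that each complementary region carries at most one cone point and is a square at a regular point, a hexagon at a three-prong singularity, and a once-punctured bigon at a puncture; this is precisely the condition that $(c,d)$ be maximally filling.

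The main obstacle is this last step: one must show that for $c, d$ sufficiently close to $\mathcal{F}^{\mp}$ the cellulation of $S$ by $c \cup d$ refines to the natural separatrix cellulation of the generic flat structure, with no complementary region absorbing two singularities or acquiring extra sides. This is where the maximality hypothesis on $\lambda_{\pm}$, equivalently the genericity of the singularities, is used in an essential way, and where the condition $3g + n > 4$ — which guarantees the existence of such generic pseudo-Anosov maps and excludes the degenerate single-stratum cases $S_{0,4}$ and $S_{1,1}$ discussed in Remark \ref{remark:stratum} — enters.
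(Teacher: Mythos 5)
Your reduction is sound as far as it goes: the translation-by-$g^{k_n}$ normalisation, the convergence $a'_n \to \lambda_-$, $b'_n \to \lambda_+$ in $\partial\C(S)$, and the upgrade to convergence in $\PMF$ via Klarreich's theorem together with unique ergodicity of pseudo-Anosov foliations are all correct, and this compactness/contradiction framing is genuinely different from the paper, which instead builds explicit train-track neighbourhoods along the axis (Rauzy induction, Proposition \ref{prop:intersection}, Theorem \ref{theorem:splits}, Lemma \ref{lemma:nest}). But the statement you reduce to --- that all curves in fixed $\PMF$-neighbourhoods $U^{-}$ of $[\mathcal{F}^{-}]$ and $U^{+}$ of $[\mathcal{F}^{+}]$ pair up maximally filling --- is precisely the geometric heart of the lemma, and your proposed proof of it does not work as sketched.

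There are two concrete failures. First, closeness of $[c]$ to $[\mathcal{F}^{-}]$ in $\PMF$ only controls the \emph{ratio} of the total vertical variation of $c$ to its total horizontal variation; it gives no local control on the flat geodesic representative. A curve of flat length $L$ can converge to $[\mathcal{F}^{-}]$ in $\PMF$ while containing nearly vertical segments of length on the order of $\sqrt{L}$, so you may not assume the corners of $c \cup d$ are close to right angles, nor that $c$ and $d$ cross in a controlled pattern. Second, and more fundamentally, even granting right angles everywhere, Gauss--Bonnet does not give ``at most one singularity per complementary region.'' For a disc region with $N$ right-angled corners, $k$ interior trivalent cone points (cone angle $3\pi$) and $p$ interior $1$-prong punctures (cone angle $\pi$), the count reads $N\pi/2 = 2\pi + k\pi - p\pi$, which is satisfied not only by squares, hexagons and punctured bigons but also by an octagon containing two trivalent points ($N=8$, $k=2$, $p=0$) and by a square containing one trivalent point and one puncture ($N=4$, $k=1$, $p=1$); both violate maximal filling. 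Ruling out such regions is exactly what the paper's Proposition \ref{prop:twice} accomplishes, and it requires the stronger \emph{carrying} hypothesis that each curve runs at least twice over every band of the respective non-classical interval exchange --- a condition manufactured by Rauzy induction and propagated to the $K$-neighbourhoods of points far along the axis by the nesting lemma (Lemma \ref{lemma:nest}) together with Theorem \ref{theorem:splits}, not a condition that follows from proximity in $\PMF$ alone. You correctly flag this last step as the main obstacle, but flagging it is not closing it: as written, the proof is incomplete exactly where the lemma's content lies.
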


We shall prove this result in this section.  We start by reviewing
some useful properties of train tracks.

\subsection{Train tracks}

We briefly recall some of the results we use about train tracks on
surfaces. For more details see for example Penner and Harer
\cite{penner-harer}.

Recall that a train track on a surface $S$ is a smoothly embedded
graph, such that the edges at each vertex are all mutually tangent,
and there is at least one edge in each of the two possible directed
tangent directions. Furthermore, there are no complementary regions
which are nullgons, monogons, bigons or annuli.  The vertices are
commonly referred to as \emph{switches} and the edges as
\emph{branches}.  We will always assume that all switches have valence
at least three.  A trivalent switch is illustrated below in Figure
\ref{fig:switch}.

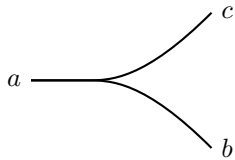
\begin{figure}[H]
\begin{center}
\begin{tikzpicture}[scale=0.3]

\draw [thick] (20,13) node [left] {$a$} -- 
              (22.5, 13) .. controls (23.5,13) and (25,13) .. 
              (28,10) node [right] {$b$};
\draw [thick] (20,13) -- 
              (22.5, 13) .. controls (23.5,13) and (25,13) .. 
              (28,16) node [right] {$c$};

\end{tikzpicture}
\end{center}
\caption{A trivalent switch for a train track.} \label{fig:switch}
\end{figure}

An assignment of non-negative numbers to the branches of $\tau$, known
as \emph{weights}, satisfies the \emph{switch equality} if the sum of
weights in each of the two possible directed tangent directions is
equal: that is $a = b + c$ in Figure \ref{fig:switch} above.

An assignment of non-negative numbers to the branches of $\tau$, known
as \emph{weights}, satisfies the \emph{switch equality} if the sum of
weights in each of the two possible directed tangent directions is
equal: that is $a = b + c$ in Figure \ref{fig:switch} above.
A train track with integral weights defines a simple
closed multicurve on the surface, and we say that this curve is
\emph{carried} by $\tau$.  We shall write $\C(\tau)$ for the subset of
the curve complex consisting of the simple closed curves carried by
$\tau$.

More generally, for weights with non-integral values, a weighted train
track defines a measured foliation on the surface.  We say that the
corresponding foliation is \emph{carried} by the train track.  We
shall write $\mathcal{MF}(S)$ for the space of measured laminations on
the surface $S$, and $P(\tau)$ for the set of measured foliations
carried by $\tau$.  The set $P(\tau)$ is projectively invariant, and
we shall write $\overline{P(\tau)}$ for its projectivization in
$\PMF(S)$, the space of projective measured foliations on the surface.
The set $\overline{P(\tau)}$ is a polytope in $\PMF(S)$.  Let
$V(\tau)$ be the set of vertices of $\overline{ P(\tau) }$. Every
$v \in V(\tau)$ gives a \emph{vertex cycle}: a simple closed curve,
carried by $\tau$, that puts weight at most two on each branch of
$\tau$.  By abuse of notation, we shall also write $V(\tau)$ for the
corresponding set of simple closed curves in $\C(S)$.

We say a train track $\tau$ is \emph{maximal} if every complementary
region is a triangle or a monogon containing a single puncture.  Every
surface which contains a maximally filling pair of curves also contains
a maximal train track, as given a choice of orientations on the
curves, one may smooth the intersections compatibly to produce a
maximal train track.

\subsection{Non-classical interval exchange transformations}

Train tracks are closely related to non-classical interval exchanges,
which we now describe, see for example \cite{boissy-lanneau}.

\begin{definition} A \emph{non-classical interval exchange} consists
of the following:

\begin{enumerate}

\item Given positive numbers $\delta$ and $\e$, let $B_0$ be a
Euclidean rectangle $[0, \delta] \cross [0, \e]$ which we shall call
the \emph{base rectangle}.  We will refer to the sides of length $\e$
as the \emph{vertical} sides and the sides of length $\delta$ as the
\emph{horizontal} sides.  We shall call $\{ 0 \} \cross [0, \e]$ the
\emph{initial vertical side} and $\{ \delta \} \cross [0, e]$ the
\emph{terminal vertical side}.  We shall call the horizontal side
$[0, \delta] \cross \{ \e \}$ the \emph{upper side} and label it
$I_+$, and the other side $[0, \delta] \cross \{ 0 \}$ the \emph{lower
  side} and label it $I_-$.

\item Let $B_1, \ldots B_n$ be a finite collection of metric Euclidean
rectangles, which we shall call \emph{bands}.  Each band $B_i$ has one
pair of opposite sides called the horizontal sides, and one pair
called the vertical sides.  The length of the horizontal sides of the
band $B_i$ is called the \emph{width} $w_i$ of the band $B_i$, and the
total width of the bands $\sum w_i $ is equal to $L$.

\item For each horizontal side of a band there is a Euclidean isometry
from the horizontal side to the disjoint union $I_+ \sqcup I_-$, with
the following properties:

\begin{enumerate}

\item The images of the interiors of any two horizontal sides are
disjoint.

\item The quotient space obtained from gluing a band to the base
rectangle along its two horizontal sides is orientable,
i.e. homeomorphic to an annulus and not a M\"obius band.

\end{enumerate}

If a band has one horizontal side mapped to $I_+$, and the other
mapped to $I_-$, then we call it an \emph{orientation preserving
  band}.  Otherwise we call it an \emph{orientation reversing band}.

\end{enumerate}

\end{definition}

Every generalized interval exchange transformation gives rise to a
measured train track with a single vertex, by collapsing the base
rectangle to a vertex, and each band to a edge of weight $w_i$.

In the description above we attached the bands to the horizontal sides
of the rectangle, but we could instead have attached them all to the
vertical sides.

\subsection{Quadratic differentials and transversals}

A pseudo-Anosov map $g$ preserves a unique geodesic axis $\gamma$ in
Teichm\"uller space.  Choose a unit speed parameterization $\gamma_t$
such that $g(\gamma_0) = \gamma_t$ for some positive $t$.  Let $q$ be
the quadratic differential at $\gamma_0$ determined by the geodesic
$\gamma$.  The quadratic differential $q$ determines a flat structure
on $S$ which we shall denote $S_q$.  Let $F_+$ be the vertical
measured foliation determined by $S_q$ , whose projectivization
$\overline{F_+}$ is the stable invariant projective measured foliation
for $g$.  Similarly, let $F_-$ be the horizontal measured foliation
for $S_q$, whose projectivization $\overline{F_-}$ is the unstable
invariant foliation for $g$.

A \emph{transversal} $t$ for a foliation $F$ in $S$ is an embedded arc
in $S$ which is disjoint from the singularities of $F$, and is
transverse to the leaves of $F$.  We say a transversal for $F_+$ is
\emph{horizontal} if it is a horizontal geodesic in the flat metric
$S_q$ corresponding to $q$.  Similarly we say a transversal for $F_-$
is \emph{vertical} if it is a vertical geodesic.

The transversal $t$ determines a non-classical interval exchange.
This may be thought of as arising by cutting the surface along the
singular flow lines, and then completing each maximal open interval of
non-singular flow lines by adding a pair of non-singular edges at each
end.  We now give a detailed description of this construction for
horizontal transversals for the vertical foliation $F_+$.

Let $t$ be a horizontal transversal for the vertical foliation $F_+$.
As $t$ is a horizontal geodesic disjoint from the singular set of $F$,
there is an $\e > 0$ such that the Euclidean rectangle
$t \cross [-\e, \e]$ is also disjoint from the singular set.  We shall
choose this to be the base rectangle $B_0$ in the non-classical
interval exchange map.  Recall that one horizontal side of the base
rectangle is called $I_+$ and the other horizontal side is called
$I_-$.

A \emph{flow line} $\ell$ is the closure of a connected component of a
leaf of the foliation $F$ in $S \setminus B_0$.  A flow line is
\emph{non-singular} if it is a properly embedded arc with distinct
endpoint in the horizontal boundary of $B_0$.  A flow line is
\emph{singular} if it contains a singular point of the foliation $F$.
A \emph{tripod} is the topological space homeomorphic to the connected
and simply connected graph consisting of three edges meeting at a
common vertex.  The foliation $F_+$ contains finitely many singular
points, all of which are trivalent, so there are finitely many
singular flow lines, which are tripods, properly embedded with
distinct endpoints in the horizontal boundary of $B_0$.  All other
flow lines are non-singular, and this determines a first return map on
$I_+ \sqcup I_-$, defined on all but finitely many points, which is an
isometry on connected intervals for which it is defined.  Furthermore,
as $S$ is orientable, any subsurface consisting of $B_0$ and any
collection of non-singular flow lines is orientable, so if we choose a
band for each maximal subinterval of $I_+ \sqcup I_-$ on which the
first return map is defined, this is a non-classical interval exchange
transformation.

The same construction applies to a vertical transversal for the
horizontal foliation $F_-$, but with the vertical and horizontal
directions swapped.

\subsection{Rauzy induction}

We say a transversal $t = [t_0, t_1]$ for a foliation $F$ is
\emph{admissible} if it determines a non-classical interval exchange
such that the terminal point $t_1$ is the endpoint of a singular flow
line.  In this case, let $t_1' \in [t_0, t_1)$ be the closest endpoint
of a singular flow line to $t_1$.  The transversal $[t_0, t'_1]$ is a
subset of $[t_0, t_1]$ and determines a new non-classical interval
exchange, again with the property that the terminal endpoint meets a
singular flow line.  An example of this is illustrated in Figure
\ref{fig:rauzy}.

\begin{figure}[H]
\begin{center}
\begin{tikzpicture}[scale=0.8]

\tikzstyle{point}=[circle, draw, fill=red, inner sep=1pt]

\draw [rounded corners] (12, 1) -- (12, 3) -- (9, 3) -- (9, 1);

\draw [rounded corners] (11, 1) -- (11, 2) -- (10, 2) -- (10, 1);

\draw [rounded corners] (12, 0) -- (12, -1) -- (7, -1) -- (7, 0);

\draw [rounded corners] (11.5, 0) -- (11.5, -0.5) -- (7.5, -0.5) --
(7.5, 0);

\draw [thick, red] (6, 0) node [black, below right] {$I_-$} -- (12, 0)
-- (12, 1) -- (6, 1) node [black, above right] {$I_+$} -- cycle;

\draw [thick, red, dashed] (11.5, 0) -- (11.5, 1);

\draw (6, 0) node [red, point, label=above right:${t_0}$] {};

\draw (11.5, 0) node [red, point, label=above left:${t'_1}$] {};

\draw (12, 0) node [red, point, label=above right:${t_1}$] {};

\begin{scope}[xshift=10cm]

\draw [rounded corners] (12, 1) -- (12, 3) -- (9, 3) -- (9, 1);

\draw [rounded corners] (11.5, 1) -- (11.5, 2.5) -- (9.5, 2.5) --
(9.5, 1);

\draw [rounded corners] (11, 1) -- (11, 2) -- (10, 2) -- (10, 1);

\draw [rounded corners] (12, 1) -- (12, -1) -- (7, -1) -- (7, 0);

\draw [rounded corners] (11.5, 0) -- (11.5, -0.5) -- (7.5, -0.5) --
(7.5, 0);

\draw [thick, red] (6, 0) node [black, below right] {$I'_-$} -- (11.5,
0) -- (11.5, 1) -- (6, 1) node [black, above right] {$I'_+$} -- cycle;

\draw (6, 0) node [red, point, label=above right:${t_0}$] {};

\draw (11.5, 0) node [red, point, label=above left:${t'_1}$] {};

\end{scope}

\end{tikzpicture}
\end{center}
\caption{Rauzy induction.} \label{fig:rauzy}
\end{figure}
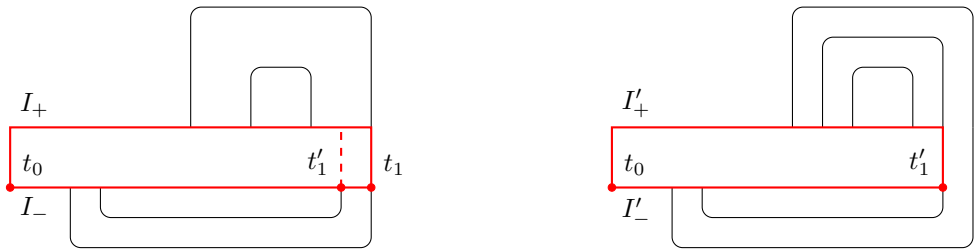

This process is called \emph{Rauzy induction} and is
a special case of splitting a train track.  For general non-classical
interval exchanges, Rauzy induction need not always be defined, but
the following result shows that it is always defined for the invariant
foliations of a pseudo-Anosov map.

\begin{proposition} \label{prop:rauzy} %
Let $g$ be a pseudo-Anosov map on a surface $S$, let $q$ be a choice
of quadratic differential corresponding to the invariant Teichm\"uller
geodesic, and let $S_q$ be the corresponding flat surface. Let
$t = t^1$ be an admissible horizontal transversal.  Then there is an
infinite sequence of horizontal transversals $( t^n )_{n \in \N}$, in
which $t^{n+1}$ is obtained from $t^n$ by Rauzy induction.
Furthermore, the length of the horizontal transversals $t^n$ tends to
zero as $n$ tends to infinity.
\end{proposition}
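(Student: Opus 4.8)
The plan is to establish two things: that Rauzy induction is always defined (so the infinite sequence exists), and that the transversal lengths shrink to zero. For the first claim, I would show that at each stage the terminal endpoint $t_1^n$ meets a singular flow line, guaranteeing that there is always a well-defined closest singular endpoint $t_1'$ to the left of $t_1^n$, hence the next transversal $t^{n+1} = [t_0, t_1']$ in the sequence. This is the definition of admissibility being preserved under the induction step. The key point is that after removing the rightmost band (the one terminating at $t_1^n$), the new terminal point $t_1'$ is again the endpoint of a singular flow line by construction, since we chose it to be the \emph{closest} such endpoint; one must check that the resulting combinatorial data is still a valid non-classical interval exchange, i.e. that no degeneracy occurs where the interval collapses to a point. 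Because $g$ is pseudo-Anosov, its invariant foliation $F_+$ has no saddle connections (leaves joining two singularities) that would obstruct the first return map, and this minimality is what keeps the process running indefinitely.

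For the length claim, the natural approach is to argue that the lengths $\ell_n := |t^n|$ form a strictly decreasing sequence of positive reals, so they converge to some limit $\ell_\infty \ge 0$, and then to rule out $\ell_\infty > 0$. I would suppose for contradiction that $\ell_n \to \ell_\infty > 0$. The total amount removed is $\sum_n (\ell_n - \ell_{n+1}) = \ell_1 - \ell_\infty < \infty$, so the widths of the successively discarded bands tend to zero. I would then use the fact that the horizontal foliation $F_-$ (transverse to the horizontal transversals) is minimal and uniquely ergodic, coming from a pseudo-Anosov, so the return times to the transversal under the horizontal flow are controlled: a transversal of length bounded below by $\ell_\infty$ must carry a definite amount of transverse measure, and the first-return structure cannot have bands of arbitrarily small width accumulating while the base interval stays macroscopic. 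Concretely, minimality of $F_+$ forces the singular flow-line endpoints to be \emph{dense} in any transversal of positive length, so infinitely many distinct singular endpoints lie in $[t_0, t_1^n]$ with a uniform lower gap if $\ell_\infty > 0$ were to persist, contradicting that the removed widths vanish.

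The main obstacle I anticipate is the second part, ruling out $\ell_\infty > 0$, since the mere fact that Rauzy induction is defined forever does not by itself force the lengths to zero — one genuinely needs the dynamical input that $F_+$ is minimal (indeed uniquely ergodic) because it is the invariant foliation of a pseudo-Anosov map. The cleanest way to package this may be to invoke that the singular leaves are dense: for a minimal measured foliation, every half-leaf is dense, so in particular the countably many singular flow lines have endpoints accumulating everywhere on a positive-length transversal. This density is what guarantees that $t_1'$ can always be chosen and that the gaps $\ell_n - \ell_{n+1}$ cannot stay bounded below, while the convergence $\sum (\ell_n - \ell_{n+1}) < \infty$ then pins $\ell_\infty$ to zero. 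If density alone is delicate to convert into the quantitative statement, the fallback is to relate Rauzy induction directly to the Teichm\"uller geodesic flow: each step corresponds to a bounded flow along $\gamma$, the infinite sequence corresponds to the forward geodesic ray, and the shrinking of transversal length is exactly the exponential contraction of the horizontal foliation under the $g$-action, giving $\ell_n \to 0$ at a definite rate.
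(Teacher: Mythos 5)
Your first paragraph is fine in outline, and it identifies the right input: the absence of saddle connections for pseudo-Anosov invariant foliations is what keeps Rauzy induction defined forever. Note, though, that the paper proves nothing here from scratch: it deduces Proposition \ref{prop:rauzy} immediately from Lemma \ref{lemma:saddle} (\cite{farb-margalit}*{Lemma 14.11}) together with Proposition \ref{prop:bl} (\cite{boissy-lanneau}*{Proposition 4.2}), the latter being \emph{exactly} the statement that no saddle connections implies both the infinite induction sequence and the decay of the transversal lengths to zero. So your second and third paragraphs amount to an attempt to reprove Boissy--Lanneau's result, and that is where there is a genuine gap.

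The gap is the claimed contradiction for $\ell_\infty > 0$. The candidate cut points at stage $n$ are not a dense set: they are the finitely many endpoints of the singular flow lines of the stage-$n$ interval exchange, i.e.\ the \emph{first} intersections of the singular half-leaves with the rectangle over $t^n$ (finitely many, one per prong direction). What minimality makes dense is the set of \emph{all} crossings of the singular half-leaves with the transversal over all time, and those have nothing to do with where the induction cuts. Moreover, even granting density, density produces arbitrarily \emph{small} gaps, not the ``uniform lower gap'' you assert, and small gaps are perfectly consistent with the removed amounts $\ell_n - \ell_{n+1}$ tending to zero; so no contradiction follows either way the sentence is read. The implication ``induction defined forever $\Rightarrow$ $\ell_n \to 0$'' is genuinely non-formal: in the abstract IET setting it is Keane's theorem, proved by a combinatorial winner/loser analysis that uses the no-connection hypothesis a second time. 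If you want a self-contained geometric argument, minimality can be made to work, but differently: if $\ell_n \downarrow \ell_\infty > 0$, then by minimality each singular half-leaf first hits the rectangle over the limiting transversal $t^\infty := \bigcap_n t^n$ at some finite time; since the rectangle over $t^n$ contains the one over $t^\infty$, every stage-$n$ flow-line endpoint is a crossing of a fixed \emph{compact} initial segment of a singular leaf with the horizontal sides over $t^1$, and there are only finitely many such crossings --- yet the cut points $t_1^{n+1}$ form an infinite strictly decreasing sequence of such points, a contradiction. Finally, your Teichm\"uller-flow fallback is not a proof as stated: the transversals $t^n$ all share the fixed left endpoint $t_0$, so the self-similarity provided by $g$ does not act on your sequence, and making ``each Rauzy step corresponds to bounded flow along $\gamma$'' precise is Veech's zippered-rectangle renormalization theory, which is of comparable depth to the statement being proved. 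The efficient fix is the paper's: cite \cite{boissy-lanneau}*{Proposition 4.2}.
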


Replacing $g$ by $g^{-1}$ in Proposition \ref{prop:rauzy} switches the
horizontal and vertical directions, so the result also holds for a
vertical transversal for the horizontal foliation $F_-$.  Proposition
\ref{prop:rauzy} is an immediate consequence of the next two results,
Lemma \ref{lemma:saddle} and Proposition \ref{prop:bl}.

A \emph{saddle connection} for a flat surface is a geodesic connecting
two singular points, whose interior is disjoint from the singular set.
we say a saddle connection is vertical if it is contained in the
vertical foliation.  The following fact is well known, see for example 
\cite{farb-margalit}*{Lemma 14.11}.

\begin{lemma} \label{lemma:saddle} \cite{farb-margalit}*{Lemma 14.11} %
The invariant foliations for a pseudo-Anosov map do not contain any
saddle connections.
\end{lemma}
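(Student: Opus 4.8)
The plan is to exploit the affine action of $g$ on the flat surface $S_q$ together with the fact that saddle connections are locally finite. First I would recall that a pseudo-Anosov map acts on $S_q$ as an affine homeomorphism whose derivative, in the coordinates for which the leaves of $F_-$ and $F_+$ are horizontal and vertical, is the diagonal matrix $\mathrm{diag}(\lambda, \lambda^{-1})$ for some $\lambda > 1$. In particular $g$ permutes the finite singular set of $q$, carries leaves of the vertical foliation to leaves of the vertical foliation, and sends geodesics to geodesics. Consequently $g$ maps every vertical saddle connection to a vertical saddle connection, multiplying its length by a fixed factor; after replacing $g$ by $g^{-1}$ if necessary, I may assume this factor is $\lambda^{-1} < 1$.

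Next I would invoke the standard local finiteness of saddle connections on a flat surface with finitely many cone points: for every $L > 0$ there are only finitely many saddle connections of length at most $L$. With this in hand the lemma follows quickly. Suppose, for contradiction, that $F_+$ contained a saddle connection $\sigma$; by hypothesis it is vertical, of some length $\ell > 0$. Then for each $n \ge 0$ the arc $g^n(\sigma)$ is again a vertical saddle connection, of length $\lambda^{-n}\ell \le \ell$. Since $\lambda > 1$ these lengths are pairwise distinct, so the arcs $g^n(\sigma)$ are infinitely many distinct saddle connections all of length at most $\ell$, contradicting local finiteness. Hence $F_+$ has no saddle connection. Applying the identical argument to the pseudo-Anosov map $g^{-1}$, whose invariant foliations are $F_+$ and $F_-$ with the expanding and contracting directions interchanged, rules out saddle connections in $F_-$ as well.

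I expect the main point requiring care to be the first step: verifying that $g$ really does send vertical saddle connections to vertical saddle connections with a definite contraction factor. This rests on three facts that I would state explicitly, namely that $g$ is affine with diagonal derivative, that it preserves the vertical foliation, and that it permutes the finite set of singularities, so that the image of a vertical geodesic segment joining two singularities is again such a segment, with vertical length scaled by $\lambda^{-1}$. The local finiteness of saddle connections is a well-known property of flat surfaces, which I would simply cite. Everything else is a short counting argument.
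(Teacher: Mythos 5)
Your proof is correct. Note that the paper does not actually prove this statement: it quotes it from Farb--Margalit (their Lemma 14.11) as a well-known fact, and your argument is essentially the standard proof given in that reference --- since $g$ acts affinely on $S_q$ with derivative $\mathrm{diag}(\lambda,\lambda^{-1})$ and permutes the finitely many singularities, a vertical saddle connection of length $\ell$ would yield infinitely many distinct vertical saddle connections of length at most $\ell$ (indeed of lengths $\lambda^{-n}\ell \to 0$), contradicting the local finiteness of saddle connections on a finite-area flat surface. The only hypothesis worth flagging is that local finiteness requires $S_q$ to have finite area, which holds automatically here because $q$ is the quadratic differential of a Teichm\"uller geodesic on a finite-type surface.
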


In this case, Rauzy induction is always defined, and gives an infinite
sequence of transversals $t_n$ determining non-classical interval
exchanges:

\begin{proposition} \cite{boissy-lanneau}*{Proposition
  4.2} \label{prop:bl} %
If the vertical foliation $F_+$ has no saddle connections, then Rauzy
induction starting at a horizontal transversal $t^1$ gives an infinite
sequence of transversals $(t_n)_{n \in \N}$ determining non-classical
interval exchanges, such that the horizontal length of the
transversals $t_n$ tends to zero as $n$ tends to infinity.
\end{proposition}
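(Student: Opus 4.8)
The plan is to adapt the classical theory of Rauzy--Veech induction for interval exchange transformations, reading the hypothesis ``$F_+$ has no vertical saddle connection'' as the Keane-type nondegeneracy condition that forces the induction to be infinitely renormalizable. First I would fix, for the starting horizontal transversal $t^1$, the first-return map of the vertical flow to the base rectangle $t^1 \cross [-\e, \e]$, and record its finite set of \emph{singular endpoints}: the points where the vertical separatrices (the prongs emanating from the finitely many singularities of $F_+$) first meet $I_+ \sqcup I_-$. Tracing a separatrix and recording its successive crossings of the (shrinking) base rectangle gives the return dynamics, and a vertical saddle connection is exactly the coincidence of two distinct separatrices along a single vertical leaf. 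Thus the hypothesis is precisely that distinct prongs never meet, and that each separatrix returning to the rectangle has an infinite, aperiodic orbit. Since $F_+$ is saddle-connection-free it is minimal, so every prong that does not run into a puncture recurs to the base rectangle.

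Next I would show that the induction never stalls, giving the infinite sequence $(t^n)_{n \in \N}$. At step $n$ the transversal is $t^n = [t_0, t_1^n]$ with $t_1^n$ a singular endpoint, and since the transversal is the horizontal extent of the base rectangle, \emph{every} returning prong has its first-return point in $[t_0, t_1^n]$, with $t_1^n$ the rightmost such point. The no-saddle-connection hypothesis makes these first-return points pairwise distinct, for a coincidence of two prong endpoints at one point of the transversal would join their two singularities along a single vertical leaf, i.e.\ a vertical saddle connection. As there are at least two returning prongs, there is a singular endpoint strictly to the left of $t_1^n$, and the nearest one furnishes $t_1^{n+1}$; the new transversal $t^{n+1} = [t_0, t_1^{n+1}] \subsetneq t^n$ is again admissible because $t_1^{n+1}$ is by construction a singular endpoint. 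Hence the transversals form an infinite strictly nested sequence.

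Finally I would prove the length statement, which I expect to be the main obstacle. The nested intervals $t^n$ share the left endpoint $t_0$, so their lengths decrease to some $\ell \ge 0$; I argue by contradiction that $\ell = 0$. If $\ell > 0$, set $t_\infty := \lim t_1^n > t_0$. The removed pieces $(t_1^{n+1}, t_1^n]$ then shrink to zero, so the rightmost singular endpoints $t_1^n \searrow t_\infty$ accumulate at $t_\infty$. Because there are only finitely many prongs, after passing to a subsequence a single separatrix realizes $t_1^n$ for all $n$; as the base rectangle shrinks to $[t_0, t_\infty] \cross [-\e, \e]$, this separatrix travels longer and longer before first returning, with return points tending to $t_\infty$. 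The crux is then a compactness argument on the flat surface $S_q$: the corresponding leaf segments, being of bounded transverse width and avoiding the shrinking interval $(t_0, t_\infty)$ until their return, subconverge to a complete vertical leaf through $t_\infty$ that limits onto a singularity, and I would extract from this limit an honest vertical leaf joining two singularities, i.e.\ a vertical saddle connection. This contradicts the hypothesis, forcing $\ell = 0$ and so the horizontal lengths of $t^n$ tend to zero.

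The delicate point throughout is the last limiting step, namely upgrading the mere accumulation of singular first-return points at $t_\infty$ into a genuine saddle connection; making this precise requires controlling the geometry of the limiting leaf on $S_q$ and is exactly the technical heart reproduced from \cite{boissy-lanneau}. The remaining steps are bookkeeping about return maps once the no-saddle-connection hypothesis is translated into the distinctness of prong endpoints.
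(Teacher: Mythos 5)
The paper offers no proof of this statement at all: it is imported verbatim as Proposition 4.2 of \cite{boissy-lanneau} and used as a black box (together with Lemma \ref{lemma:saddle}) to deduce Proposition \ref{prop:rauzy}. So there is no proof of the authors' to compare yours against, and I can only assess your sketch on its own terms. Its first two stages are sound and follow the standard Keane-type argument: under the no-saddle-connection hypothesis, distinct separatrices have distinct first-hitting points on each transversal (a coincidence on the same horizontal side would force one separatrix to run into another singularity, and a top/bottom coincidence at the same horizontal position would close up, through the base rectangle, into a vertical leaf joining two singularities), and since the finitely many prongs all recur by minimality and there are at least two of them, there is always a singular endpoint strictly to the left of $t_1^n$, so the induction never halts.

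The genuine gap is exactly where you flag it, and as sketched the step would fail. Your ``compactness argument'' has nothing useful to converge to: the leaf segments you propose to limit are initial segments of one and the same separatrix $p$, so their Hausdorff limit is simply the closure of $p$, and no saddle connection drops out of compactness alone (if $p$ were dense, that limit would be all of $S_q$). But the data you have already assembled proves something stronger and simpler. Since $t_1^{n_k}$ is the \emph{first} hit of $p$ on the rectangle over $[t_0, t_1^{n_k}] \supset [t_0, t_\infty]$, and since the hitting times $T_k$ are strictly increasing and in fact tend to infinity --- consecutive crossings of the horizontal sides are separated by a definite time, at least $\min(2\e, d_0)$, where $d_0 > 0$ is the distance from the compact base rectangle to the finite singular set and the lower bound comes from a Gauss--Bonnet argument ruling out singularity-free bigons bounded by a vertical arc and a horizontal arc --- the separatrix $p$ \emph{never} meets the fixed positive-length segments $[t_0, t_\infty] \times \{\pm\e\}$. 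This directly contradicts the minimality (density of every leaf and separatrix) that you yourself invoked at the start; no limiting construction and no extraction of a saddle connection is needed. So the skeleton of your argument is the right one, but the ``technical heart'' should be dispatched by this density argument (equivalently, by the structure theorem that the closure of a non-dense leaf is a subsurface bounded by saddle connections), and the lower bound on return times, which you assert only implicitly with ``travels longer and longer before first returning,'' is the one point that genuinely requires justification.
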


\subsection{Maximally filling curves}

Given a non-singular point $x$ in the flat surface $S_q$ there are
positive numbers $\delta$ and $\e$ such that $x$ is the bottom left
corner of an embedded Euclidean rectangle $B_0 = s \cross t$, disjoint
from the singular set, where $\norm{s} = \delta$ and $\norm{t} = \e$.
In particular, $s$ is a horizontal transversal for the vertical
foliation $F_+$ and $t$ is a vertical transversal for the horizontal
foliation $F_-$.  We may therefore use $B_0$ as the base rectangle for
a non-classical interval exchange in both the horizontal and vertical
directions.  Furthermore, possibly after passing to a subrectangle
$[0, \delta'] \cross [0, \e'] \subseteq [0, \delta] \cross [0, \e]$,
we may assume that both non-classical interval exchanges are
admissible.  By abuse of notation we shall relabel the new rectangle
as $B_0$, and relabel the sides as $s$ and $t$ of lengths $\delta$ and
$\e$.

For the vertical non-classical interval exchange, let $d_+$ be the
union of the singular flow lines for the vertical foliation $F_+$,
together with the flow lines incident to the initial vertical edge
$\{ 0 \} \cross [0, \e]$.  For the horizontal non-classical interval
exchange, let $d_-$ be the union of the singular flow lines for the
horizontal foliation $F_-$, together with the flow lines incident to
the initial horizontal edge $[0, \delta] \cross \{ 0 \}$.

This determines a cell decomposition of the surface as follows.

\begin{enumerate}

\item The vertices $V$ are the singular points of $S_q$, together with
all intersection points $d_+ \cap d_-$, and all endpoints of $d_+$ and
$d_-$ with the base rectangle $B_0$.

\item The edges $E$ are the connected components of
$( \partial B \cup d_+ \cup d_- ) \setminus V$.

\item The $2$-cells or faces $F$ are the connected components of
$S \setminus (\partial B \cup d_+ \cup d_-)$.

\end{enumerate}

The collection of faces consists of the base rectangle $B_0$, together
with a collection of discs, each of which is a Euclidean rectangle in
$S_q$ with two vertical edges, each of which is contained either in
$d_+$ or the vertical boundary of $B_0$, and two horizontal edges,
each of which is contained either in $d_-$ or the horizontal boundary
of $B_0$.

The vertices are one of the following types:

\begin{enumerate}

\item Trivalent vertices in the interiors of the sides of $B_0$.

\item $4$-valent vertices corresponding to the corners of $B_0$, and
all non-singular intersection points of $d_+ \cap d_-$.

\item $1$-valent and $6$-valent vertices corresponding respectively to
the $1$-prong and trivalent singular points of $S_q$.

\end{enumerate}

\begin{proposition} \label{prop:twice} %
Let $v$ be a simple closed curve carried by the vertical train track,
which passes over every vertical band at least twice.  Let $w$ be a
horizontal curve carried by the horizontal train track, which passes
over every horizontal band at least twice.  Then $v$ and $w$ are
maximally filling.
\end{proposition}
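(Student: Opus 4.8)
The plan is to realize $v$ and $w$ simultaneously in the flat structure $S_q$ as a nearly vertical/horizontal grid, and then to read off the complementary regions from the cell decomposition built above from $\partial B_0 \cup d_+ \cup d_-$. Since $v$ is carried by the vertical train track, I would realize it as a disjoint union of arcs running vertically (parallel to the leaves of $F_+$) through the vertical bands, joined up inside the base rectangle $B_0$; dually, $w$ is realized by arcs running horizontally through the horizontal bands and joined inside $B_0$. With this realization $v$ and $w$ are transverse and cross at right angles away from $B_0$, so $v \cap w$ is a finite set of transverse double points and $v \cup w$ is a $4$-valent graph on $S$ whose complementary faces are flat regions with vertical subarcs of $v$ and horizontal subarcs of $w$ on their boundary.

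The crucial use of the hypothesis is the following. Passing over every vertical band at least twice forces $v$ to run alongside every singular vertical flow line and every vertical flow line incident to the initial vertical side of $B_0$, that is, alongside every component of $d_+$; symmetrically, passing over every horizontal band at least twice forces $w$ to run alongside every component of $d_-$. Consequently $v \cup w$ fellow travels the entire $1$-skeleton $\partial B_0 \cup d_+ \cup d_-$ of the cell decomposition, and I would show that each complementary region of $v \cup w$ contains at most one singular point of $S_q$ and is bounded by an alternating sequence of (vertical) subarcs of $v$ and (horizontal) subarcs of $w$ meeting at interior angle $\tfrac{\pi}{2}$ at each corner.

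To classify the regions I would then apply combinatorial Gauss--Bonnet. For a complementary disc $P$ with $k$ right-angled corners and interior cone points of cone angles $c_1, \dots, c_m$, flatness and geodesic boundary give
$$k \cdot \frac{\pi}{2} + \sum_{j} (2\pi - c_j) = 2\pi.$$
Since the invariant laminations are maximal, the only cone angles occurring in $S_q$ are $2\pi$ at regular points, $3\pi$ at the trivalent singularities, and $\pi$ at the $1$-prong singularities sitting at the punctures. A region with no singular point in its interior therefore has $k = 4$ and is a square; a region containing a single trivalent singularity has $k = 6$ and is a hexagon; and a region containing a single $1$-prong puncture has $k = 2$, i.e.\ is a bigon containing exactly one puncture. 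The same identity excludes all other possibilities: an unpunctured bigon would require $k = 2$ with no cone point, forcing $\pi = 2\pi$; a monogon would require a cone-angle deficit of $\tfrac{3\pi}{2}$, which is unavailable since every deficit here is an integer multiple of $\pi$; and an octagon or larger region would require two or more trivalent singularities in a single face, which the previous step has ruled out. This is precisely the assertion that $v$ and $w$ are in minimal position and maximally filling.

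The step I expect to be the main obstacle is the second paragraph: converting the carrying hypothesis \emph{over every band at least twice} into the precise geometric statement that $v$ shadows every component of $d_+$ (and $w$ every component of $d_-$), with no complementary region of $v \cup w$ meeting more than one singularity. The delicate point is the control inside the base rectangle $B_0$, which is an artifact of the interval-exchange construction and must be shown to be cut by the strands of $v$ and $w$ into squares rather than surviving as a large region or producing non-transverse corners at the U-turns of the strands; this is exactly the place where two strands per band, rather than one, are needed.
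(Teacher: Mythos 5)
Your first paragraph and overall scheme coincide with the paper's own proof: the paper also realizes $v$ as vertical flow-line segments in the bands joined by arcs in $B_0$, realizes $w$ dually, and reads off the complementary regions against the cell structure $\partial B_0 \cup d_+ \cup d_-$. The genuine gap is exactly the one you flagged yourself: your second paragraph is where all the mathematical content lives, and as written it is an assertion, not an argument. Here is the missing chain, which is what the paper's figures encode. Inside each vertical band the strands of $v$ are parallel segments running the full height of the band, so the hypothesis of at least two strands per band lets you isotope $v$ so that the two \emph{outermost} strands fellow-travel the two vertical sides of that band, one for each side; this is precisely where ``twice'' (one strand per side) is used. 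By Lemma \ref{lemma:saddle} the invariant foliations have no saddle connections, so each component of $d_+$ is a tripod (or a single separatrix ending at a $1$-prong point) containing exactly one singular point, and each side of each of its edges lies in the vertical boundary of exactly one band; hence every component of $d_+$ is shadowed along its entire length, on both sides, by strands of $v$, and symmetrically every component of $d_-$ is shadowed by strands of $w$. This produces the local pictures: a trivalent singularity has six sectors of angle $\pi/2$, each containing one shadowing $v$-strand crossing one shadowing $w$-strand, so the region containing it is a hexagon; a $1$-prong point has two such sectors, giving a once-punctured bigon; a trivalent or $4$-valent vertex of the cell structure is enclosed in a square. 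In particular no complementary region contains two singular points --- which is the unproved input your Gauss--Bonnet count needs, and without which that count cannot rule out, say, a flat octagon around two trivalent points or an unpunctured annular-looking nullgon around two $1$-prong points.

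Granting that step, your Gauss--Bonnet finish is correct and is a genuinely different, arguably cleaner, way to conclude than the paper's route: the paper simply inspects each type of region (interior to a face, meeting an edge, containing a vertex, containing a singularity) in a sequence of figures, whereas your identity $k\cdot\pi/2+\sum_j(2\pi-c_j)=2\pi$ handles all cases at once and also explains \emph{why} monogons and unpunctured bigons cannot occur, which the paper never makes explicit. One caveat: inside $B_0$ the arcs of $v$ and $w$ are geodesics joining prescribed points on opposite sides of the rectangle, not vertical and horizontal segments, so their crossings there are not right angles and your identity does not literally apply to regions meeting $B_0$. This is repairable --- such regions contain no singular point, every straight $v$-arc in $B_0$ crosses every straight $w$-arc exactly once (a bottom-to-top arc separates the two vertical sides, and two segments meet at most once), so the arcs form a combinatorial grid whose regions, together with the shadowing strands just outside $B_0$, close up into squares --- but it is exactly the delicacy your last paragraph anticipates, and a complete proof has to address it.
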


\begin{proof}
We may isotope $v$ to consist of a union of vertical flow lines,
together with geodesic arcs in $B_0$, each of which has endpoints in
the opposite horizontal sides of $B_0$.  Similarly, we may isotope $w$
to consist of a union of horizontal flow lines, together with geodesic
arcs in $B_0$, each of which has endpoints in the opposite vertical
sides of $B_0$.

The intersections of $v$ and $w$ with each face of the cell structure
for $S_q$ is:

\begin{figure}[H]
\begin{center}
\begin{tikzpicture}

\tikzstyle{point}=[circle, draw, fill=black, inner sep=1pt]

\draw [thick, red] (0, 0) -- (4, 0);
\draw [thick, red] (0, 3) -- (4, 3);

\draw [thick] (0, 0) -- (0, 3);
\draw [thick] (4, 0) -- (4, 3);

\draw (2, -0.5) node {$B_0$};
\draw (0, 0) node [point] {};
\draw (0, 3) node [point] {};
\draw (4, 0) node [point] {};
\draw (4, 3) node [point] {};

\draw (1, 0) node [point] {};
\draw (2.5, 0) node [point] {};

\draw (1.5, 3) node [point] {};
\draw (3, 3) node [point] {};

\draw (0, 0.75) node [point] {};
\draw (0, 1.75) node [point] {};

\draw (4, 1.25) node [point] {};
\draw (4, 2.25) node [point] {};

\draw [thick, blue] (0.25, 0) -- (0.5, 3);
\draw [thick, blue] (0.65, 0) -- (1, 3);
\draw [thick, blue] (1.25, 0) -- (2, 3);
\draw [thick, blue] (1.9, 0) -- (2.5, 3);
\draw [thick, blue] (3.15, 0) -- (3.5, 3);
\draw [thick, blue] (3.5, 0) -- (3.75, 3);

\draw [thick, green] (0, 0.25) -- (4, 0.5);
\draw [thick, green] (0, 0.5) -- (4, 1);
\draw [thick, green] (0, 1) -- (4, 1.5);
\draw [thick, green] (0, 1.5) -- (4, 2);
\draw [thick, green] (0, 2) -- (4, 2.5);
\draw [thick, green] (0, 2.5) -- (4, 2.75);

\begin{scope}[xshift=6cm]

\draw [thick, red] (0, 0) -- (3, 0);
\draw [thick, red] (0, 3) -- (3, 3);

\draw [thick] (0, 0) -- (0, 3);
\draw [thick] (3, 0) -- (3, 3);

\draw (0, 0) node [point] {};
\draw (0, 3) node [point] {};
\draw (3, 0) node [point] {};
\draw (3, 3) node [point] {};

\draw [thick, blue] (1, 0) -- (1, 3);
\draw [thick, blue] (2, 0) -- (2, 3);

\draw [thick, green] (0, 1) -- (3, 1);
\draw [thick, green] (0, 2) -- (3, 2);

\end{scope}

\end{tikzpicture}
\end{center}
\caption{The curves $v$ and $w$.} \label{fig:faces}
\end{figure}
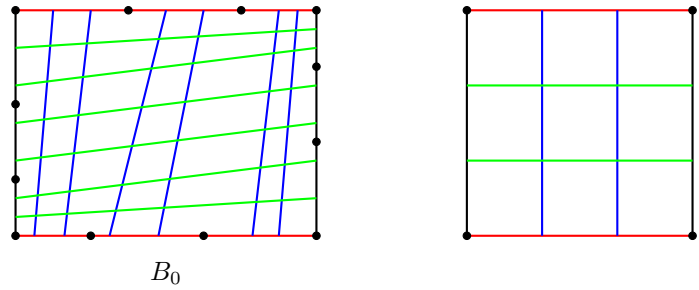

If a complementary region of $v \cup w$ is contained in the interior
of a single face, then it is a square.

If it intersects an edge of the cell structure, but is disjoint from
the vertices, then it is a square.

\begin{figure}[H]
\begin{center}
\begin{tikzpicture}

\tikzstyle{point}=[circle, draw, fill=black, inner sep=1pt]

\draw [thick, red] (0, 0) -- (3, 0);
\draw (0, 0) node [point] {};
\draw (3, 0) node [point] {};

\draw [thick, blue] (1, -1) -- (1, 1);
\draw [thick, blue] (2, -1) -- (2, 1);

\draw [thick, green] (1, -1) -- (2, -1);
\draw [thick, green] (1, 1) -- (2, 1);

\begin{scope}[xshift=6cm, yshift=-1.5cm]

\draw [thick] (0, 0) -- (0, 3);
\draw (0, 0) node [point] {};
\draw (0, 3) node [point] {};

\draw [thick, blue] (-1, 1) -- (-1, 2);
\draw [thick, blue] (1, 1) -- (1, 2);

\draw [thick, green] (-1, 1) -- (1, 1);
\draw [thick, green] (-1, 2) -- (1, 2);

\end{scope}

\end{tikzpicture}
\end{center}
\caption{A complementary region intersecting an
  edge.} \label{fig:square}
\end{figure}
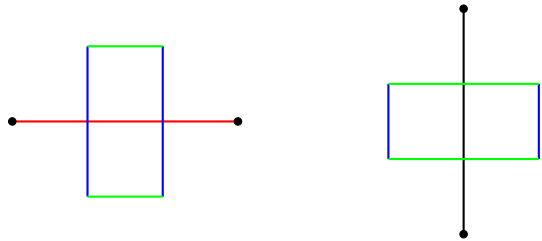

If it contains a vertex which is either trivalent or $4$-valent, then
the component is square.

\begin{figure}[H]
\begin{center}
\begin{tikzpicture}

\tikzstyle{point}=[circle, draw, fill=black, inner sep=1pt]

\draw [thick, red] (-1.5, 0) -- (1.5, 0);
\draw [thick] (0, -1.5) -- (0, 0);
\draw (0, 0) node [point] {};

\draw [thick, blue] (-1, -1) -- (-1, 1);
\draw [thick, blue] (1, -1) -- (1, 1);

\draw [thick, green] (-1, -1) -- (1, -1);
\draw [thick, green] (-1, 1) -- (1, 1);

\begin{scope}[xshift=5cm]

\draw [thick, red] (-1.5, 0) -- (0, 0);
\draw [thick] (0, -1.5) -- (0, 1.5);
\draw (0, 0) node [point] {};

\draw [thick, blue] (-1, -1) -- (-1, 1);
\draw [thick, blue] (1, -1) -- (1, 1);

\draw [thick, green] (-1, -1) -- (1, -1);
\draw [thick, green] (-1, 1) -- (1, 1);

\end{scope}

\begin{scope}[xshift=10cm]

\draw [thick, red] (-1.5, 0) -- (1.5, 0);
\draw [thick] (0, -1.5) -- (0, 1.5);
\draw (0, 0) node [point] {};

\draw [thick, blue] (-1, -1) -- (-1, 1);
\draw [thick, blue] (1, -1) -- (1, 1);

\draw [thick, green] (-1, -1) -- (1, -1);
\draw [thick, green] (-1, 1) -- (1, 1);

\end{scope}

\end{tikzpicture}
\end{center}
\caption{Complementary regions containing vertices.} \label{fig:vertex}
\end{figure}
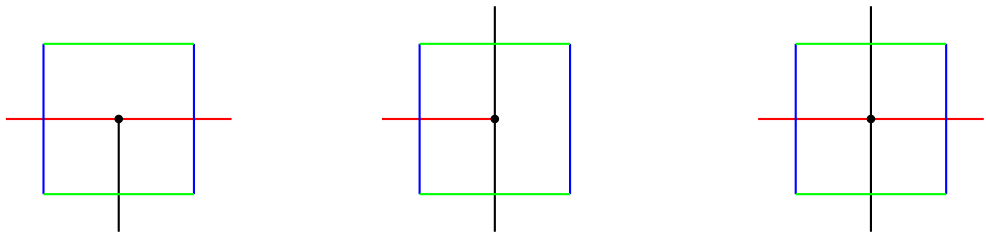

Finally, if it contains a singular point of $S_q$, then this singular
point is either a $1$-prong singularity, or a trivalent singular
point.  In the first case the component is a bigon containing a
puncture, and in the second case the component is a hexagon.  This is
illustrated in Figure \ref{fig:hexagon}.

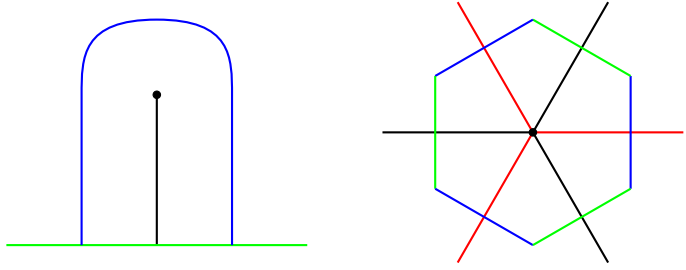
\begin{figure}[H]
\begin{center}
\begin{tikzpicture}

\tikzstyle{point}=[circle, draw, fill=black, inner sep=1pt]

\draw [thick, red] (0, 0) -- (0:2);
\draw [thick] (0, 0) -- (60:2);
\draw [thick, red] (0, 0) -- (120:2);
\draw [thick] (0, 0) -- (180:2);
\draw [thick, red] (0, 0) -- (240:2);
\draw [thick] (0, 0) -- (300:2);
\draw (0, 0) node [point] {};

\draw [thick, blue] (-30:1.5) -- (30:1.5);

\draw [thick, green] (30:1.5) -- (90:1.5);

\draw [thick, blue] (90:1.5) -- (150:1.5);

\draw [thick, green] (150:1.5) -- (210:1.5);

\draw [thick, blue] (210:1.5) -- (270:1.5);

\draw [thick, green] (270:1.5) -- (-30:1.5);

\begin{scope}[xshift=-5cm, yshift=-1.5cm]

\draw (0, 2) node [point] {};
\draw [thick] (0, 0) -- (0, 2);
\draw [thick, green] (-2, 0) -- (2, 0);
\draw [thick, blue]
(-1, 0) --
(-1, 2) .. controls (-1, 2.5) and (-1, 3) ..
(0, 3) .. controls (1, 3) and (1, 2.5) ..
(1, 2) --
(1, 0);

\end{scope}

\end{tikzpicture}
\end{center}
\caption{Complementary regions containing singularities of
  $S_q$.} \label{fig:hexagon}
\end{figure}

\end{proof}

\subsection{Widely separated curves are maximally filling}

It is well known that if the vertical foliation is uniquely ergodic,
then the intersection of the projective measured foliations carried by
the train tracks arising from Rauzy induction is equal to the vertical
foliation.  We provide a proof for the convenience of the reader.

\begin{proposition} \label{prop:intersection} %
Let $S_q$ be a flat surface with uniquely ergodic vertical foliation
$F_+$.  Let $t^n$ be a sequence of nested horizontal transversals in
$S_q$ such that the length of $t_n$ tends to zero as $n$ tends to
infinity.  Let $\tau_n$ be the train track determined by $t^n$.  Then
$\bigcap \overline{P(\tau_n)} = \{ \overline{F_+} \}$ in $\PMF(S)$.
\end{proposition}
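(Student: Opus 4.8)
The plan is to prove both inclusions, with all of the content living in the reverse one. For $\{\overline{F_+}\}\subseteq\bigcap_n\overline{P(\tau_n)}$, note that by construction the vertical foliation $F_+$ is carried by each $\tau_n$: the bands of the non-classical interval exchange determined by $t^n$ are neighbourhoods of the non-singular flow lines of $F_+$, and $F_+$ itself corresponds to the weight system given by the band widths, so $\overline{F_+}\in\overline{P(\tau_n)}$ for every $n$. Moreover, since Rauzy induction is a special case of splitting and $t^{n+1}\subseteq t^n$, the track $\tau_{n+1}$ is carried by $\tau_n$; hence $P(\tau_{n+1})\subseteq P(\tau_n)$ and the polytopes $\overline{P(\tau_n)}$ are nested. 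As $\PMF(S)$ is compact and each $\overline{P(\tau_n)}$ is closed, the intersection is a nonempty compact set containing $\overline{F_+}$, and it remains only to show it contains nothing else.

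So fix a measured foliation $G$ carried by $\tau_n$ for all $n$, and I would show $\overline{G}=\overline{F_+}$ by proving $i(F_+,G)=0$. Realize both $F_+$ and $G$ as carried by $\tau_n$, with their leaves running monotonically through the bands. Along any band both foliations run parallel to its core, so no transverse intersection occurs there; every transverse crossing of $F_+$ with $G$ is therefore confined to the base rectangle $B_0^{(n)}=t^n\cross[-\epsilon,\epsilon]$. Bounding the crossings inside $B_0^{(n)}$ by the $F_+$-transverse measure of its horizontal side $t^n$ times the $G$-mass passing through $B_0^{(n)}$, and normalising $F_+$ to be the flat vertical foliation so that this width equals $\mathrm{length}(t^n)$, one obtains $i(F_+,G)\le \mathrm{length}(t^n)\cdot C_G$ for a constant $C_G$ depending only on the fixed foliation $G$. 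Since $\mathrm{length}(t^n)\to 0$ while $i(F_+,G)$ does not depend on $n$, this forces $i(F_+,G)=0$.

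Finally, $i(F_+,G)=0$ means that $G$ is supported on the leaves of $F_+$. As $F_+$ is uniquely ergodic, hence in particular minimal, its leaves support a unique transverse measure up to scale, namely that of $F_+$ itself, so $G=cF_+$ for some $c>0$ and $\overline{G}=\overline{F_+}$. Combined with the first paragraph this yields $\bigcap_n\overline{P(\tau_n)}=\{\overline{F_+}\}$.

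I expect the main obstacle to be the quantitative estimate of the middle paragraph: controlling the $G$-mass crossing the shrinking base rectangle $B_0^{(n)}$ uniformly in $n$, so that the bound on $i(F_+,G)$ genuinely tends to zero rather than merely staying bounded. This is exactly the mechanism underlying the classical Veech--Masur unique ergodicity criterion, and it is precisely here that the hypothesis $\mathrm{length}(t^n)\to 0$ enters; without it the nested polytopes could stabilise to a positive-dimensional simplex whose vertices record distinct ergodic transverse measures on $F_+$. An alternative route to the same point, which I would keep in reserve, is to pass to the Rauzy--Veech transition matrices $A_n$ and identify $\bigcap_n\overline{P(\tau_n)}$ with the projectivised cone $\bigcap_n A_1\cdots A_{n-1}\,\R^{d}_{\ge 0}$ of invariant transverse measures, where unique ergodicity is the statement that this cone is a single ray; the technical heart is identical, namely that shrinking transversals make the limiting cone capture exactly the invariant measures of $F_+$.
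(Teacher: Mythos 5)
Your strategy coincides with the paper's: both arguments confine all transverse intersections between $F_+$ and a foliation $G$ carried by $\tau_n$ to the base rectangle, note that each crossing arc picks up at most $\mathrm{length}(t^n)$ of $F_+$-measure, and finish with unique ergodicity once $i(F_+,G)=0$ is known. Your first and last paragraphs are correct and agree with the paper. However, the step you yourself flag as ``the main obstacle'' is a genuine gap, and it sits exactly where the paper's proof has its one real idea. As written, the inequality $i(F_+,G)\le \mathrm{length}(t^n)\cdot C_G$ proves nothing until you show that $C_G$, the $G$-mass crossing the base rectangle $B_0^{(n)}$, is bounded independently of $n$: both the rectangle and the carried realization of $G$ change with $n$, so nothing in your argument rules out this mass growing like $1/\mathrm{length}(t^n)$, in which case the bound only says $i(F_+,G)$ is finite.

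The fix is the paper's ratio trick, and it is far more elementary than the Veech--Masur machinery you hold in reserve. Rauzy induction shrinks only the horizontal side of the base rectangle; its height $2\epsilon$ is the same for every $n$. Hence every arc of the carried realization of $G$ crossing $B_0^{(n)}$ has vertical variation $2\epsilon$, i.e.\ contributes $2\epsilon$ to the measure of $G$ against the horizontal foliation $F_-$. Since that realization is everywhere monotone in the vertical direction (vertical in the bands, monotone across $B_0^{(n)}$), no bigons with leaves of $F_-$ can occur, so its total $F_-$-measure is the $n$-independent quantity $i(F_-,G)$; therefore the crossing mass is at most $i(F_-,G)/2\epsilon$ for every $n$, which is precisely the uniform bound you need. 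This is what the paper expresses by saying the ratio $\rho_q=F_-/F_+$ is at least $2\epsilon/F_+(t^n)$ on everything carried by $\tau_n$, so that every element of $\bigcap P(\tau_n)$ has vanishing intersection number with $F_+$. (A purely combinatorial alternative: under the carrying maps $\tau_{n+1}\prec\tau_n$ the induced weight matrices have column sums at least one, so the total transverse weight of $G$ on $\tau_n$ is non-increasing in $n$, and the crossing mass is bounded by a fixed multiple of the total weight on $\tau_1$.) With either patch inserted at that point, your argument is complete and is essentially the paper's proof.
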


\begin{proof}
Let $\tau_n$ be the train track corresponding to $t^n$.  As the
transversals are nested, $t^{n+1} \subset t^n$, each train track is
carried by the previous one, $\tau_{n+1} \prec \tau_n$, and so
$P(\tau_{n+1}) \subseteq P(\tau_n)$.  Every train track $\tau_n$ in
the splitting sequence carries the vertical foliation $F_+$, so
$F_+ \in \bigcap P(\tau_n)$.  We now show
$\bigcap \overline{ P(\tau_n) } = \{ \overline{ F_+ } \}$.

Let $F_+(a)$ be the measure of $a$ with respect to the vertical
measured foliation $F_+$, and let $F_-(a)$ be the measure of $a$ with
respect to the horizontal foliation $F_-$.  Consider the ratio between
these two measure, $\rho_q(a) = F_-(a) / F_+(a)$, which is
projectively invariant, i.e.
$\rho_q(\lambda a) = F_-( \lambda a) / F_+(\lambda a) = \lambda F_-(a)
/ \lambda F_+(a) = \rho_q(a)$, and so defines a function on $\PMF(S)$.

If $a$ is carried by $\tau_n$, then it has a piecewise geodesic
representative in $S_q$ consisting of flow lines in bands, which are
vertical geodesics segments, and geodesic segments in the base
rectangle $B_0$, each of which has one endpoint in the lower
horizontal side, and one endpoint in the upper horizontal side.  Flow
lines have zero vertical measure in $F_+$, and positive horizontal
measure in $F_-$. Each segment in $B_0$ contributes at least $2 \e$ to
the horizontal measure of $a$, and at most $F_+(t_n)$ to the vertical
measure of $a$.  Therefore $\rho_q(a) \ge 2 \e / F_+(t_n)$, where
$F_+(t_n)$ is equal to the length of the horizontal geodesic $t_n$ in
the flat metric $S_q$.  As the length of $t_n$ tends to zero, any
fixed curve $a$ can only be carried by finitely many $\tau_n$.  In
particular, for any simple closed curve $a$, there is an $n$ such that
$a \not \in P(\tau_n)$.

This argument works for elements $\overline{F} \in \PMF(S)$.  Suppose
that $( \overline{a}_n )_{n \in \N}$ is a sequence of simple closed
curves converging to $\overline{F}$.  Then for any representative $F$
in $\mathcal{MF}(S)$ for $\overline{F}$, there is a sequence of
numbers $\lambda_n$ such that $\lambda_n a_n \to F$.  Then
$F_+(\lambda_n a_n) \to F_+(F)$, where $F_+(F)$ is equal to the
intersection number $i(F_+, F)$ of the two measured foliations.  In
particular, if $i(F_+, F) > 0$, then there is an $n$ such that
$F \not \in P(\tau_n)$.  Therefore all foliations in
$\bigcap P(\tau_n)$ have zero intersection number with $F_+$, but as
$F_+$ is uniquely ergodic, the set of all projective measured
foliations with zero intersection number with $F_+$ consists just of
$\{ \overline{ F_+ } \}$.
\end{proof}

We will also use the following results of Masur and Minsky and
Klarreich.  The first says that the vertex sequences arising from
splitting sequences of train tracks are unparameterized quasigeodesics
in the curve complex.

\begin{theorem} \cite{masur-minsky}*{Theorem
  1.3} \label{theorem:splits} %
Given a surface $S$, there are constants $Q$ and $c$ such that the
vertices of a nested train-track splitting sequence form an
unparameterized $(Q, c)$-quasigeodesic in $\C(S)$.
\end{theorem}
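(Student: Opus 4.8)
The plan is to establish the two estimates that together characterize an unparameterized quasigeodesic, working throughout inside the Gromov hyperbolic space $\C(S)$. Write the nested splitting sequence as $\tau_0 \succ \tau_1 \succ \cdots \succ \tau_N$, so that $\tau_{i+1} \prec \tau_i$ for each $i$, and pick a vertex cycle $v_i \in V(\tau_i)$, regarded as a vertex of $\C(S)$. I must produce uniform constants $Q$ and $c$ and a monotone reparameterization under which $i \mapsto v_i$ is a $(Q,c)$-quasigeodesic; concretely this reduces to (A) a uniform upper bound on the distance between consecutive vertices, and (B) a uniform \emph{efficiency} bound showing the sequence makes definite coarse progress and cannot backtrack or stall for arbitrarily long. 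The two main tools are the uniform quasiconvexity of the carried-curve sets $\C(\tau_i)$ and the contraction property of nearest-point projection onto quasiconvex subsets of a hyperbolic space.

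For (A), I would analyze a single elementary split $\tau_i \succ \tau_{i+1}$. Since $\tau_{i+1}$ is carried by $\tau_i$ and vertex cycles place weight at most two on every branch, a vertex cycle of $\tau_{i+1}$ is realized as a curve carried by $\tau_i$ with small, locally controlled weights. A local analysis at the split branch then shows that $v_i$ and $v_{i+1}$ have bounded geometric intersection, so $d_{\mathcal{C}(S)}(v_i, v_{i+1})$ is at most a constant depending only on $S$. This gives the coarse-Lipschitz upper bound, and the nesting $\C(\tau_{i+1}) \subseteq \C(\tau_i)$ records that the path is directed.

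The heart of the argument, and the step I expect to be the main obstacle, is (B). Here I would use that the sets $\C(\tau_i)$ are uniformly quasiconvex and nested: as $i$ grows the carried polytopes $\overline{P(\tau_i)}$ in $\PMF(S)$ nest and shrink (collapsing to a single foliation in the uniquely ergodic model of Proposition \ref{prop:intersection}), and in general the shrinking is governed by subsurface projections. Combined with hyperbolicity, this should force the vertex cycles to advance: for $i < k < j$ the point $v_k$ lies uniformly close to a geodesic $[v_i, v_j]$, and progress along that geodesic is comparable to the amount of splitting performed. The cleanest route is via the Bounded Geodesic Image theorem, showing that a long stretch of splitting accumulates projection distance in some subsurface, so that no splitting sequence can remain inside a bounded-diameter region of $\C(S)$ for arbitrarily many steps. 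The genuine difficulty is making this efficiency estimate \emph{uniform} over all surfaces and all splitting sequences, since a priori a sequence could spend many steps merely rearranging a bounded region; excluding this is exactly the Masur--Minsky subsurface-projection control and cannot be obtained from any purely local count of splits.

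Finally, with (A) and (B) in hand I would reparameterize $i \mapsto v_i$ by coarse progress (discarding the steps on which the vertex cycle does not move), and invoke the standard local-to-global (Morse) principle for hyperbolic spaces to upgrade the coarse-Lipschitz, efficient path to a genuine unparameterized $(Q,c)$-quasigeodesic. The constants $Q$ and $c$ depend only on the hyperbolicity constant of $\C(S)$ and the uniform quasiconvexity constant of the carried sets, hence only on $S$, as claimed.
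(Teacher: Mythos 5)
First, a framing remark: the paper does not prove Theorem \ref{theorem:splits} at all --- it is quoted as Theorem 1.3 of Masur--Minsky \cite{masur-minsky} and used as a black box. So your proposal is in effect an attempt to reprove that theorem, and it must stand on its own; it cannot lean on ``Masur--Minsky subsurface-projection control'' for its key step, which is exactly what it does. Your step (A) is fine: consecutive vertex cycles across a single split have uniformly bounded intersection number and hence uniformly bounded distance in $\C(S)$.

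The genuine gap is step (B), and it is twofold. First, the statement you set out to prove there is false. An unparameterized quasigeodesic is allowed to stall: the monotone reparameterization may be constant for arbitrarily long stretches, and nested splitting sequences really do stall. If for a long stretch every split is concentrated in a proper essential subsurface $W$ (for instance, a sequence splitting towards a lamination supported in $W$), then every vertex cycle lying in $W$ is disjoint from $\partial W$, and the vertex cycles outside $W$ do not change, so the entire vertex set stays within distance about $1$ of $\partial W$ in $\C(S)$ for that whole stretch --- a bounded region for arbitrarily many steps --- while the subsurface projection to $W$ changes enormously. Thus your invocation of the Bounded Geodesic Image theorem has the mechanism exactly backwards: large accumulated subsurface projection is what happens \emph{while} the path in $\C(S)$ stalls, not what forces it to advance, and the claim that ``no splitting sequence can remain inside a bounded-diameter region of $\C(S)$ for arbitrarily many steps'' is not merely unproved but wrong. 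Second, even where you correctly identify what is actually needed --- coarse monotonicity, i.e.\ that for $i<k<j$ the vertex $v_k$ lies uniformly close to a geodesic $[v_i,v_j]$ and in the correct coarse order, a property fully compatible with stalling --- you defer it to the Masur--Minsky machinery, which is circular. Relatedly, the closing appeal to a local-to-global (Morse) principle cannot repair this: that principle applies to parameterized quasigeodesics, a stalling path is not locally a quasigeodesic, and discarding stationary steps does not remove bounded back-and-forth oscillation. The actual Masur--Minsky argument runs instead through the quasiconvexity of the carried sets $\C(\tau_i)$ together with the nesting lemma (Lemma \ref{lemma:nest} of this paper): curves appearing deep in the splitting sequence run over every branch of the earlier tracks, so entire neighbourhoods of them remain carried, and this nesting forces the $\C(\tau_i)$, and with them the vertex cycles, to sit along any geodesic joining the endpoints in a coarsely monotone fashion --- which is the characterization of an unparameterized quasigeodesic in a hyperbolic space.
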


The next says that a $1$-neighbourhood of a simple closed curve which
runs over every branch of a train track is in fact carried by the
train track.

\begin{lemma} \cite{masur-minsky}*{Lemma 3.4} \label{lemma:nest} %
If $\tau$ is a maximal birecurrent train track and $a$ is a simple
closed curve which runs over every branch of $\tau$, then $N_1(a)
\subseteq P(\tau)$.
\end{lemma}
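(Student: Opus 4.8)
The plan is to prove the lemma by producing, for any simple closed curve $b$ disjoint from $a$, a representative isotopic to $b$ that lies in the fibered neighbourhood $N(\tau)$ and runs \emph{monotonically} across the ties; by the standard carrying criterion (see Penner--Harer \cite{penner-harer}) this is equivalent to $b$ being carried by $\tau$, and since $a$ itself runs over every branch it is carried, so every vertex of $N_1(a)$ lands in $P(\tau)$. Throughout I fix a carried representative of $a$ in $N(\tau)$ transverse to the ties. Because $a$ runs over every branch it meets \emph{every} tie at least once, and this single fact is the engine of the whole argument; birecurrence of $\tau$ guarantees that $N(\tau)$ and its tie structure behave as expected and that the carrying criterion applies.

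First I would isotope $b$, keeping it disjoint from $a$, so that it lies inside $N(\tau)$. Since $\tau$ is maximal, each complementary region is either a triangle (a disc) or a monogon containing a single puncture (a once-punctured disc). An outermost arc of $b$ in a triangular region cuts off a disc $D$ whose interior misses $b$; as $a$ never enters the complementary regions, $D$ is disjoint from $a$, so I can push the arc across $D$ and into the outermost strip of $N(\tau)\setminus a$, namely the strip between $\partial N(\tau)$ and the nearest strand of $a$, which is nonempty since $a$ has only finitely many strands. This reduces $|b\cap\partial N(\tau)|$ while preserving disjointness from $a$. The only arcs not removable this way are arcs in a once-punctured monogon separating the puncture from the cusp; I would rule these out by observing that such an arc, together with the monotone strand of $a$ running over the branch at the cusp, forces an intersection with $a$ (or else makes $b$ peripheral), contradicting the hypotheses. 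After these reductions $b$ lies in $N(\tau)$.

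Next I would remove all tangencies of $b$ with the ties. Suppose $b$ has a tie-tangency; choosing an innermost one, the turning arc together with a subinterval $J$ of the tie $t$ bounds a half-disc $H\subset N(\tau)$ whose interior misses $b$. The key point is that $H$ is disjoint from $a$: a strand of $a$ entering $H$ would have to cross $\partial H$, it cannot cross the turning arc (as $a$ and $b$ are disjoint), and it cannot re-enter through $J\subset t$ without acquiring a tie-tangency of its own, which is impossible because $a$ is monotone. Hence $H\cap a=\emptyset$, and I may isotope $b$ across $H$, removing the tangency while staying inside $N(\tau)$ and disjoint from $a$. Iterating eliminates all tangencies, so $b$ runs monotonically across the ties and is therefore carried.

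The main obstacle is the interaction between these two isotopies together with the once-punctured monogon case. I must organise the reductions so that pushing $b$ into $N(\tau)$ and straightening its tie-tangencies both terminate and do not reintroduce each other's defects; the natural bookkeeping is to first minimise $|b\cap\partial N(\tau)|$ among representatives disjoint from $a$, and only then minimise the number of tie-tangencies. The most delicate verification is that no puncture-enclosing arc survives: this is exactly the step where maximality (single-puncture monogons) and disjointness from $a$ must be combined, and it is what distinguishes curves disjoint from $a$ from arbitrary essential curves, which in general need not be carried by $\tau$.
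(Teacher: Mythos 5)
Your overall strategy --- isotope $b$ into the fibered neighbourhood $N(\tau)$ within the complement of $a$ and then straighten it against the ties --- is a genuinely different route from the proof this lemma actually rests on, but as written it has a gap at exactly the point where the content of the lemma lives. The problem is the interaction you yourself flag, and it is fatal rather than a bookkeeping issue. In Step 1 you push \emph{every} outermost arc of $b$ in a triangle region across the cut-off disc $D$, including arcs that separate one cusp of the triangle from the other two (so $D$ contains a cusp). Pushing such an arc across the cusp does reduce $|b\cap\partial N(\tau)|$, but it converts an essential crossing of the complementary region into an \emph{illegal turn}: the new arc of $b$ enters the merged branch at the switch, turns around against the ties, and comes back around the cusp. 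For this tangency, the region bounded by the turning arc and a subarc of the switch tie contains the cusp, hence contains points of the complementary triangle; so it is \emph{not} a half-disc $H\subset N(\tau)$, and the only way to remove the tangency is to push $b$ back out across the cusp, undoing Step 1. Thus your two reductions chase each other, and minimising $|b\cap\partial N(\tau)|$ first does not help: the procedure is then simply stuck at the illegal turn. Two subsidiary claims also fail. First, such an illegal turn \emph{can} be made disjointly from $a$ (turn around inside the strip between the two strands of $a$ nearest the cusp, one entering from each merging branch), so disjointness from $a$ gives no local contradiction; what is true is only that such turns can be undone by a \emph{global} isotopy in the complement of $a$, which is essentially the statement being proved. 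Second, your argument that no strand of $a$ can pass through $H$ via $J$ assumes transversality to the ties behaves like a global height function; once $H$ spans more than one branch rectangle, a strand of $a$ can cross the tie containing $J$ many times with no tangency. The once-punctured monogon dichotomy (``intersects $a$ or is peripheral'') is likewise asserted rather than proved, and is doubtful because running over every branch does \emph{not} make $a$ filling, so the complementary component of $a$ containing the monogon need not be a once-punctured disc.

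For comparison, the proof behind the citation (Masur--Minsky, via Penner--Harer) avoids all local surgery. Since $\tau$ is maximal and birecurrent and $a$ puts positive weight on every branch, $a$ lies in the interior of $P(\tau)$, which is an open subset of $\mathcal{MF}(S)$. If $b$ is disjoint from $a$ then $i(a,b)=0$, so $a+\epsilon b$ is a measured lamination for all $\epsilon>0$ and converges to $a$ as $\epsilon\to 0$; hence $a+\epsilon b\in P(\tau)$ for small $\epsilon$. Carrying passes to the components of the support of a carried lamination, so $b\in P(\tau)$. Note that this argument uses birecurrence essentially (it is what makes the interior of $P(\tau)$ open and nonempty), whereas your argument never invokes that hypothesis at all --- a warning sign that the local moves you describe cannot by themselves capture the lemma. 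If you want a purely topological proof, the missing ingredient is a lemma showing that an essential curve lying in a complementary component of $a$ can be isotoped \emph{within that component} to a carried position; that is where the hypothesis that $a$ crosses every branch must actually be spent.
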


Finally, the boundary of the curve complex is the space of minimal
foliations.

\begin{theorem} \cite{klarreich}*{Theorem
  1.3} \label{theorem:boundary} %
The boundary at infinity of the curve complex $\C(S)$ is the space of
minimal foliations on $S$.
\end{theorem}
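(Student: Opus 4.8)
This is Klarreich's theorem; I would derive it by combining the tools assembled above with the Gromov hyperbolicity of $\C(S)$ \cite{masur-minsky}. The first step is to topologize the space $\fmin$ of minimal filling foliations by \emph{forgetting} the transverse measure: declare $\overline{F}_n \to \overline{F}$ to mean that every Hausdorff accumulation point of the underlying laminations $|F_n|$ contains $|F|$ as a sublamination. The plan is then to construct mutually inverse maps $\Phi \colon \fmin \to \partial\C(S)$ and $\Psi \colon \partial\C(S) \to \fmin$, and to check that both are continuous.

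To build $\Phi$, I would start with $\overline{F} \in \fmin$, realize $F$ as the vertical foliation of a quadratic differential, and run Rauzy induction (Proposition \ref{prop:rauzy}) to obtain a nested splitting sequence $(\tau_n)$ whose transversal lengths $F_+(t_n)$ tend to zero and with $F \in \bigcap \overline{P(\tau_n)}$. By Theorem \ref{theorem:splits} the vertex cycles $V(\tau_n)$ form an unparameterized $(Q,c)$-quasigeodesic in $\C(S)$. The first thing to verify is that this quasigeodesic leaves every bounded set: if a fixed curve $a$ stayed within bounded $\C(S)$-distance of infinitely many $V(\tau_n)$, then by Lemma \ref{lemma:nest} $a$ would be carried by infinitely many $\tau_n$, forcing the lower bound $\rho_q(a) \ge 2\epsilon / F_+(t_n)$ from the proof of Proposition \ref{prop:intersection} to blow up, which is impossible. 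Hence the ray converges to a well-defined point $\Phi(\overline{F}) \in \partial\C(S)$. Independence of the choice of quadratic differential and injectivity both follow from the same intersection estimate: two distinct minimal filling foliations have positive geometric intersection number, so their carried polytopes become disjoint under splitting and the two quasigeodesics diverge, while all splitting sequences for a fixed $F$ carry $F$ and so stay boundedly close and name the same endpoint.

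For $\Psi$, take $\xi \in \partial\C(S)$, represent it by an unparameterized quasigeodesic ray, and (by the coarse converse to Theorem \ref{theorem:splits}, or equivalently by shadowing a Teichm\"uller geodesic under the systole map) realize it as the vertex sequence of a splitting sequence $(\tau_n)$. The nested compact polytopes $\overline{P(\tau_n)}$ have nonempty intersection, and any $\overline{F}$ in it must be minimal and filling, since a non-filling or non-minimal foliation is carried in a proper subsurface or contains a closed leaf, either of which would trap the vertex sequence in a bounded region of $\C(S)$ and contradict $\xi \in \partial\C(S)$; all such $\overline{F}$ share the same support, so I may set $\Psi(\xi) := \overline{F}$. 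At this point $\Phi$ and $\Psi$ are visibly inverse to one another. The genuinely hard step, and the main obstacle, is matching the two topologies: one must show that Gromov-boundary convergence $\xi_n \to \xi$ is equivalent to coarse-Hausdorff convergence of the associated laminations. Here the deliberate forgetting of the transverse measure is essential, because a sequence of measured foliations may converge projectively in $\PMF(S)$ to a \emph{non}-minimal limit while $\C(S)$ records only the minimal filling support; thus continuity cannot simply be inherited from $\PMF(S)$. Establishing continuity of $\Psi$ is where the quantitative Masur--Minsky machinery is unavoidable, since one needs that nearby boundary points are carried by a common splitting sequence and hence have coarse-Hausdorff close supports.
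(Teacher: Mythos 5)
The paper does not prove this statement at all: it is imported verbatim from Klarreich \cite{klarreich}*{Theorem 1.3} and used as a black box, together with Theorem \ref{theorem:splits} and Lemma \ref{lemma:nest}, in the proof of Lemma \ref{lemma:maximal}. So there is no internal proof to compare against, and your sketch must be judged as a standalone proof of Klarreich's theorem --- which it is not, because the two steps you defer are precisely the substance of the theorem. First, your map $\Psi$ presupposes that every point of $\partial\C(S)$ is represented by the vertex path of a nested splitting sequence; the ``coarse converse to Theorem \ref{theorem:splits}'' that you invoke in one clause appears nowhere in the paper and is a serious result in its own right (Klarreich's original argument replaces it by an analysis of the systole map from electrified Teichm\"uller space together with Masur's criterion, and in Hamenst\"adt's later train-track proof producing such shadowing splitting sequences is the technical core). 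Second, you concede in your final sentences that the identification of the Gromov topology with the coarse-Hausdorff topology is ``the main obstacle'' and you do not address it; but the theorem asserts a homeomorphism, not a set-theoretic bijection, and the bulk of Klarreich's paper is devoted exactly to that continuity. A proof that ends by naming the main obstacle as untouched is an outline of a program, not a proof.

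There are also gaps in the portion you do carry out, because the paper's tools are proved under hypotheses you do not have. Proposition \ref{prop:rauzy} is stated only for invariant foliations of pseudo-Anosov maps, where Lemma \ref{lemma:saddle} guarantees the absence of vertical saddle connections; a general minimal filling foliation, realized via Hubbard--Masur as the vertical foliation of a quadratic differential, may well have vertical saddle connections, in which case Proposition \ref{prop:bl} does not apply and your map $\Phi$ is not yet defined on all of $\fmin$. Likewise, Proposition \ref{prop:intersection} assumes unique ergodicity: for a non-uniquely ergodic minimal foliation the nested polytopes $\overline{P(\tau_n)}$ intersect in a positive-dimensional simplex of measures, so your well-definedness and injectivity arguments for $\Phi$ (``the polytopes shrink to a point, so the quasigeodesics converge to the same endpoint'') must be rerun at the level of topological supports, which you assert in a parenthetical but do not do. Both issues are repairable with known technology, but as written they are genuine holes; if the aim is to use this statement the way the paper does, the correct move is simply to cite Klarreich.
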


We may now prove Lemma \ref{lemma:maximal}.

\begin{proof}
Choose a non-singular point in $S_q$ and construct a base rectangle
$B_0 = t^1 \cross s^1$ disjoint from the singular set, such that both
vertical and horizontal non-classical interval exchanges are
admissible.  Let $(t^n)_{n \in \N}$ and $(s^n)_{n \in \N}$ be the
train tracks arising from applying Rauzy induction to $t^1$ and $s^1$
respectively.

After applying Rauzy induction to $t^1$, there is a transversal
$t^{k_1}$ such that each band runs at least twice over every band of
the original non-classical interval exchange.  Similarly, after
applying Rauzy induction to $s^1$, there is a transversal $s^{k_2}$
and such that each band runs at least twice over every band of the
original non-classical interval exchange.  To simplify notation,
choose $k = \max \{ k_1, k_2\}$, and consider the train tracks
$\tau_k$ determined by $t^k$, and $\sigma_k$ determined by $s^k$.

By Proposition \ref{prop:intersection}, Theorem \ref{theorem:splits}
and Theorem \ref{theorem:boundary}, the vertex sequence
$( V(\tau_n) )_{n \in \N}$ is a $(Q, c)$-quasigeodesic, whose limit
point in $\partial \C(S)$ is equal to $\overline{F_+}$.  The positive
limit point of the $(1, K_1)$-quasigeodesic $\alpha_g$ is also equal
to $\overline{F_+}$, so there is a constant $L$ such that all of the
$V(\tau_n)$ are contained in an $L$-neighbourhood of any
$(1, K_1)$-quasiaxis for $g$.

Recall that the Rauzy induction moves may be realised in $S_q$ by
extending the flow lines giving the boundaries of the bands.  As $F_+$
is minimal, every leaf is dense, and so for every $\tau_k$ there is a
$k' > k$ such that every band of $\tau_{k'}$ runs at least once over
every band of $\tau_k$.  The nesting lemma, Lemma \ref{lemma:nest},
then implies that $N_1(\C(\tau_{k'})) \subseteq \C(\tau_k)$.  By
repeating this process, for any $K$ there is a $k'$ such that
$N_K( \C(\tau_{k'}) ) \subseteq \C(\tau_k)$.  In particular, for any
$K + L$ there is a $k'$ such that
$N_{K+L}( \C(\tau_{k'}) ) \subset \C(\tau_k)$ in $\C(S)$.  Let
$\alpha_+ = \alpha_g( [ r_+, \infty ) )$ be the terminal ray of
$\alpha$ such that
$\alpha_+ \subseteq N_L( ( V(\tau_n) )_{n \ge k'} )$.  Then
$N_K(\alpha_+) \subseteq N_{K+L}( \C(\tau_{k'}) ) \subseteq
\C(\tau_k)$.

We may apply exactly the same argument to the horizontal foliation, so
there is an infinite initial ray
$\alpha_- = \alpha_g( ( -\infty, r_- ] )$ the $(1, K_1)$-quasiaxis
$\alpha_g$ such that $N_K(\alpha_-) \subseteq \C(\sigma_k)$.

As $g$ acts by translations on $\alpha_g$, given any two points $v$
and $w$ on $\alpha_g$ distance at least
$R = d_\C(\alpha_-, \alpha_+ ) + d_\C(a, g a) + O(\delta)$ apart, we
may translate them by powers of $g$ such that, up to relabelling, $v$
lies in $\alpha_-$ and $w$ lies in $\alpha_+$.  Let $v'$ and $w'$ be
curves distance at most $K$ from $v$ and $w$ respectively.  Then
$v' \in \C(\sigma_k)$ and $w' \in \C(\tau_k)$, so $v'$ and $w'$ are
maximally filling, by Proposition \ref{prop:twice}.  Therefore, $v$
and $w$ are $K$-maximally filling, as required.
\end{proof}

\begin{lemma}\label{lem.max-bdd-diam}
Let $g$ be a pseudo-Anosov element with maximal invariant laminations
and axis $\alpha$.  Then there is a $D$ such that for any non-maximal
train track $\tau$, the nearest point projection of $\C(\tau)$ to
$\alpha$ has diameter at most $D$.
\end{lemma}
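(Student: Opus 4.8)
The plan is to argue by contradiction: a large projection diameter will be converted into a maximally filling pair of curves \emph{both carried by} $\tau$, and this is impossible because a non-maximal train track cannot carry a maximally filling pair. Fix the pseudo-Anosov $g$ with maximal invariant laminations and its axis $\alpha$. Let $K'$ be the uniform quasi-convexity constant for the family $\{\C(\tau)\}$ of non-maximal train track sets \cite{masur-minsky}, let $\delta$ be the hyperbolicity constant of $\C(S)$, and let $D_1 = D_1(\delta)$ be the fellow-traveling constant used below. Set $K := K' + D_1$ and feed this $K$ into Lemma \ref{lemma:maximal} to obtain a radius $R$ with the property that any two curves on $\alpha$ at distance at least $R$ are $K$-maximally filling. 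I will then show that $D := R + 2K + O(\delta)$ works.

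First I record the key structural input: if two curves $a$ and $b$ are both carried by a non-maximal train track $\tau$, then $a$ and $b$ are not maximally filling. Put $a$ and $b$ in minimal position and smooth their intersections compatibly to obtain a train track $\tau_{ab}$; as noted earlier in this section, the pair is maximally filling exactly when $\tau_{ab}$ is maximal. Since $a \cup b$ lies in a neighbourhood of $\tau$, the smoothed track $\tau_{ab}$ is carried by $\tau$, indeed by the subtrack $\sigma \subseteq \tau$ consisting of the branches run over by $a \cup b$. Removing branches only merges complementary regions, so $\sigma$ inherits a complementary region that is neither a triangle nor a once-punctured monogon; that is, $\sigma$ is non-maximal. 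Under the carrying $\tau_{ab} \prec \sigma$ each complementary region of $\tau_{ab}$ contains one of $\sigma$ and so is at least as complex. Hence $\tau_{ab}$ is non-maximal, and $a, b$ are not maximally filling.

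With the constants fixed, suppose for contradiction that some non-maximal $\tau$ satisfies $\diam_\C(\pi_\alpha(\C(\tau))) > D$. Choose $c_1, c_2 \in \C(\tau)$ whose projections to $\alpha$ are more than $D$ apart. By the standard fact recalled in the proof of Theorem \ref{thm.WPD}, the geodesic $[c_1, c_2]$ has a long subarc $\eta$ that $D_1$-fellow-travels the subsegment of $\alpha$ between $\pi_\alpha(c_1)$ and $\pi_\alpha(c_2)$; by quasi-convexity every point of $\eta$ also lies within $K'$ of $\C(\tau)$. Picking two points of $\eta$ near the two ends of this subarc therefore yields curves $a_1, a_2 \in \C(\tau)$ lying within $K = K' + D_1$ of two vertices $p_1, p_2$ on $\alpha$ whose distance is at least $R$ (this is where the slack $2K + O(\delta)$ in the choice of $D$ is used). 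By Lemma \ref{lemma:maximal} the pair $p_1, p_2$ is $K$-maximally filling, so $a_1$ and $a_2$ are maximally filling, contradicting the structural fact since $a_1, a_2 \in \C(\tau)$ and $\tau$ is non-maximal. This establishes the uniform bound and the lemma.

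The main obstacle is the structural fact of the second paragraph: one must verify carefully that carrying can only make complementary regions more complex, so that a non-maximal track, and each of its subtracks, can never carry a maximally filling pair. The hyperbolic-geometry step is routine given the fellow-traveling statement already invoked in Theorem \ref{thm.WPD} and the uniform quasi-convexity of the sets $\C(\tau)$; the bookkeeping of constants is straightforward once the order of quantifiers, namely $K$ first, then $R$, then $D$, is respected.
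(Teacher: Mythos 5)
Your proof is correct and takes essentially the same route as the paper's: uniform quasi-convexity of $\C(\tau)$ converts a large projection onto $\alpha$ into a pair of carried curves lying within a uniform distance of two widely separated points of $\alpha$, Lemma \ref{lemma:maximal} then forces that pair to be maximally filling, and this contradicts the fact that a non-maximal train track cannot carry a maximally filling pair. The only difference is one of detail: the paper asserts that last structural fact in a single sentence (curves carried by a non-maximal $\tau$ have a complementary region that is not a triangle or once-punctured monogon), whereas you expand it into the subtrack/carrying argument, which is the correct justification.
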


\begin{proof}
The set $\C(\tau)$ is uniformly quasiconvex, so if there is a
projection of size $R$, then there are curves $a_1, a_2$ in $\C(\tau)$
with $d_{\C(S)}(\alpha, a_i) \le K$ and
$d_{\C(S)}(a_1, a_2) \ge R - 2K - O(\delta)$, where $K$ is a uniform
constant independent of $\C(\tau)$.

As $a_1$ and $a_2$ are both carried by $\tau$, they have a
complementary region which is not either a triangle or a monogon with
exactly one puncture, so this contradicts the lemma above.
\end{proof}

Now by Lemma \ref{lem.max-bdd-diam} combined with Theorem
\ref{thm.WPD}, we have the following theorem which corresponds to the
statement for $\C_{\tau}(S)$ in Theorem \ref{thm.main}.
\begin{theorem}
Let $S$ be an orientable surface of genus $g$ with $n$ punctures.
Suppose $2g + n > 3$.
For the action of $\mathrm{MCG}(S)$ on $\C_{\tau}(S)$, we have
\begin{enumerate}
\item[(a)]  
$g\in\mathrm{MCG}(S)$ is loxodromic if and only if $g$ has maximal invariant laminations, and
\item[(b)] the action of $\mathrm{MCG}(S)$ on $\C_{\tau}(S)$ is strongly WPD.
\end{enumerate}
In particular, the action of $\mathrm{MCG}(S)$ on $\C_{\tau}(S)$ is non-elementary.
\end{theorem}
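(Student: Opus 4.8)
The plan is to deduce everything from Theorem~\ref{thm.WPD}, applied with $X = \C(S)$, with $Y = \{\C(\tau)\}$ ranging over all non-maximal train tracks, and with $Z = X_Y = \C_\tau(S)$. By \cite{masur-minsky} this family is uniformly quasiconvex and $\MCG(S)$-equivariant, so Theorem~\ref{prop.hyperbolic} already guarantees that $\C_\tau(S)$ is Gromov hyperbolic and that $\MCG(S)$ acts on it by isometries; it remains to identify the loxodromic elements and to upgrade the WPD property across the coning-off.

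For the ``if'' direction of (a), suppose $g$ is pseudo-Anosov with maximal invariant laminations, and let $\alpha$ be a quasi-geodesic axis for $g$ in $\C(S)$. Lemma~\ref{lem.max-bdd-diam} supplies a uniform bound $D$ on $\diam_{\C(S)}(\pi_\alpha(\C(\tau)))$ over all non-maximal $\tau$, which is exactly hypothesis~(2) of Theorem~\ref{thm.WPD}; hypothesis~(1) holds because every pseudo-Anosov acts WPD on $\C(S)$ \cite{bestvina-fujiwara}. Theorem~\ref{thm.WPD} then yields that $g$ acts WPD on $\C_\tau(S)$, and in particular loxodromically. For the ``only if'' direction, I would first observe that the inclusion-induced map $\proj\colon X\to Z$ is $1$-Lipschitz, so $d_Z\le d_X$; hence a loxodromic orbit in $Z$ grows at least linearly in $X$ as well, forcing $g$ to be pseudo-Anosov. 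If such a $g$ had non-maximal invariant laminations, its stable foliation $\overline{F_+}$ would be carried by some non-maximal train track $\tau$, so that $\overline{F_+}\in\partial\C(\tau)\subseteq\partial Y$. By Proposition~\ref{prop.boundary} the boundary point $\overline{F_+}$ then lies outside $\partial_Z X$, so any quasi-geodesic ray converging to $\overline{F_+}$ --- in particular the axis $\alpha$ --- has bounded image in $Z$, contradicting loxodromicity. This forces the invariant laminations of $g$ to be maximal.

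For (b), part (a) identifies the loxodromic elements on $\C_\tau(S)$ as precisely the pseudo-Anosov maps with maximal invariant laminations, and the ``if'' argument above shows each of these acts WPD; hence \emph{every} loxodromic element is WPD, which is strong WPD. To see that the action is WPD (and not vacuously so), I would note that the hypothesis on $S$ excludes $S_{0,4}$ and $S_{1,1}$, so $S$ supports a maximally filling pair of curves and hence, by Thurston's construction, a pseudo-Anosov with maximal invariant laminations --- a genuine loxodromic element; moreover $\MCG(S)$ is not virtually cyclic, since it contains a rank-two free group. Non-elementarity of the action is then immediate from Proposition~\ref{prop.non-elementary}.

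The main obstacle is the ``only if'' direction of (a): one must translate the dynamical hypothesis ``$g$ does not have maximal invariant laminations'' into the combinatorial statement that its invariant foliation is carried by some \emph{non-maximal} train track, so that the coned-off set $\C(\tau)$ really has $\overline{F_+}$ in its limit set. Concretely, a non-generic (higher-order) singularity of the invariant foliation forces every carrying train track to have a complementary region larger than a triangle or a once-punctured monogon, hence to be non-maximal; making this carrying statement precise, and matching it to the definitions of $\C(\tau)$ and of $\partial Y$, is the one place where the argument genuinely uses the structure of the non-maximal train track family rather than the black box of Theorem~\ref{thm.WPD}.
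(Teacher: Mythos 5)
Your route is the paper's own: feed Lemma \ref{lem.max-bdd-diam} into Theorem \ref{thm.WPD} to make every pseudo-Anosov with maximal invariant laminations WPD (hence loxodromic) on $\C_{\tau}(S)$; for the converse, note that a loxodromic element of $\C_{\tau}(S)$ is loxodromic on $\C(S)$, hence pseudo-Anosov, and that a non-maximal invariant foliation lies in $\partial \C(\tau)$ for some non-maximal $\tau$, so Proposition \ref{prop.boundary} bounds the image of the axis in $\C_{\tau}(S)$; strong WPD and non-elementarity then follow from Proposition \ref{prop.non-elementary} once one loxodromic element exists. This is the intended argument, written out in more detail than the paper itself gives.

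There is, however, a genuine gap at $S_{0,4}$. Your claim that ``the hypothesis on $S$ excludes $S_{0,4}$ and $S_{1,1}$'' is arithmetically false: the condition $2g+n>3$ excludes only $S_{1,1}$, while for $S_{0,4}$ one has $2g+n = 4 > 3$, so $S_{0,4}$ lies inside the range of the theorem. The condition excluding both exceptional surfaces is $3g+n>4$, and that is the standing hypothesis of the train-track section: Lemma \ref{lemma:maximal}, on which Lemma \ref{lem.max-bdd-diam} rests, is established only for $3g+n>4$, because $S_{0,4}$ carries no filling pair that fails to be maximally filling and the Rauzy-induction machinery is not set up for it. So your appeal to Lemma \ref{lem.max-bdd-diam} on every surface allowed by the theorem oversteps what is proved, and your argument simply omits $S_{0,4}$. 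The paper covers this case by a separate observation: on $S_{0,4}$ every pseudo-Anosov has generic, hence maximal, invariant foliations (Remark \ref{remark:stratum}), which gives (a); and every non-maximal train track on $S_{0,4}$ has curve set of finite diameter in $\C(S_{0,4})$ --- a pair of curves carried by a non-maximal track is never maximally filling, while on $S_{0,4}$ every filling pair is maximally filling, so carried pairs do not even fill --- whence condition (2) of Theorem \ref{thm.WPD} holds trivially and (b) follows. Some such argument is required to reach the stated range $2g+n>3$.

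A smaller inaccuracy: you justify the carrying statement in the ``only if'' direction by asserting that a non-generic singularity forces \emph{every} carrying train track to be non-maximal. That is false as stated, since a foliation may be carried with zero weights on some branches of a maximal track. What the argument needs, and what is true, is only that \emph{some} non-maximal track carries $F_+$: any track fully carrying $F_+$ has complementary regions matching the singularity structure of $F_+$, hence has a region that is not a triangle or a once-punctured monogon. With this correction your limit-set argument is sound.
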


Lemma \ref{lem.max-bdd-diam} holds for all surfaces with $3g + n > 4$.
However, part $(a)$ of this result also holds for $S_{0, 4}$ by Remark
\ref{remark:stratum}.  In fact, $(b)$ also holds, as the non-maximal
train tracks in $S_{0, 4}$ have finite diameter in $\C(S_{0, 4})$, and
so the result holds for all surfaces with $2g + n > 3$.  Among
surfaces suppporting pseudo-Anosov elements this excludes only
$S_{1, 1}$, for which the result does not hold.





\begin{bibdiv}
\begin{biblist}

\bib{bestvina-fujiwara}{article}{
   author={Bestvina, Mladen},
   author={Fujiwara, Koji},
   title={Bounded cohomology of subgroups of mapping class groups},
   journal={Geom. Topol.},
   volume={6},
   date={2002},
   pages={69--89 (electronic)},
   issn={1465-3060},
}

\bib{boissy-lanneau}{article}{
   author={Boissy, Corentin},
   author={Lanneau, Erwan},
   title={Dynamics and geometry of the Rauzy-Veech induction for quadratic
   differentials},
   journal={Ergodic Theory Dynam. Systems},
   volume={29},
   date={2009},
   number={3},
   pages={767--816},
   issn={0143-3857},
 }

 \bib{bowditch-rel-hyp}{article}{
   author={Bowditch, Brian H.},
   title={Relatively hyperbolic groups},
   journal={Internat. J. Algebra Comput.},
   volume={22},
   date={2012},
   number={3},
   pages={1250016, 66},
   issn={0218-1967},
 }

\bib{dowdall-taylor}{article}{
   author={Dowdall, Spencer},
   author={Taylor, Samuel J.},
   title={The co-surface graph and the geometry of hyperbolic free group
   extensions},
   journal={J. Topol.},
   volume={10},
   date={2017},
   number={2},
   pages={447--482},
   issn={1753-8416},
}

\bib{earle-kra}{article}{
   author={Earle, Clifford J.},
   author={Kra, Irwin},
   title={On isometries between Teichm\"{u}ller spaces},
   journal={Duke Math. J.},
   volume={41},
   date={1974},
   pages={583--591},
   issn={0012-7094},
}

\bib{eskin-mirzakhani}{article}{
   author={Eskin, Alex},
   author={Mirzakhani, Maryam},
   title={Counting closed geodesics in moduli space},
   journal={J. Mod. Dyn.},
   volume={5},
   date={2011},
   number={1},
   pages={71--105},
   issn={1930-5311},
}

\bib{farb-margalit}{book}{
   author={Farb, Benson},
   author={Margalit, Dan},
   title={A primer on mapping class groups},
   series={Princeton Mathematical Series},
   volume={49},
   publisher={Princeton University Press, Princeton, NJ},
   date={2012},
   pages={xiv+472},
   isbn={978-0-691-14794-9},
}

\bib{farb-mosher}{article}{
   author={Farb, Benson},
   author={Mosher, Lee},
   title={Convex cocompact subgroups of mapping class groups},
   journal={Geom. Topol.},
   volume={6},
   date={2002},
   pages={91--152},
   issn={1465-3060},
}

\bib{gadre-maher}{article}{
   author={Gadre, Vaibhav},
   author={Maher, Joseph},
   title={The stratum of random mapping classes},
   journal={Ergodic Theory Dynam. Systems},
   volume={38},
   date={2018},
   number={7},
   pages={2666--2682},
   issn={0143-3857},
}

\bib{heins}{article}{
   author={Heins, Maurice},
   title={On a class of conformal metrics},
   journal={Nagoya Math. J.},
   volume={21},
   date={1962},
   pages={1--60},
   issn={0027-7630},
}

\bib{KM}{article}{
   author={Kaimanovich, Vadim A.},
   author={Masur, Howard},
   title={The Poisson boundary of the mapping class group},
   journal={Invent. Math.},
   volume={125},
   date={1996},
   number={2},
   pages={221--264},
   issn={0020-9910},
}

 \bib{kapovich-rafi}{article}{
   author={Kapovich, Ilya},
   author={Rafi, Kasra},
   title={On hyperbolicity of free splitting and free factor complexes},
   journal={Groups Geom. Dyn.},
   volume={8},
   date={2014},
   number={2},
   pages={391--414},
   issn={1661-7207},
 }

\bib{kent-leininger}{article}{
   author={Kent, Richard P., IV},
   author={Leininger, Christopher J.},
   title={Shadows of mapping class groups: capturing convex cocompactness},
   journal={Geom. Funct. Anal.},
   volume={18},
   date={2008},
   number={4},
   pages={1270--1325},
   issn={1016-443X},
}

\bib{klarreich}{article}{
        author={Klarreich, E.},
        title={The boundary at infinity of the curve complex and the
          relative Teichm\"uller space},
        eprint={arXiv:1803.10339},
        date={1999},
}

\bib{maher-schleimer}{article}{
  author={Maher, Joseph},
  author={Schleimer, Saul},
  title={The compression body graph has infinite diameter},
  journal={Algebraic and Geometric Topology, to appear},
  date={2020},
}

\bib{MT}{article}{
   author={Maher, Joseph},
   author={Tiozzo, Giulio},
   title={Random walks on weakly hyperbolic groups},
   journal={J. Reine Angew. Math.},
   volume={742},
   date={2018},
   pages={187--239},
   issn={0075-4102},
}

\bib{Mas}{article}{
   author={Masai, Hidetoshi},
   title={Fibered commensurability and arithmeticity of random mapping tori},
   journal={Groups Geom. Dyn.},
   volume={11},
   date={2017},
   number={4},
   pages={1253--1279},
   issn={1661-7207},
}

\bib{Mas-q}{article}{
   author={Masai, Hidetoshi},
   title={On commensurability of quadratic differentials on surfaces},
   conference={
      title={Geometry and analysis of discrete groups and hyperbolic spaces},
   },
   book={
      series={RIMS K\^{o}ky\^{u}roku Bessatsu, B66},
      publisher={Res. Inst. Math. Sci. (RIMS), Kyoto},
   },
   date={2017},
   pages={15--19},
 }

\bib{masur-minsky}{article}{
   author={Masur, Howard A.},
   author={Minsky, Yair N.},
   title={Quasiconvexity in the curve complex},
   conference={
      title={In the tradition of Ahlfors and Bers, III},
   },
   book={
      series={Contemp. Math.},
      volume={355},
      publisher={Amer. Math. Soc., Providence, RI},
   },
   date={2004},
   pages={309--320},
}

\bib{Osin}{article}{
   author={Osin, D.},
   title={Acylindrically hyperbolic groups},
   journal={Trans. Amer. Math. Soc.},
   volume={368},
   date={2016},
   number={2},
   pages={851--888},
   issn={0002-9947},
}

\bib{picard}{book}{
  author={Picard, \'{E}mile},
  title={Trait\'{e} d'analyse. Tome III},
  series={Les Grands
    Classiques Gauthier-Villars.},
  publisher={\'{E}ditions Jacques Gabay, Sceaux},
  date={1991},
  pages={ix+661},
  isbn={2-87647-075-6},
}

\bib{penner-harer}{book}{
   author={Penner, R. C.},
   author={Harer, J. L.},
   title={Combinatorics of train tracks},
   series={Annals of Mathematics Studies},
   volume={125},
   publisher={Princeton University Press},
   place={Princeton, NJ},
   date={1992},
   pages={xii+216},
   isbn={0-691-08764-4},
   isbn={0-691-02531-2},
}

\bib{RS}{article}{
   author={Rafi, Kasra},
   author={Schleimer, Saul},
   title={Covers and the curve complex},
   journal={Geom. Topol.},
   volume={13},
   date={2009},
   number={4},
   pages={2141--2162},
   issn={1465-3060},
}

\bib{royden}{article}{
   author={Royden, H. L.},
   title={Automorphisms and isometries of Teichm\"{u}ller space},
   conference={
      title={Advances in the Theory of Riemann Surfaces},
      address={Proc. Conf., Stony Brook, N.Y.},
      date={1969},
   },
   book={
      publisher={Ann. of Math. Studies, No. 66. Princeton Univ. Press,
   Princeton, N.J. },
   },
   date={1971},
   pages={369--383},
}

\bib{TT}{article}{
author={Taylor, Samuel J.}, 
author={Tiozzo, Giulio}, 
title={Random free and surface group extensions are hyperbolic}, 
journal={Int. Math. Res. Not.}, 
volume={2016}, 
number={1},
pages={294--310},
date={2016}
}

\end{biblist}
\end{bibdiv}

\vskip 20pt

\noindent Joseph Maher \\
CUNY College of Staten Island and CUNY Graduate Center \\
\url{joseph.maher@csi.cuny.edu} \\

\noindent Hidetoshi Masai \\
Tokyo Institute of Technology\\
\url{masai@math.titech.ac.jp}\\

\noindent Saul Schleimer \\
University of Warwick \\
\url{s.schleimer@warwick.ac.uk }


\end{document}